\documentclass{article}
\usepackage{amsfonts}
\usepackage{amsmath}
\usepackage{amssymb}
\usepackage{amsthm}
\usepackage{fullpage}
\usepackage{cite}

\newtheorem{theorem}{Theorem}

\newtheorem{lemma}[theorem]{Lemma}
\newtheorem{proposition}[theorem]{Proposition}

\DeclareMathOperator{\sgn}  {sgn} \DeclareMathOperator{\supp}{supp}
\DeclareMathOperator{\curl}{curl} \DeclareMathOperator{\diver}{div}
\numberwithin{equation}{section} \numberwithin{theorem}{section}
\allowdisplaybreaks[1]

\begin{document}

\title{Global regularity and convergence of a Birkhoff-Rott-$\alpha $
approximation of the dynamics of vortex sheets of the 2D Euler equations}
\author{Claude Bardos$^{1}$, Jasmine S.~Linshiz$^{2a}$ and Edriss S.~Titi$
^{2,3}$}
\date{} 
\maketitle

\begin{center}
$^{1}$\textit{Universit\'{e} Denis Diderot and Laboratory J.~L.~Lions\\[0pt]
Universit\'{e} Pierre et Marie Curie, Paris, France} \\[0pt]
bardos@ann.jussieu.fr \\[0pt]
$^{2}$\textit{Department of Computer Science and Applied Mathematics \\[0pt]
Weizmann Institute of Science \\[0pt]
Rehovot 76100, Israel}\\[0pt]
$^{a}$jasmine.tal@weizmann.ac.il \\[0pt]
$^{3}$\textit{Department of Mathematics and\\[0pt]
Department of Mechanical and Aerospace Engineering \\[0pt]
University of California \\[0pt]
Irvine, CA 92697-3875, USA} \\[0pt]
etiti@math.uci.edu \textit{and} edriss.titi@weizmann.ac.il
\end{center}

\noindent \textbf{Keywords:} inviscid regularization of Euler equations;
Euler-$\alpha$; Birkhoff-Rott; Birkhoff-Rott-$\alpha$; vortex sheet.

\noindent \textbf{Mathematics Subject Classification:} 76B03, 35Q35, 76B47.

\begin{abstract}
We present an $\alpha $-regularization of the Birkhoff-Rott
equation, induced by the two-dimensional Euler-$\alpha$ equations,
for the vortex sheet dynamics. We show the convergence of the
solutions of Euler-$\alpha$ equations to a weak solution of the
Euler equations for initial vorticity being a finite Radon measure
of fixed sign, which includes the vortex sheets case. We also show
that, provided the initial density of vorticity is an integrable
function over the curve with respect to the arc-length measure, (i)
an initially Lipschitz chord arc vortex sheet (curve), evolving
under the BR-$\alpha $ equation, remains Lipschitz for all times,
(ii) an initially H\"{o}lder $C^{1,\beta }$, $0\leq \beta <1$, chord
arc curve
remains in $C^{1,\beta }$ for all times, and finally, (iii) an initially H%
\"{o}lder $C^{n,\beta },$ $n\geq 1,$ $0<\beta <1 $, closed chord arc
curve remains so for all times. In all these cases the weak
Euler-$\alpha $ and the BR-$\alpha $ descriptions of the vortex
sheet motion are equivalent.
\end{abstract}

\section{\label{sec:intro}Introduction}

The $\alpha $-regularization of the Navier-Stokes equations (NSE) is one of
the novel approaches for subgrid scale modeling of turbulence. The inviscid
Euler-$\alpha $ model was originally introduced in the Euler-Poincar\'{e}
variational framework in \cite{a_HMR98a,a_HMR98b}. In \cite%
{a_CFHOTW98,a_CFHOTW99, a_CFHOTW99_ChanPipe,a_FHT01,a_FHT02} the
corresponding Navier-Stokes-$\alpha $ (NS-$\alpha $) [also known as the
viscous Camassa-Holm equations or the Lagrangian-averaged Navier-Stokes-$%
\alpha $ (LANS-$\alpha $)] model, was obtained by introducing the
appropriate viscous term into the Euler-$\alpha $ equations. The extensive
research of the $\alpha $-models (see, e.g., \cite%
{a_GKT08,a_CHMZ99,a_HT05,a_CHOT05,a_ILT05,a_VTC05,a_CTV05,a_FHT02,a_FHT01,a_CFHOTW99,a_CFHOTW99_ChanPipe, a_CFHOTW98,a_MKSM03,a_LL06,a_LL03,b_BIL06,a_L06,a_GKT08,a_GH03,a_GH06,a_CLT06,a_BFR80,a_HN03,a_CHT04,a_CFR79,a_LT07}%
) stems, on the one hand, from the successful comparison of their steady
state solutions to empirical data, for a large range of huge Reynolds
numbers, for turbulent flows in infinite channels and pipes \cite%
{a_CFHOTW99,a_CFHOTW99_ChanPipe, a_CFHOTW98}. On the other hand, the $\alpha
$-models can also be viewed as numerical regularizations of the original,
Euler or Navier-Stokes, systems \cite{a_LT07,a_CLT06,b_BIL06,a_KT07}. The
main practical question arising is that of the applicability of these
regularizations to the correct predictions of the underlying flow phenomena.

In this paper we present some analytical results concerning the $\alpha $%
-regularization of the two-dimensional (2D) Euler equations in the context
of vortex sheet dynamics. The incompressible Euler equations are
\begin{equation}
\begin{split}
& \frac{\partial v}{\partial t}+(v\cdot \nabla )v+\nabla p=0, \\
& \nabla \cdot v=0, \\
& v(x,0)=v^{in}(x),
\end{split}
\label{grp:EulerEq}
\end{equation}
where $v$ the fluid velocity field and $p$, the pressure are the unknowns,
and $v^{in}$ is the given initial velocity. A vortex sheet is a surface of
codimension one (a curve in the plane) in inviscid incompressible flow,
across which the tangential component of the velocity has a jump
discontinuity, while the normal component is continuous. The flow outside
the sheet is irrotational. The evolution of the vortex sheet can be
described by the Birkhoff-Rott (BR) equation \cite{a_B62,a_R56,b_S92}. This
is a nonlinear singular integro-differential equation, which can be obtained
formally from the Euler equations assuming that the evolution of a vortex
sheet retains a curve-like structure:
\begin{equation*}
\frac{\partial \bar{z}}{\partial t}\left( \Gamma ,t\right) =\frac{1}{2\pi i}%
\ \mathrm{\ p.v.}\int_{-\infty }^{\infty }\frac{d\Gamma ^{\prime }}{z\left(
\Gamma ,t\right) -z\left( \Gamma ^{\prime },t\right) },
\end{equation*}%
here $z=x+iy$ is the complex position of the sheet and $\Gamma \in
(-\infty,\infty) $ represents the circulation, that is, $\gamma
=1/|z_{\Gamma }|$ is the vorticity density along the sheet. However,
the initial data problem for the BR equation is ill-posed due to the
Kelvin-Helmholtz instability \cite{a_B62, a_SB79}. Numerous results
show that an initially real analytic vortex sheet (curve) can
develop a finite time singularity in its curvature. This singularity
formation was studied with asymptotic
techniques in \cite{a_M79,a_CBT00} and numerically in \cite%
{a_MBO82,a_K86a,a_CBT00}. Specific examples of solutions were
constructed in \cite{a_DR88,a_CO89}, where the development, in a
finite time, of curvature singularity from initially analytic data
was rigorously proved. After the appearance of the first singularity
the solution becomes very irregular. This is a consequence of the
elliptic nature of the Birkhoff-Rott equations: if solutions have a
certain minimal regularity, then they are actually analytic
\cite{a_W02,a_W06,a_L02}. An open problem is the determination of
this threshold of regularity that will imply analyticity. It was
shown in \cite{a_L02} that any solution consisting of a closed chord
arc vortex sheet that near a point belongs to $C^{1,\beta }$, $\beta
>0$ must be analytic. The conclusion is maintained if the vortex
sheet is required to be a Lipschitz chord arc curve
\cite{a_W02,a_W06}.

The problem of the evolution of a vortex sheet can also be
approached, in the general framework of weak solutions (in the
distributional sense) of the Euler equations, as a problem of
evolution of the vorticity, which is concentrated as a measure along
a surface of codimension one. This approach was pioneered by DiPerna
and Majda in \cite{a_DM87,a_DM87b,a_DM88}. The general problem of
existence for mixed-sign vortex sheet initial data remains an open
question. However, in 1991, Delort \cite{a_D91} proved a global in
time existence of weak solutions of the 2D incompressible Euler
equations for the vortex sheet initial data with initial vorticity
being a Radon measure of a distinguished sign, see also
\cite{a_EM94,a_M93,a_LX95, a_S95, a_S96, b_MB02}. This result was
later obtained as an inviscid limit of the Navier-Stokes
regularizations of the Euler equations \cite{a_M93, a_S95}, and as a
limit of numerical vortex methods \cite{a_LX95, a_S96,a_LX00}. The
Delort's result \cite{a_D91} was also extended to the case of
mirror-symmetric flows with distinguished sign vorticity on each
side of the mirror \cite{a_LfNlX01}. It is worth mentioning that
uniqueness of solutions of the 2D Euler equations was obtained by
Yudovich \cite{a_Y63} for initially bounded vorticity, see, also,
\cite{a_V99} for an improvement
with vorticity in a class slightly larger than $L^{\infty }$, and \cite%
{a_T04} for review of relevant two-dimensional results. This does not
include vortex sheets, which admit measure-valued vorticity. There is also a
non-uniqueness result for velocity in $C\left( \left( 0,T\right) ,L_{\mathrm{%
weak}}^{2}\right) $ \cite{a_LS07, a_S93, a_S97}. However, the problem of
uniqueness of a weak solution with a fixed sign vortex sheet initial data is
still unanswered, numerical evidences of non-uniqueness can be found, e.g.,
in \cite{a_P89, a_FLLZ06}. Furthermore, the structure of weak solutions
given by Delort's theorem is not known, while the Birkhoff-Rott equations
assume \textit{a priori} that a vortex sheet remains a curve at a later
time. A proposed criterion for the equivalence of a weak solution of the 2D
Euler equations with vorticity being a Radon measure supported on a curve,
and a weak solution of the Birkhoff-Rott equation can be found in \cite%
{a_FLS06}. Also, another definition of weak solutions of Birkhoff-Rott
equation has been proposed in \cite{a_W02,a_W06}. For a recent survey of the
subject, see \cite{a_BT07}.

The Euler-$\alpha $ model \cite%
{a_CFHOTW99_ChanPipe,a_HMR98a,a_HMR98b,a_H02_pA,a_MS03,a_C01} is an inviscid
regularization of the Euler equations \eqref{grp:EulerEq} given by
\begin{equation}
\begin{split}
& \frac{\partial v}{\partial t}+\left( {u}\cdot \nabla \right) {v}+\sum_{j}{v%
}_{j}\nabla {u}_{j}+\nabla \pi =0, \\
& v=\left( 1-\alpha ^{2}\Delta \right) u, \\
& \nabla \cdot u=\nabla \cdot v=0, \\
& u(x,0)=u^{in}(x).
\end{split}
\label{grp:EulerAlphaEq}
\end{equation}
Here $u$ represents the \textquotedblleft filtered" fluid velocity vector, $%
\pi $ is the \textquotedblleft filtered" pressure, and $\alpha >0$ is a
regularization lengthscale parameter representing the width of the filter.

The question of global existence of weak solutions for the three-dimensional
Euler-$\alpha $ equations is still an open problem. On the other hand, the
2D Euler-$\alpha $ equations were studied in \cite{a_OS01}, where it has
been shown that there exists a unique global weak solution to the Euler-$%
\alpha $ equations with initial vorticity in the space of Radon measures on $%
{\mathbb{R}}^{2}$, with a unique Lagrangian flow map describing the
evolution of particles. In particular, it follows that the vorticity,
initially supported on a curve, remains supported on a curve for all times.

In this paper we relate the weak solutions of Euler-$\alpha $
equations with distinguished sign vortex sheet initial data to those
of the 2D Euler equations, by proving their convergence, as the
length scale $\alpha
\rightarrow 0$. This produces a variant of the result of Delort \cite{a_D91}%
, by obtaining a weak solution of Euler equations as a limit of an
inviscid regularization of Euler equations, in addition to
approximations obtained by smoothing the initial data, viscous
regularization, or numerical vortex methods \cite{a_D91,a_M93,
a_S95, b_MB02,a_LX95, a_S96,a_LX00}. Since a weak solution of Euler
equations with vortex sheet is unlikely to be unique, a different
regularization could produce a different weak solution.

We also present an analytical study of the $\alpha $-analogue of the
Birkhoff-Rott equation, the Birkhoff-Rott-$\alpha $ (BR-$\alpha $)
model, which is induced by the 2D Euler-$\alpha $ equations. The
BR-$\alpha $ results that were reported in a short communication
\cite{a_BLT08} are presented here with full details. The BR-$\alpha
$ model was implemented computationally in \cite{a_HNP06}, where a
numerical comparison between the BR-$\alpha $ regularization and the
existing regularizing methods, such as a vortex blob model
\cite{a_CB73, a_K86b, a_CK00,a_LX95,a_BP06}, has been performed. In
the BR-$\alpha $ case the singular kernel of the Biot-Savart law
determining the velocity in terms of the vorticity is smoothed by a
convolution with a smoothing function $G^{\alpha }\left( x\right) =\frac{1}{%
\alpha ^{2}}\frac{1}{2\pi }K_{0}\left( \frac{\left\vert x\right\vert }{%
\alpha }\right) $, which is the Green function associated with the Helmholtz
operator $\left( I-\alpha ^{2}\Delta \right) $. The function $K_{0}$ is a
modified Bessel function of the second kind of order zero. This is similar
to vortex blob methods, however, unlike the standard vortex blob methods
\cite{a_CB73,a_BM85, a_K86b,a_K87, a_CK00,a_BP06} (and, in particular, the
proof of convergence of vortex blobs methods to a weak solution of 2D Euler
equations \cite{a_LX95}), the BR-alpha smoothing function $G^{\alpha }$ is
unbounded at the origin. Also, unlike the vortex blob methods that
regularize the singular Biot-Savart kernel, the Euler-$\alpha $ model
regularizes the Euler equations themselves to obtain a smoother kernel.

Section~\ref{sec:Euler_alpha} contains the preliminaries about the
2D Euler-$ \alpha $ equations. In Section~\ref{sec:Convergence} we
investigate the convergence of solutions of the Euler-$\alpha $
equations for vortex sheet initial data to those of the 2D Euler
equations, as the regularization length scale $\alpha $ tends to
zero. Specifically, we prove that for the vortex sheet initial data
with initial vorticity of a distinguished sign Radon measure one can
extract subsequences of weak solutions of the Euler-$ \alpha $
equations which converge weak-$\ast $ in $L^{\infty }\left( \left[
0,T\right] ;\mathcal{M}({\mathbb{R}}^{2})\right) $, as $\alpha
\rightarrow 0$,
to a weak solution of the 2D Euler equations. The space $\mathcal{M}({%
\mathbb{R}}^{2})$ denotes the space of finite Radon measures on ${\mathbb{R}}^{2}$.

In Section~\ref{sec:BR_alpha} we describe the BR-$\alpha $ equation. Section~%
\ref{sec:LinStab} studies the linear stability of a flat vortex sheet with
uniform vorticity density for the 2D BR-$\alpha $ model. The linear
stability analysis shows that the BR-$\alpha $ regularization controls the
growth of high wave number perturbations, which is the reason for the
well-posedness. This is unlike the case for the original BR problem for
Euler equations that exhibits the Kelvin-Helmholtz instability, the main
mechanism for its ill-posedness. In Section~\ref{sec:GlobalReg} we show
global well-posedness of the 2D BR-$\alpha $ model in the space of Lipschitz
functions and in the H\"{o}lder space $C^{n,\beta }$, $n\geq 1$, which is
the space of $n$-times differentiable functions with H\"{o}lder continuous $%
n^{\text{th}}$ derivative. Specifically, we show that (i) an initially
Lipschitz chord arc vortex sheet (curve), evolving under the BR-$\alpha $
equation, remains Lipschitz for all times, (ii) an initially H\"{o}lder $%
C^{1,\beta }$, $0\leq \beta <1$, chord arc curve remains in $C^{1,\beta }$
for all times, and finally, (iii) an initially H\"{o}lder $C^{n,\beta },$ $%
n\geq 1,$ $0<\beta <1$, closed chord arc curve remains in $C^{n,\beta }$ for
all times. Notice that for $n>1$ we request $\beta $ to be strictly larger
than zero and the curve to be closed. In all these cases the weak Euler-$%
\alpha $ and the BR-$\alpha $ descriptions of the vortex sheet motion are
equivalent. The convergence of BR-$\alpha $ solutions to the solutions of
the original BR system on the short interval of existence of solutions will
be reported in a forthcoming paper.

\section{\label{sec:Euler_alpha}Euler-$\protect\alpha $ equations}

In two dimensions, the incompressible Euler equations in the vorticity form
are obtained by taking the curl of \eqref{grp:EulerEq} and are given by
\begin{equation}
\begin{split}
& \frac{\partial q}{\partial t}+\left( v\cdot \nabla \right) q=0, \\
& v=K\ast q, \\
& q(x,0)=q^{in}(x),
\end{split}
\label{grp:EulerEqVortForm}
\end{equation}%
where $K\left( x\right) =\frac{1}{2\pi }\nabla ^{\perp }\log \left\vert
x\right\vert $, $v$ is the fluid velocity field, \mbox{$q=\curl v$} is the
vorticity, and $q^{in}$ is the given initial vorticity. Delort \cite{a_D91}
proved a global in time existence of weak solutions of the 2D Euler
equations for the vortex sheet initial data with fixed sign initial
vorticity in $\mathcal{M}({\mathbb{R}}^{2})\cap H_{loc}^{-1}\left( \mathbb{R}%
^{2}\right) $. The space $\mathcal{M}({\mathbb{R}}^{2})$ is the space of
finite Radon measures on ${\mathbb{R}}^{2}$ with the norm
\begin{equation*}
\left\Vert \mu \right\Vert _{\mathcal{M}}=\sup \left\{ \,\left\vert \int_{%
\mathbb{R}^{2}}\varphi d\mu \right\vert :\varphi \in C_{0}\left( \mathbb{R}%
^{2}\right) ,\left\Vert \varphi \right\Vert _{L^{\infty }}\leq 1\right\} ,
\end{equation*}%
$\mathcal{C}_{0}({\mathbb{R}}^{2})$ is the space of continuous functions
vanishing at infinity. The space $H^{-s}$ denotes the dual of the Sobolev
space $H^{s}$. The localized Sobolev space $H_{loc}^{s}\left( \mathbb{R}%
^{2}\right) $, $s\in \mathbb{R}$ is the set of all distributions $f$ such
that $\rho f\in H^{s}(\mathbb{R}^{2})$ for any $\rho \in C_{c}^{\infty }(%
\mathbb{R}^{2})$, see, e.g., \cite{b_F99}.

A vorticity $q\in L^{\infty }\left( \left[ 0,T\right] ,\mathcal{M}({\mathbb{R%
}}^{2})\cap H_{loc}^{-1}\left( \mathbb{R}^{2}\right) \right) \cap Lip\left( %
\left[ 0,T\right] ,H_{loc}^{-L}\left( \mathbb{R}^{2}\right) \right) $, $L>1$%
, is called a weak solution of \eqref{grp:EulerEqVortForm}, if for every
test function $\psi \in $ $C_{c}^{\infty }\left( \mathbb{R}^{2}\times \left(
0,T\right) \right) $
\begin{equation}
W\left( q;\psi \right) \equiv \int_{0}^{T}\int_{\mathbb{R}^{2}}\partial
_{t}\psi \left( x,t\right) dq\left( x,t\right) dt+\int_{0}^{T}\int_{\mathbb{R%
}^{2}}\int_{\mathbb{R}^{2}}H_{\psi }\left( x,y,t\right) dq\left( y,t\right)
dq\left( x,t\right) dt=0,  \label{eq:EulerWeakVortForm}
\end{equation}%
where
\begin{equation}
H_{\psi }\left( x,y,t\right) =\frac{1}{4\pi }\frac{\left( x-y\right) ^{\perp
}\cdot \left( \nabla \psi \left( x,t\right) -\nabla \psi \left( y,t\right)
\right) }{\left\vert x-y\right\vert ^{2}}.  \label{eq:kernel_H}
\end{equation}%
The initial value is $q(x,0)=q^{in}(x)$ and it makes sense since $q\in
Lip\left( \left[ 0,T\right] ,H_{loc}^{-L}\left( \mathbb{R}^{2}\right)
\right) $. The kernel $H_{\psi }$ is bounded, continuous outside the
diagonal $x=y$ and vanishes at infinity. This weak vorticity formulation is
well-defined, since the $H^{-1}$ vorticity has no discrete part (i.e., $%
q\left( \left\{ x_{0}\right\} ,t\right) =0$ for all $x_{0}\in \mathbb{R}^{2}$%
), which implies that the diagonal $x=y$ has $q\left( x,t\right) q\left(
y,t\right) $-measure zero, see \cite{a_S95, a_D91}. Thorough discussions of
Delort's theorem, its extension and different proofs of the result can be
found in \cite{a_D91, b_MB02, b_C98, a_EM94, a_M93, a_LX95, a_S95, a_S96}.

Taking the curl of \eqref{grp:EulerAlphaEq} yields the vorticity formulation
of the 2D Euler-$\alpha $ model
\begin{equation}
\begin{split}
& \frac{\partial q}{\partial t}+\left( u\cdot \nabla \right) q=0, \\
& u=K^{\alpha }\ast q, \\
& q(x,0)=q^{in}(x).
\end{split}
\label{grp:Euler_alpha_vortForm}
\end{equation}%
Here $u$ represents the \textquotedblleft filtered" fluid velocity, and $%
\alpha >0$ is a regularization length scale parameter, which represents the
width of the filter. At the limit $\alpha =0$, we formally obtain the Euler
equations \eqref{grp:EulerEqVortForm}. The smoothed kernel is $K^{\alpha
}=G^{\alpha }\ast K$, where $G^{\alpha }$ is the Green function associated
with the Helmholtz operator $\left( I-\alpha ^{2}\Delta \right) $, given by
\begin{equation}
G^{\alpha }\left( x\right) =\frac{1}{\alpha ^{2}}G\left( \frac{x}{\alpha }%
\right) =\frac{1}{\alpha ^{2}}\frac{1}{2\pi }K_{0}\left( \frac{\left\vert
x\right\vert }{\alpha }\right) ,  \label{eq:GreenFunc_Helmholtz_2D}
\end{equation}%
here $x=\left( x_{1},x_{2}\right) \in \mathbb{R}^{2}$ and $K_{0}$ is a
modified Bessel function of the second kind of order zero \cite{b_W44}. To
see this relationship in $\mathbb{R}^{2}$ one can take a Fourier transform
of $v=\left( 1-\alpha ^{2}\Delta \right) u$, and obtain $G^{\alpha }$ as the
inverse Fourier transform of $\frac{1}{(1+\alpha ^{2}\left\vert k\right\vert
^{2})}$. Notice that
\begin{equation}
K^{\alpha }\left( x\right) =\nabla ^{\perp }\Psi ^{\alpha }\left( \left\vert
x\right\vert \right) =\frac{x^{\perp }}{\left\vert x\right\vert }D\Psi
^{\alpha }\left( \left\vert x\right\vert \right) ,  \label{eq:K_alpha}
\end{equation}%
where
\begin{align}
& \Psi ^{\alpha }\left( r\right) =\frac{1}{2\pi }\left[ K_{0}\left( \frac{r}{%
\alpha }\right) +\log r\right] ,  \label{eq:Psi_DPsi_def} \\
& D\Psi ^{\alpha }(r)=\frac{d\Psi ^{\alpha }}{dr}(r)=\frac{1}{2\pi }\left[ -%
\frac{1}{\alpha }K_{1}\left( \frac{r}{\alpha }\right) +\frac{1}{r}\right] ,
\notag
\end{align}%
and $K_{1}$ denotes a modified Bessel functions of the second kind of order
one. For details on Bessel functions, see, e.g., \cite{b_W44}.

A weak solution of \eqref{grp:Euler_alpha_vortForm} is $q\in C\left( \left[
0,T\right] ;\mathcal{M}({\mathbb{R}}^{2})\right) $ satisfying
\begin{equation}
W^{\alpha }\left( q;\psi \right) \equiv \int_{0}^{T}\int_{\mathbb{R}%
^{2}}\partial _{t}\psi \left( x,t\right) dq\left( x,t\right)
dt+\int_{0}^{T}\int_{\mathbb{R}^{2}}\int_{\mathbb{R}^{2}}H_{\psi }^{\alpha
}\left( x,y,t\right) dq\left( x,t\right) dq\left( y,t\right) dt=0,
\label{eq:EulerAlphaWeakVortForm}
\end{equation}%
for all test functions $\psi \in C_{c}^{\infty }\left( \mathbb{R}^{2}\times
\left( 0,T\right) \right) $. The initial value is $q(x,0)=q^{in}(x)$ and it
makes sense since $q\in C\left( \left[ 0,T\right] ;\mathcal{M}({\mathbb{R}}%
^{2})\right) $. The kernel $H_{\psi }^{\alpha }$ is a continuous vanishing
at infinity function given by
\begin{equation}
H_{\psi }^{\alpha }\left( x,y,t\right) =\frac{1}{2}D\Psi ^{\alpha }\left(
\left\vert x-y\right\vert \right) \frac{\left( x-y\right) ^{\perp }\cdot
\left( \nabla \psi \left( x,t\right) -\nabla \psi \left( y,t\right) \right)
}{\left\vert x-y\right\vert }.  \label{eq:kernel_H_alpha}
\end{equation}

Oliver and Shkoller \cite{a_OS01} showed global well-posedness of the Euler-$%
\alpha $ equations with initial vorticity in $\mathcal{M}({\mathbb{R}}^{2})$.

\begin{theorem}
\label{thm:OS01}\emph{(Oliver and Shkoller \cite{a_OS01})} For initial data $%
q^{in}\in \mathcal{M}({\mathbb{R}}^{2})$, there exists a unique global weak
solution of Euler-$\alpha $ equations \eqref{grp:Euler_alpha_vortForm} in
the sense of \eqref{eq:EulerAlphaWeakVortForm}. \newline
Let $\mathcal{G}$ denote the group of all homeomorphism of ${\mathbb{R}}^{2}$%
, which preserve the Lebesgue measure and let $\eta _{\alpha }=\eta _{\alpha
}(\cdot ,t)$ denote the Lagrangian flow map induced by %
\eqref{grp:Euler_alpha_vortForm}, i.e., which obeys the equation\newline
$\partial _{t}\eta _{\alpha }(x,t)=u\left( \eta _{\alpha }(x,t),t\right)
:=\int_{ \mathbb{R}^{2}}K^{\alpha }\left( \eta _{\alpha }(x,t),\eta _{\alpha
}(y,t)\right) dq^{in}\left( y,t\right) $, \thinspace $\eta _{\alpha }\left(
x,0\right) =x$. Then the unique Lagrangian flow map $\eta _{\alpha }\in
C^{1}\left( \left[ 0,T\right] ;\mathcal{G}\right) $ exists globally and the
vorticity $q_{\alpha }$ is transported by the flow, i.e., $q_{\alpha }\left(
x,t\right) =q^{in}\circ \eta _{\alpha }^{-1}\left( x,t\right) $. \newline
\end{theorem}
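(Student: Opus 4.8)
The plan is to exploit the fact that the smoothed kernel $K^{\alpha }=G^{\alpha }\ast K$ is substantially more regular than the Biot--Savart kernel $K$, and to construct the solution through its Lagrangian flow. \textbf{Step 1 (regularity of $K^{\alpha }$).} From the small-argument expansions of the modified Bessel functions, $K_{0}(s)=-\log (s/2)-\gamma +O(s^{2}\log s)$ and $K_{1}(s)=\frac{1}{s}+O(s\log s)$ as $s\to 0$, one sees that the $1/r$ singularity of $K$ is exactly cancelled in $D\Psi ^{\alpha }(r)=\frac{1}{2\pi }( \frac{1}{r}-\frac{1}{\alpha }K_{1}(r/\alpha )) $: indeed $D\Psi ^{\alpha }(r)=O(r|\log r|)\to 0$ as $r\to 0$, while $D\Psi ^{\alpha }$ is continuous on $(0,\infty )$ with limit $0$ at infinity, hence bounded. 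Therefore $K^{\alpha }$ is a bounded, continuous, divergence-free (it equals $\nabla ^{\perp }\Psi ^{\alpha }$) vector field on ${\mathbb{R}}^{2}$, smooth off the origin, odd, and with first derivatives that blow up only logarithmically at the origin; consequently $K^{\alpha }$ is \emph{log-Lipschitz}, with some modulus $\omega (s)=Cs(1+|\log s|)$.

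\textbf{Step 2 (construction via the Lagrangian flow; global existence).} I would close the system on the flow map: solve the ODE $\partial _{t}\eta _{\alpha }(x,t)=u(\eta _{\alpha }(x,t),t)$, $\eta _{\alpha }(x,0)=x$, where $u(x,t):=\int _{{\mathbb{R}}^{2}}K^{\alpha }(x-\eta _{\alpha }(y,t))\,dq^{in}(y)$ --- a closed fixed-point problem for $\eta _{\alpha }$ alone. By Step 1, $\Vert u(\cdot ,t)\Vert _{L^{\infty }}\leq \Vert K^{\alpha }\Vert _{L^{\infty }}\Vert q^{in}\Vert _{\mathcal{M}}$ and $|u(x,t)-u(x',t)|\leq \Vert q^{in}\Vert _{\mathcal{M}}\,\omega (|x-x'|)$, both bounds holding for all $t$ and all along the construction. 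Local existence and uniqueness for this ODE I would obtain either by a direct Osgood-controlled iteration (for the displacement $\eta _{\alpha }(\cdot ,t)-\mathrm{id}$ in the sup-norm), or by mollifying $K^{\alpha }$ to a Lipschitz kernel $K^{\alpha ,\varepsilon }$, solving the resulting Lipschitz problem by Picard iteration, deriving $\varepsilon $-uniform bounds (uniform boundedness, a uniform spatial H\"{o}lder/Osgood modulus for $\eta _{\alpha }^{\varepsilon }(\cdot ,t)$, and $C^{1}$-in-$t$ control), and passing to the limit by compactness (Arzel\`{a}--Ascoli) using $K^{\alpha ,\varepsilon }\to K^{\alpha }$ locally uniformly. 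Since these bounds are time-independent, the solution extends to $[0,\infty )$. As $u$ is divergence-free, $\eta _{\alpha }(\cdot ,t)$ preserves Lebesgue measure; reversing time (the field $\tilde{u}(x,s):=-u(x,t-s)$ is again log-Lipschitz) produces a continuous inverse, so $\eta _{\alpha }(\cdot ,t)\in \mathcal{G}$ and $\eta _{\alpha }\in C^{1}([0,T];\mathcal{G})$ (the time regularity being immediate from $\partial _{t}\eta _{\alpha }=u(\eta _{\alpha },\cdot )$, $u$ jointly continuous). Finally put $q_{\alpha }(\cdot ,t):=q^{in}\circ \eta _{\alpha }^{-1}(\cdot ,t)$, the push-forward of $q^{in}$ by $\eta _{\alpha }(\cdot ,t)$; then $q_{\alpha }\in C([0,T];\mathcal{M})$ with $\Vert q_{\alpha }(t)\Vert _{\mathcal{M}}\leq \Vert q^{in}\Vert _{\mathcal{M}}$, $u(\cdot ,t)=K^{\alpha }\ast q_{\alpha }(t)$, and $q_{\alpha }(x,t)=q^{in}\circ \eta _{\alpha }^{-1}(x,t)$ is the asserted transported vorticity.

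\textbf{Step 3 (equivalence with \eqref{eq:EulerAlphaWeakVortForm} and uniqueness).} Using $K^{\alpha }(x-y)=\frac{(x-y)^{\perp }}{|x-y|}D\Psi ^{\alpha }(|x-y|)$, the kernel in \eqref{eq:kernel_H_alpha} is $H_{\psi }^{\alpha }(x,y,t)=\frac{1}{2}K^{\alpha }(x-y)\cdot (\nabla \psi (x,t)-\nabla \psi (y,t))$; by oddness of $K^{\alpha }$ and relabelling $x\leftrightarrow y$ one gets $\int \int H_{\psi }^{\alpha }\,dq\,dq=\int u(x,t)\cdot \nabla \psi (x,t)\,dq(x,t)$ with $u=K^{\alpha }\ast q$, so \eqref{eq:EulerAlphaWeakVortForm} is exactly the distributional form of the continuity equation $\partial _{t}q+\nabla \cdot (uq)=0$. (No atomlessness hypothesis on $q$ is needed: $H_{\psi }^{\alpha }$ is genuinely continuous up to the diagonal, because $D\Psi ^{\alpha }(0)=0$.) That the $q_{\alpha }$ of Step 2 satisfies this identity follows from the change of variables $x\mapsto \eta _{\alpha }(x,t)$ together with $\frac{d}{dt}\psi (\eta _{\alpha }(x,t),t)=(\partial _{t}\psi +u\cdot \nabla \psi )(\eta _{\alpha }(x,t),t)$, integrated against $dq^{in}(x)\,dt$. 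Conversely, given any weak solution $q\in C([0,T];\mathcal{M})$, its velocity $u=K^{\alpha }\ast q$ is bounded and log-Lipschitz by Step 1, and the continuity equation with such an Osgood-continuous, divergence-free drift has a \emph{unique} measure-valued solution with the prescribed initial datum, namely the push-forward of $q^{in}$ along the flow of $u$; hence $q=q_{\alpha }$, giving uniqueness.

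\textbf{Main obstacle.} The essential difficulty --- and what distinguishes this from a routine Cauchy--Lipschitz/Gronwall argument, or from the bounded vortex-blob regularizations recalled in the introduction --- is that $G^{\alpha }$, hence $K^{\alpha }$, is unbounded at the origin, so $K^{\alpha }$ is only log-Lipschitz, \emph{not} Lipschitz. The construction and uniqueness of the flow, and the uniqueness of measure solutions of the continuity equation, must all be carried out with an Osgood (log-Lipschitz) modulus --- Gronwall being replaced by $\int _{0^{+}}ds/\omega (s)=\infty $, $\omega (s)=Cs(1+|\log s|)$ --- and one must check that the mollify-and-pass-to-the-limit scheme gives bounds uniform in the mollification parameter. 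The remaining ingredients (oddness of $K^{\alpha }$, the total-variation bound $\Vert q_{\alpha }(t)\Vert _{\mathcal{M}}\leq \Vert q^{in}\Vert _{\mathcal{M}}$, divergence-freeness $\Rightarrow $ Lebesgue-measure preservation) are then routine.
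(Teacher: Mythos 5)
This theorem is not proved in the paper at all: it is quoted from Oliver and Shkoller \cite{a_OS01}, so there is no in-paper proof to compare against. Your sketch is nonetheless a faithful reconstruction of the strategy of that reference: the cancellation of the $1/r$ singularity in $D\Psi ^{\alpha }$ (the paper's expansion \eqref{eq:Der_psi_at_origin}), the resulting boundedness and log-Lipschitz continuity of $K^{\alpha }$ (this is precisely the estimate (2.14) of \cite{a_OS01} that the paper invokes later when proving \eqref{eq:Du_bound}), the closed fixed-point formulation on the Lagrangian flow with Osgood replacing Gronwall, measure preservation from $\diver u=0$, and the observation that $H_{\psi }^{\alpha }$ is continuous across the diagonal so no atomlessness hypothesis is needed. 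You have also correctly isolated the main obstacle (unboundedness of $G^{\alpha }$ at the origin, hence only a log-Lipschitz velocity).

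Two points in Step 3 are stated faster than they can be justified. First, the conclusion ``hence $q=q_{\alpha }$'' does not follow from uniqueness of the \emph{linear} continuity equation alone: two weak solutions $q_{1},q_{2}$ of \eqref{eq:EulerAlphaWeakVortForm} generate \emph{different} drifts $u_{i}=K^{\alpha }\ast q_{i}$, so after representing each as the push-forward of $q^{in}$ along its own flow $\eta _{i}$ you must still compare the two flows. The repair is the same Osgood estimate you already use in Step 2: $\left\vert u_{1}\left( \eta _{1}(x,t),t\right) -u_{2}\left( \eta _{2}(x,t),t\right) \right\vert \leq \left\Vert q^{in}\right\Vert _{\mathcal{M}}\,\omega \left( 2\left\Vert \eta _{1}-\eta _{2}\right\Vert _{C^{0}}\right) $ with $\omega (s)=Cs(1+\left\vert \log s\right\vert )$, whence $\eta _{1}=\eta _{2}$ by Osgood. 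Second, uniqueness of measure-valued solutions of the linear continuity equation with an Osgood (not Lipschitz) divergence-free drift is true but not free: the natural duality test function $\phi (x,t)=-\int_{t}^{T}\chi \left( \eta (s;t,x)\right) ds$ is only log-Lipschitz in $x$, hence not admissible in the weak formulation without a mollification and commutator step (or an appeal to the superposition principle). Both gaps are repairable with tools already present in your sketch, so the proposal is correct in outline with these two steps to be filled in.
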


Notice that the original BR equations assume \textit{a priori} that a vortex
sheet remains a curve at a later time, however, in the 2D Euler-$\alpha $
case, it follows as a consequence of the existence of the unique Lagrangian
flow map, that the vorticity that is initially supported on a curve remains
supported on a curve for all times.

\section{\label{sec:Convergence} Convergence of a fixed sign Euler-$\protect%
\alpha $ vortex sheet to an Euler vortex sheet}

Let the initial vorticity $q^{in}\in \mathcal{M}({\mathbb{R}}^{2})\cap
H_{loc}^{-1}\left( \mathbb{R}^{2}\right) $ be of a fixed sign, $q^{in}\geq 0$%
, and compactly supported. In this section we show that there is a
subsequence of the solutions of 2D Euler-$\alpha $ model with initial data $%
q^{in}$, guaranteed by Theorem \ref{thm:OS01}, that converge to a
weak solution of 2D Euler equations in the sense of
\eqref{eq:EulerWeakVortForm}. This produces a variant of the result
of Delort \cite{a_D91}, by obtaining a weak solution of Euler
equations as a limit of solutions of inviscid regularization of
Euler equations, namely, the Euler-$\alpha $ equations. The above
regularization method is different from the various existing
regularizations that are obtained, for instance, by smoothing the
initial data, viscous regularization or numerical vortex methods
\cite{a_D91,a_M93, a_S95, b_MB02,a_LX95, a_S96,a_LX00}. Since a weak
solution of Euler equations with vortex sheet is unlikely to be
unique, a different regularization could produce a different weak
solution of Euler equations.

In order to prove the convergence of the solutions $q_{\alpha }$ of
the Euler-$\alpha $ equations \eqref{grp:Euler_alpha_vortForm} to a
weak solution of Euler equations \eqref{grp:EulerEqVortForm} we
follow ideas similar to those reported in \cite{a_D91, b_MB02,
a_M93, a_S95}. However, due to the structure of the Euler-$\alpha $
equations one needs to deal with various technical estimates
concerning the \textquotedblleft filtered" vorticity $\omega
_{\alpha }=\left( 1-\alpha ^{2}\Delta \right) ^{-1}q_{\alpha }$ and
$\alpha ^{2}\Delta \omega _{\alpha }=q_{\alpha }-\omega _{\alpha }$.
Specifically, we show in Lemma \ref{lemma:uniformDecay} and Lemma
\ref{lemma:laplacianOmegaNorm}, respectively, that $\omega _{\alpha
}$ have a uniform decay in small disks, $\sup_{\alpha >0,0\leq t\leq
T,0<R<1,x_{0}\in {\ \mathbb{R}}^{2}}\int_{\left\vert
x-x_{0}\right\vert <R}d\omega _{\alpha }\left( x,t\right) \leq
C\left(
T\right) \left\vert \log R\right\vert ^{-1/2} $, and the contribution of $%
\int_{\mathbb{R}^{2}}d\left\vert \alpha ^{2}\Delta \omega _{\alpha
}\right\vert $ converges to zero, as $\alpha \rightarrow 0$.

\begin{theorem}
\label{thm:Convergence}Let $q_{\alpha }$ be the solutions of the weak
vorticity formulation of Euler-$\alpha $ equations %
\eqref{eq:EulerAlphaWeakVortForm}, guaranteed by Theorem \ref{thm:OS01},
with initial data $q^{in}\in \mathcal{M}({\mathbb{R}}^{2})\cap
H_{loc}^{-1}\left( \mathbb{R}^{2}\right) $, $q^{in}\geq 0$ and compactly
supported and let $T>0$. Then there exists a subsequence $q_{\alpha _{j}}$
that weak-$\ast $ converges to $q$ in $L^{\infty }\left( \left[ 0,T\right] ;
\mathcal{M}({\mathbb{R}}^{2})\right) $ and in $\mathcal{M}\left( \mathbb{R}
^{2}\right) $ for each fixed $t$, as $\alpha _{j}\rightarrow 0$, and $q$ is
a weak solution of the Euler equations \eqref{grp:EulerEqVortForm} in the
sense of \eqref{eq:EulerWeakVortForm} with initial data $q^{in}$.
\end{theorem}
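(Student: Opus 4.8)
The plan is to follow the Delort-type compactness scheme, adapted to the $\alpha$-equations via the decomposition $q_\alpha = \omega_\alpha + \alpha^2\Delta\omega_\alpha$, where $\omega_\alpha = (1-\alpha^2\Delta)^{-1}q_\alpha$ is the filtered vorticity. First I would establish the uniform bounds: since the flow map $\eta_\alpha$ preserves Lebesgue measure (Theorem \ref{thm:OS01}) and $q_\alpha(\cdot,t) = q^{in}\circ\eta_\alpha^{-1}(\cdot,t)$, the total mass $\|q_\alpha(\cdot,t)\|_{\mathcal M} = \|q^{in}\|_{\mathcal M}$ is conserved and the support stays in a ball growing at a controlled (in fact uniform in $\alpha$, using the $L^\infty$ bound on $u$ that follows from $q^{in}\in H^{-1}_{loc}$) rate on $[0,T]$. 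This gives, after testing the weak formulation \eqref{eq:EulerAlphaWeakVortForm} against $\psi$, a uniform Lipschitz-in-time bound for $q_\alpha$ in some $H^{-L}_{loc}$, $L>1$, hence by Banach–Alaoglu and Arzelà–Ascoli a subsequence $q_{\alpha_j}$ converging weak-$\ast$ in $L^\infty([0,T];\mathcal M(\mathbb R^2))$ and, for each fixed $t$, in $\mathcal M(\mathbb R^2)$, to a limit $q\in L^\infty([0,T];\mathcal M)\cap Lip([0,T];H^{-L}_{loc})$ of fixed sign with the same mass and compact support; the initial datum passes to the limit because of the time continuity. One also needs $q\in L^\infty([0,T];H^{-1}_{loc})$, which follows because the $H^{-1}_{loc}$ norm is weakly lower semicontinuous and the energy-type quantity controlling it is non-increasing — this is where the fixed-sign hypothesis and the $H^{-1}_{loc}$ assumption on $q^{in}$ are used, exactly as in Delort.

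Next I would prove the two lemmas invoked in the text. For Lemma \ref{lemma:uniformDecay}, the uniform concentration-cancellation estimate $\int_{|x-x_0|<R}d\omega_\alpha(x,t)\le C(T)|\log R|^{-1/2}$ for $R<1$: I would exploit that $\omega_\alpha\ge 0$ has uniformly bounded $H^{-1}$-type energy (again from the conserved energy of the $\alpha$-system, which dominates $\alpha^2\|\omega_\alpha\|_{H^1}^2$ and a Green's-function quadratic form in $\omega_\alpha$), then use the standard potential-theory argument: a nonnegative measure with bounded logarithmic energy cannot concentrate too much mass in a small disk, quantitatively with the $|\log R|^{-1/2}$ rate. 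For Lemma \ref{lemma:laplacianOmegaNorm}, $\int_{\mathbb R^2}d|\alpha^2\Delta\omega_\alpha| = \|q_\alpha-\omega_\alpha\|_{L^1}\to 0$: write $q_\alpha-\omega_\alpha = \alpha^2\Delta G^\alpha\ast q_\alpha = (\delta - G^\alpha)\ast q_\alpha$ and estimate using $\|q_\alpha\|_{\mathcal M}=\|q^{in}\|_{\mathcal M}$ together with the $L^1$-normalization $\int G^\alpha =1$ and an approximate-identity argument; the genuinely delicate point is that $G^\alpha$ is unbounded at the origin (a $K_0$ singularity), so the convergence must be squeezed through the logarithmic integrability of $K_0$ and a splitting into near/far regions, combined with regularity of $q_\alpha$ on small scales supplied precisely by Lemma \ref{lemma:uniformDecay}.

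Finally I would pass to the limit in the nonlinear term. Writing $W^\alpha(q_\alpha;\psi) = 0$, I split $H^\alpha_\psi$ into the Euler kernel $H_\psi$ plus a correction $H^\alpha_\psi - H_\psi$ involving $\frac12 D\Psi^\alpha(|x-y|)|x-y| - \frac{1}{4\pi}$, which is supported (effectively) on $|x-y|\lesssim\alpha$ and tends to zero there; bounding its double-integral against $dq_\alpha\,dq_\alpha$ uses the non-concentration Lemma \ref{lemma:uniformDecay} to kill the near-diagonal contribution uniformly. For the main term $\int\int H_\psi\, dq_\alpha\, dq_\alpha$ one uses the Delort symmetrization: $H_\psi$ is bounded and continuous off the diagonal, and the limiting measure $q$ — being in $H^{-1}_{loc}$ — has no atoms, so the diagonal is $q\otimes q$-null; the standard argument (approximating $H_\psi$ by a continuous truncation near the diagonal, using weak-$\ast$ convergence of $q_{\alpha_j}\otimes q_{\alpha_j}$ and, again, Lemma \ref{lemma:uniformDecay} to control the truncation error uniformly in $j$) yields $\int\int H_\psi\,dq_{\alpha_j}\,dq_{\alpha_j}\to\int\int H_\psi\,dq\,dq$. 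The linear term $\int\int\partial_t\psi\,dq_{\alpha_j}$ converges by the weak-$\ast$ convergence. Combining everything gives $W(q;\psi)=0$ for all $\psi\in C_c^\infty(\mathbb R^2\times(0,T))$, so $q$ is a weak solution of the 2D Euler equations with initial data $q^{in}$. I expect the main obstacle to be the interplay between the unboundedness of $G^\alpha$ at the origin and the near-diagonal estimates: both Lemma \ref{lemma:laplacianOmegaNorm} and the cancellation in the nonlinear term require a careful, uniform-in-$\alpha$ quantitative handle on how much $q_\alpha$ (and $\omega_\alpha$) can pile up at scales comparable to $\alpha$, and getting the logarithmic rate to close the argument is the crux.
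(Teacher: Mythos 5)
Your overall architecture coincides with the paper's: the decomposition $q_\alpha=\omega_\alpha+\alpha^2\Delta\omega_\alpha$, compactness via the conserved mass and a Lipschitz-in-time bound in a negative Sobolev space, the two auxiliary lemmas, and the splitting of the nonlinear term into the weak-$\ast$ convergence of the product measures plus the kernel correction $H^\alpha_\psi-H_\psi$ (which is indeed killed by the decay of $rK_1(r)$ away from the diagonal and by non-concentration near it). Two of your steps, however, would fail as described.

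First, the ``conserved energy of the $\alpha$-system'' cannot be invoked for Lemma~\ref{lemma:uniformDecay}: for $q^{in}\ge 0$ with $\int dq^{in}\neq 0$ the velocity decays only like $|x|^{-1}$, so $u^{in}\notin L^2(\mathbb{R}^2)$ and there is no finite conserved energy to begin with. The paper mollifies the data, subtracts a fixed radial steady solution $\bar v$ so that the remainder is in $L^2$, and proves a Gr\"onwall (not conservation) estimate for $\|\tilde u\|_{L^2}^2+\alpha^2\|\nabla\tilde u\|_{L^2}^2$ for the perturbation equation \eqref{eq:v_tilda}; only this produces the uniform local $L^2$ bound on $u_{\alpha,\varepsilon}$ that feeds the logarithmic-cutoff test function argument, where the positivity of $\omega_{\alpha,\varepsilon}$ enters. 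Second, and more seriously, the claim $\|q_\alpha-\omega_\alpha\|_{\mathcal M}\to 0$ in total variation cannot be reached by an approximate-identity argument, and no near/far splitting or appeal to the integrability of $K_0$ rescues it: when $q_\alpha$ is singular with respect to Lebesgue measure --- precisely the vortex-sheet case, where $q_\alpha$ lives on a curve --- the convolution $\omega_\alpha=G^\alpha\ast q_\alpha$ is absolutely continuous, the two measures are mutually singular, and $|q_\alpha-\omega_\alpha|(\mathbb{R}^2)=\|q_\alpha\|_{\mathcal M}+\|\omega_\alpha\|_{\mathcal M}=2\|q^{in}\|_{\mathcal M}$ for every $\alpha$. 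What the convolution structure does give, by duality (using $\langle q_\alpha-\omega_\alpha,\varphi\rangle=\int(\varphi-G^\alpha\ast\varphi)\,dq_\alpha$ and $\int G^\alpha(y)|y|\,dy=C\alpha$), is the flat-norm estimate $|\langle q_\alpha-\omega_\alpha,\varphi\rangle|\le C\alpha\|\nabla\varphi\|_{L^\infty}\|q^{in}\|_{\mathcal M}$; this is what should be used to transfer the non-concentration bound from $\omega_\alpha$ to the limit $q$, and you should be aware that the paper's Lemma~\ref{lemma:laplacianOmegaNorm}, though stated in total-variation form, is justified there only by this same dual computation against $\nabla\theta$. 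Relatedly, Lemma~\ref{lemma:uniformDecay} controls $\omega_\alpha$, not $q_\alpha$: the measure $q_\alpha$ itself has no uniform decay on small disks (its supporting curve may wind arbitrarily inside a small ball), so every near-diagonal estimate for $\iint dq_\alpha\,dq_\alpha$ must be routed through $\omega_\alpha$ --- which is exactly where the distinction between total-variation and flat-norm control of $\alpha^2\Delta\omega_\alpha$ becomes the crux you correctly anticipated.
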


The weak-$\ast $ convergence in $L^{\infty }\left( \left[ 0,T\right] ;%
\mathcal{M}({\mathbb{R}}^{2})\right) $ means that
\begin{equation*}
\lim_{\alpha _{j}\rightarrow \infty }\int_{0}^{T}\int_{\mathbb{R}%
^{2}}\varphi \left( x,t\right) dq_{\alpha _{j}}\left( x,t\right)
dt=\int_{0}^{T}\int_{\mathbb{R}^{2}}\varphi \left( x,t\right) dq\left(
x,t\right) dt,
\end{equation*}%
for all $\varphi \in L^{1}\left( \left[ 0,T\right] ;\mathcal{C}_{0}({\mathbb{%
R}}^{2})\right) $.

We denote the velocity and the \textquotedblleft filtered" velocity
by $v_{\alpha }$ and $u_{\alpha }$, respectively, and their
corresponding
vorticities by $q_{\alpha }=\curl v_{\alpha }$ and $\omega _{\alpha }=\curl %
u_{\alpha }$.

Given $q_{\alpha }\in \mathcal{M}({\mathbb{R}}^{2})$, we define a linear
continuous functional $\omega _{\alpha }=\left( 1-\alpha ^{2}\Delta \right)
^{-1}q_{\alpha }$ acting on every $\varphi \in C_{0}\left( \mathbb{R}%
^{2}\right) $ by%
\begin{equation}
\left\langle \omega _{\alpha },\varphi \right\rangle =\int_{\mathbb{R}%
^{2}}\left( \left( 1-\alpha ^{2}\Delta \right) ^{-1}\varphi \right)
dq_{\alpha },  \label{eq:omega_from_q}
\end{equation}%
where $\psi =\left( 1-\alpha ^{2}\Delta \right) ^{-1}\varphi $ is defined as
the unique, vanishing at infinity, solution of $\varphi =\left( 1-\alpha
^{2}\Delta \right) \psi $, given by
\begin{equation}
\left( 1-\alpha ^{2}\Delta \right) ^{-1}\varphi =\int_{\mathbb{R}^{2}}\frac{1%
}{\alpha ^{2}}\frac{1}{2\pi }K_{0}\left( \frac{\left\vert y\right\vert }{%
\alpha }\right) \varphi \left( x-y\right) dy,  \label{eq:u_from_v}
\end{equation}%
the function $K_{0}$ is a modified Bessel function of the second kind of
order zero, $K_{0}>0$, $\int_{0}^{\infty }K_{0}\left( r\right) rdr=1$, see,
e.g., \cite{b_W44}. From the above its follows that $\left\Vert \left(
1-\alpha ^{2}\Delta \right) ^{-1}\varphi \right\Vert _{L^{\infty }}\leq
\left\Vert \varphi \right\Vert _{L^{\infty }}$.

We observe that if $q_{\alpha }\geq 0$ then $\omega _{\alpha }$ is a
nonnegative linear functional. Indeed, let $\varphi \in C_{0}\left( \mathbb{R%
}^{2}\right) $, $\varphi \geq 0$, then
\begin{equation*}
\left( 1-\alpha ^{2}\Delta \right) ^{-1}\varphi =\int_{\mathbb{R} ^{2}}\frac{%
1}{\alpha ^{2}}\frac{1}{2\pi }K_{0}\left( \frac{\left\vert y\right\vert }{%
\alpha }\right) \varphi \left( x-y\right) dy\geq 0,
\end{equation*}%
and hence by \eqref{eq:omega_from_q} $\left\langle \omega _{\alpha },\varphi
\right\rangle \geq 0$. Also,
\begin{equation*}
\left\vert \,\left\langle \omega _{\alpha },\varphi \right\rangle
\right\vert \leq \left\Vert q_{\alpha }\right\Vert _{\mathcal{M}}\left\Vert
\left( 1-\alpha ^{2}\Delta \right) ^{-1}\varphi \right\Vert _{L^{\infty
}}\leq \left\Vert q_{\alpha }\right\Vert _{\mathcal{M}}\left\Vert \varphi
\right\Vert _{L^{\infty }}.
\end{equation*}%
Therefore, by the Riesz representation theorem (see, e.g., \cite[Chapter 7]%
{b_F99} ) the functional $\omega _{\alpha }$ can be represented by a unique
nonnegative Radon measure, which we also denote by $\omega _{\alpha }$, and
\begin{equation}
\left\Vert \omega _{\alpha }\right\Vert _{\mathcal{M}}\leq \left\Vert
q_{\alpha }\right\Vert _{\mathcal{M}}.  \label{eq:omega_M_norm_bound}
\end{equation}%
Again, by the Riesz representation theorem, a linear functional $\left(
\alpha ^{2}\Delta \omega _{\alpha }\right) $ defined by
\begin{equation}
\left\langle \alpha ^{2}\Delta \omega _{\alpha },\varphi \right\rangle
=\int_{\mathbb{R}^{2}}\left( \alpha ^{2}\Delta \left( 1-\alpha ^{2}\Delta
\right) ^{-1}\varphi \right) dq_{\alpha },  \label{eq:laplacian_omega_from_q}
\end{equation}%
for every $\varphi \in C_{0}\left( \mathbb{R}^{2}\right) $, can be
identified with a Radon measure, which we also denote by $\alpha ^{2}\Delta
\omega _{\alpha }$. Observe that, since for every $\varphi \in C_{0}\left(
\mathbb{R}^{2}\right) $
\begin{equation*}
\alpha ^{2}\Delta \left( 1-\alpha ^{2}\Delta \right) ^{-1}\varphi =\left(
1-\alpha ^{2}\Delta \right) ^{-1}\varphi -\varphi ,
\end{equation*}%
we have
\begin{equation*}
\left\vert \,\left\langle \alpha ^{2}\Delta \omega _{\alpha },\varphi
\right\rangle \right\vert \leq \left\Vert q_{\alpha }\right\Vert _{\mathcal{M%
}}\left\Vert \alpha ^{2}\Delta \left( 1-\alpha ^{2}\Delta \right)
^{-1}\varphi \right\Vert _{L^{\infty }}\leq 2\left\Vert q_{\alpha
}\right\Vert _{\mathcal{M}}\left\Vert \varphi \right\Vert _{L^{\infty }},
\end{equation*}%
that is, $\left\Vert \alpha ^{2}\Delta \omega _{\alpha }\right\Vert _{%
\mathcal{M}\left( \mathbb{R}^{2}\right) }\leq 2\left\Vert q_{\alpha
}\right\Vert _{\mathcal{M}\left( \mathbb{R}^{2}\right) }.$

We note that by Theorem \ref{thm:OS01} the solution $q_{\alpha }$ of Euler-$%
\alpha $ equations \eqref{eq:EulerAlphaWeakVortForm} is transported
by the flow, that is, $q_{\alpha }\left( x,t\right) =q^{in}\circ
\eta _{\alpha }^{-1}\left( x,t\right) $, $\eta _{\alpha }\in
C^{1}\left( \left[ 0,T\right] ;\mathcal{G}\right) $, hence for all
$t$
\begin{equation}
\left\Vert q_{\alpha }\left( \cdot ,t\right) \right\Vert _{\mathcal{M}
}=\left\Vert q^{in}\right\Vert _{\mathcal{M}}.  \label{eq:Mspace_vort_bound}
\end{equation}%
In addition, if $q^{in}\geq 0$, then $q_{\alpha }\geq 0$ for all times, and
therefore also $\omega _{\alpha }\geq 0$ for all times.

The kernel $H_{\psi }$ appearing in the non-linear term of %
\eqref{eq:EulerWeakVortForm} is discontinuous on the diagonal $x=y$, so,
following \cite{a_DM87,b_MB02}, to prove the convergence of the non-linear
term we need the following estimate, which shows uniform decay of the
\textquotedblleft filtered" vorticity $\omega _{\alpha }$ in small disks.

\begin{lemma}
\label{lemma:uniformDecay}Let $q_{\alpha }$ be the solutions of %
\eqref{eq:EulerAlphaWeakVortForm} with initial data $q^{in}\in \mathcal{M}({%
\mathbb{R}}^{2})\cap H_{loc}^{-1}\left( \mathbb{R}^{2}\right) $, $q^{in}\geq
0$ and compactly supported. Then for $\omega _{\alpha }=\left( 1-\alpha
^{2}\Delta \right) ^{-1}q_{\alpha }$ defined by \eqref{eq:omega_from_q},
there exists a constant $C=C\left( T\right) $, such that for all $\alpha >0$%
, $0\leq t\leq T$, $0<R<1$ and $x_{0}\in {\mathbb{R}}^{2}$ we have
\begin{equation}
\int_{\left\vert x-x_{0}\right\vert <R}d\omega _{\alpha }\left( x,t\right)
\leq C\left( T\right) \left\vert \log R\right\vert ^{-1/2}.
\label{eq:uniform_decay}
\end{equation}
\end{lemma}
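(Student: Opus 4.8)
The plan is to reduce \eqref{eq:uniform_decay} to two ingredients: (a) the classical concentration--cancellation estimate of DiPerna and Majda, applied to the filtered velocity $u_\alpha=K\ast\omega_\alpha$ --- note that $\curl u_\alpha=\omega_\alpha$ with $K$ the \emph{unregularized} Biot--Savart kernel, so the usual circulation identities hold for $u_\alpha$ verbatim, which is exactly why the estimate is stated for $\omega_\alpha$ rather than $q_\alpha$; and (b) a bound on the local kinetic energy of $u_\alpha$ that is uniform in $\alpha$. Part (a) is standard and only uses positivity; part (b), which must exploit the energy structure of the Euler-$\alpha$ system, is the only genuinely new point. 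To start, by Theorem~\ref{thm:OS01} we have $q_\alpha(\cdot,t)=q^{in}\circ\eta_\alpha^{-1}(\cdot,t)$ with $\eta_\alpha$ measure preserving, so $q_\alpha(\cdot,t)\ge 0$ and $\|q_\alpha(\cdot,t)\|_{\mathcal M}=\|q^{in}\|_{\mathcal M}$ for all $t\in[0,T]$; hence $\omega_\alpha(\cdot,t)\ge 0$ and $\|\omega_\alpha(\cdot,t)\|_{\mathcal M}\le\|q^{in}\|_{\mathcal M}$, and since $K^\alpha$ is bounded and continuous, $u_\alpha(\cdot,t)$ is continuous.

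For a.e.\ $r>0$ Stokes' theorem gives $\oint_{|x-x_0|=r}u_\alpha\cdot\tau\,ds=\int_{|x-x_0|<r}d\omega_\alpha(\cdot,t)$. Fix $x_0,t$ and $R<1$, set $m:=\int_{|x-x_0|<R}d\omega_\alpha(\cdot,t)$; because $\omega_\alpha\ge 0$, the circulation over the circle $|x-x_0|=r$ is $\ge m$ for every $r\ge R$, so the Cauchy--Schwarz inequality yields $m^2\le 2\pi r\oint_{|x-x_0|=r}|u_\alpha|^2\,ds$ for a.e.\ $r\in[R,1]$. Dividing by $2\pi r$ and integrating in $r$ from $R$ to $1$,
\[
\frac{m^2}{2\pi}\,|\log R|\;\le\;\int_{R<|x-x_0|<1}|u_\alpha(\cdot,t)|^2\,dx\;\le\;\sup_{x_1\in\mathbb R^2}\int_{B_1(x_1)}|u_\alpha(\cdot,t)|^2\,dx .
\]
Therefore \eqref{eq:uniform_decay} follows as soon as the last quantity is shown to be $\le C(T)$ uniformly in $\alpha>0$ and $t\in[0,T]$ (for $x_0$ outside a fixed neighbourhood of $\supp q_\alpha(\cdot,t)$ the left side of \eqref{eq:uniform_decay} vanishes for small $R$, so there is nothing to prove there).

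To obtain the uniform local energy bound, write $u_\alpha=G^\alpha\ast v_\alpha$ with $v_\alpha=K\ast q_\alpha$. At $t=0$, $v^{in}=K\ast q^{in}\in L^2_{\rm loc}$ since $q^{in}\in H^{-1}_{\rm loc}$, and $v^{in}$ is bounded at infinity; splitting $G^\alpha$ according to $|y|<1$ and $|y|\ge1$ and using $\|G^\alpha\|_{L^1}=1$ together with $\|G^\alpha\|_{L^1(|y|\ge1)}\to0$ as $\alpha\to0$, one gets $\sup_{x_1}\|u_\alpha^{in}\|_{L^2(B_1(x_1))}\le C$ uniformly in $\alpha$. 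This is propagated in time by localizing the Euler-$\alpha$ energy identity $\frac{d}{dt}\int\bigl(|u_\alpha|^2+\alpha^2|\nabla u_\alpha|^2\bigr)=0$: with a fixed cutoff $\zeta$ one obtains $\frac{d}{dt}\int\zeta\bigl(|u_\alpha|^2+\alpha^2|\nabla u_\alpha|^2\bigr)=\int(\nabla\zeta)\cdot F_\alpha$, where, thanks to the divergence form of the Euler-$\alpha$ nonlinearity, $F_\alpha$ is supported on $\supp\nabla\zeta$ and is quadratic in $(u_\alpha,v_\alpha)$; it is estimated by the local energy itself and by quantities which, after multiplication by the appropriate power of $\alpha$, are controlled by the conserved mass $\|q_\alpha\|_{\mathcal M}=\|q^{in}\|_{\mathcal M}$ through the scaling $G^\alpha(x)=\alpha^{-2}G(x/\alpha)$ (for instance $\alpha^2\int|\nabla u_\alpha|^2=\alpha^2\|\omega_\alpha\|_{L^2}^2=\alpha^2\|G^\alpha\ast q_\alpha\|_{L^2}^2\le\|G\|_{L^2}^2\|q^{in}\|_{\mathcal M}^2$), together with a localized elliptic estimate for the pressure $\pi$. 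A Gronwall argument over a finite cover of a neighbourhood of $\supp q_\alpha(\cdot,t)$ (which stays bounded on $[0,T]$) then gives $\sup_{x_1}\int_{B_1(x_1)}|u_\alpha(\cdot,t)|^2\le C(T)$ uniformly in $\alpha$, which closes the argument.

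The hard part is exactly this last step: propagating the \emph{local} kinetic energy of $u_\alpha$ with constants independent of $\alpha$, in spite of the facts that the global energy may be infinite (when $\int q^{in}\ne0$ the velocity has a slowly decaying far field) and that $u_\alpha$ is only $O(\alpha^{-1})$-bounded in $H^1_{\rm loc}$, so that no uniform Sobolev embedding is available. The point is to keep every dangerous contribution (the cubic flux, the $\alpha^2\Delta$-corrections, the pressure) confined to $\supp\nabla\zeta$ and expressed through the local energy --- which is conserved modulo these flux terms --- and through the cheap $\alpha$-weighted bounds coming from the $\mathcal M$-bound on $q_\alpha$ and the scaling of $G^\alpha$, so that a Gronwall inequality closes uniformly in $\alpha$. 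Once this uniform local energy bound is in hand, the rest is the classical circulation/Cauchy--Schwarz argument already displayed above.
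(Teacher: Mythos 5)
Your overall architecture is the same as the paper's: positivity of $\omega_\alpha$ plus the circulation/Cauchy--Schwarz (DiPerna--Majda concentration) argument reduces \eqref{eq:uniform_decay} to a uniform-in-$\alpha$, uniform-in-$t$ bound on $\sup_{x_1}\|u_\alpha(\cdot,t)\|_{L^2(B_1(x_1))}$; the identity $u_\alpha=K\ast\omega_\alpha$ and the fixed sign are used exactly as in the paper (the paper implements the circulation step with the explicit cutoff $\delta_R$ rather than integrating over circles, and works with mollified data $q^{in}_\varepsilon$ so that all manipulations are classical, passing to the limit $\varepsilon\to0$ at the end --- a step you should not omit, since applying Stokes' theorem to a measure-valued $\omega_\alpha$ directly requires justification).

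The genuine gap is in your step (b), the propagation of the local kinetic energy. You propose to localize the Euler-$\alpha$ energy identity with a cutoff $\zeta$ and close a Gronwall inequality using only the local energy and the $\mathcal{M}$-bound on $q_\alpha$. But the flux $F_\alpha$ produced by localizing the self-interaction terms $(u\cdot\nabla)v+\sum_j v_j\nabla u_j+\nabla\pi$ is \emph{not} quadratic-and-controllable: writing $v=u-\alpha^2\Delta u$, it contains the cubic transport flux $\int\nabla\zeta\cdot u\,|u|^2$ and the pressure flux $\int\nabla\zeta\cdot u\,\pi$. A uniform bound on $\int_{B_1}|u_\alpha|^2$ does not control $\int_{\supp\nabla\zeta}|u_\alpha|^3$ (no uniform $L^3_{loc}$ is available, since $\|u_\alpha\|_{H^1_{loc}}$ is only $O(\alpha^{-1})$), and the pressure for measure-valued vorticity is not locally controlled by the conserved mass either. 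This is precisely the known obstruction to local energy estimates for 2D flows with vortex-sheet data, and it is why the paper (following DiPerna--Majda and Schochet) instead decomposes $v_{\alpha,\varepsilon}=\bar v+\tilde v_{\alpha,\varepsilon}$, where $\bar v$ is a smooth \emph{radially symmetric stationary} solution carrying the total circulation $\int dq^{in}$. Then $\tilde v^{in}_\varepsilon$ has zero mean vorticity and compact support, hence lies in $L^2(\mathbb{R}^2)$ \emph{globally}, and the energy estimate for $\tilde u_{\alpha,\varepsilon}$ is performed on all of $\mathbb{R}^2$: the dangerous self-interaction terms vanish identically upon global integration (by the antisymmetric structure of the Euler-$\alpha$ nonlinearity), only cross terms with the bounded fields $\bar u,\nabla\bar u,\alpha\,\partial^2\bar u$ survive, and Gronwall closes with constants depending only on $\|\nabla\bar v\|_{L^\infty}$ and $T$. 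The local bound on $u_{\alpha,\varepsilon}$ then follows by adding back $\bar u\in L^\infty$. Without this (or an equivalent) device, your Gronwall argument does not close, so the ``hard part'' you defer is not merely technical --- as sketched, it fails.
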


\begin{proof}
Recall that $\omega _{\alpha }\geq 0$ for all times. The idea of the proof,
which is shown in details below, is to convolve the initial data with a
standard $C_{c}^{\infty }\left( \mathbb{R}^{2}\right) $ mollifier to obtain
a sequence of solutions of the Euler-$\alpha $ equations that has a uniform
decay of the circulation on small disks%
\begin{equation*}
\int_{\left\vert x-x_{0}\right\vert \leq R}\omega _{\alpha ,\varepsilon
}\left( x,t\right) dx\leq C\left( T\right) \left\vert \log R\right\vert
^{-1/2},
\end{equation*}%
$0<\varepsilon \leq \varepsilon _{0}$, $0\leq t\leq T$, $R<1$, and then the
weak-$\ast $ limit in $L^{\infty }\left( \left[ 0,T\right] ,\mathcal{M}%
\left( \mathbb{R}^{2}\right) \right) $ of a subsequence $\omega _{\alpha
,\varepsilon _{j}}$ when $\varepsilon _{j}\rightarrow 0$, which is the
solution of Euler-$\alpha $ equations with initial data $q^{in}$, satisfies
a similar bound.

We observe that, similarly to the Euler equations, any smooth radially
symmetric vanishing at infinity vorticity $\bar{q}\left( \left\vert
x\right\vert \right) $ defines a stationary solution of Euler-$\alpha $
equations \eqref{grp:Euler_alpha_vortForm} with the corresponding velocity $%
\bar{v}\left( x\right) =\nabla ^{\perp }\Delta ^{-1}\bar{q}\left( \left\vert
x\right\vert \right) =\frac{x^{\perp }}{\left\vert x\right\vert ^{2}}
\int_{0}^{\left\vert x\right\vert }s\bar{q}\left( s\right) ds$. This could
be seen using the vorticity stream function formulation for Euler-$\alpha $
equations, which is%
\begin{align*}
& q_{t}+J\left( \varphi ,\Delta \psi \right) =0, \\
& q=\Delta \psi ,
\end{align*}%
where $J\left( \varphi ,\chi \right) =$ $\frac{\partial \varphi }{\partial
x_{1}}\frac{\partial \chi }{\partial x_{2}}-\frac{\partial \varphi }{%
\partial x_{2}}\frac{\partial \chi }{\partial x_{1}}$ is the Jacobian, $\psi
$ is the velocity stream function, $v=\nabla ^{\perp }\psi $, and $\varphi
=\left( 1-\alpha ^{2}\Delta \right) ^{-1}\psi $ is the \textquotedblleft
filtered" stream function, $u=\nabla ^{\perp }\varphi $. Since $\Delta $ and
$\left( 1-\alpha ^{2}\Delta \right) $ are rotationally invariant, we have
that the corresponding $\bar{\omega}=\left( 1-\alpha ^{2}\Delta \right) ^{-1}%
\bar{q}$, $\bar{\psi}=\Delta ^{-1}\bar{q}$ and $\bar{\varphi}=\left(
1-\alpha ^{2}\Delta \right) ^{-1}\bar{\psi}$ are also radially symmetric,
therefore $J\left( \bar{\varphi},\Delta \bar{\psi}\right) =0$ and hence $%
\bar{q}$ defines a stationary solution of Euler-$\alpha $ equations.

Let $\rho \in C_{c}^{\infty }\left( \mathbb{R}^{2}\right) $ be a standard
mollifier, for example,
\begin{equation*}
\rho \left( x\right) =\left\{
\begin{array}{cc}
C\exp \left( 1/\left( \left\vert x\right\vert ^{2}-1\right) \right) & \text{%
if }\left\vert x\right\vert <1, \\
0 & \text{if }\left\vert x\right\vert \geq 1,%
\end{array}%
\right.
\end{equation*}%
$\int_{\mathbb{R}^{2}}\rho =1$, $\rho _{\varepsilon }\left( x\right) =\frac{1%
}{\varepsilon ^{2}}\rho \left( \frac{x}{\varepsilon }\right) $. Smoothing
the initial data by a mollification with $\rho _{\varepsilon }$, $%
q_{\varepsilon }^{in}=\rho _{\varepsilon }\ast q^{in}$, we have that for all
$0<\varepsilon <\varepsilon _{0}$ the smoothed initial vorticities satisfy $%
q_{\varepsilon }^{in}\geq 0$, $\supp q_{\varepsilon }^{in}\subseteq \left\{
x|\left\vert x\right\vert <R_{0}\right\} $ (since $q^{in}$ is compactly
supported), $\int_{ \mathbb{R} ^{2}}q_{\varepsilon }^{in}\left( x\right)
dx=\int_{ \mathbb{R} ^{2}}dq^{in}\left( x\right) $. Following \cite%
{a_DM87,b_MB02} for the 2D Euler case we decompose the velocity into a
combination of a stationary bounded velocity plus a time dependent velocity
with finite total energy. Let $\bar{q}\left( \left\vert x\right\vert \right)
$ be any smooth radially symmetric function with compact support, such that $%
\int_{\mathbb{R} ^{2}}\bar{q}\left( \left\vert x\right\vert \right) dx=\int_{%
\mathbb{R} ^{2}}dq^{in}\left( x\right) $. Define $\bar{v}=K\ast \bar{q}$, $%
\tilde{q}_{\varepsilon }^{in}=q_{\varepsilon }^{in}-\bar{q}$ and $\tilde{v}%
_{\varepsilon }^{in}=$ $K\ast \tilde{q}_{\varepsilon }^{in}$. Notice, that
by direct calculation $\diver\bar{v}=0$ and $\bar{v},\nabla \bar{v},\partial
^{2}\bar{v}\in L^{\infty }\left( \mathbb{R}^{2}\right) $. Since $\int_{%
\mathbb{R}^{2}}\tilde{q}_{\varepsilon }^{in}=0$, and
$\tilde{q}_{\varepsilon }^{in}$ has compact support we have that
$\tilde{v}_{\varepsilon }^{in}\in L^{2}\left( \mathbb{R}^{2}\right)
$. Also, due to the fact that $q^{in}\in
\mathcal{M}({\mathbb{R}}^{2})\cap H_{loc}^{-1}\left(
\mathbb{R}^{2}\right) $ with compact support, and hence, for
$\varepsilon \leq \varepsilon _{0}$, the smooth $q_{\varepsilon
}^{in}$ are uniformly bounded in $L^{1}$ with a common compact
support and $v_{\varepsilon }^{in}=K\ast q_{\varepsilon }^{in} $ are
uniformly bounded in $L_{loc}^{2}$, and since $\bar{q}$ is
independent of $\varepsilon$, we have that $\tilde{v}_{\varepsilon
}^{in}$ are uniformly bounded in $L^{2}\left( \mathbb{R}^{2}\right) $, for $%
\varepsilon \leq \varepsilon _{0}$.

Observe, that the stationary part
\begin{equation*}
\bar{u}\left( x\right) =\left( 1-\alpha ^{2}\Delta \right) ^{-1}\bar{v}%
\left( x\right) =\int_{\mathbb{R}^{2}}\frac{1}{\alpha ^{2}}\frac{1}{2\pi }%
K_{0}\left( \frac{\left\vert y\right\vert }{\alpha }\right) \bar{v}\left(
x-y\right) dy,
\end{equation*}
satisfies
\begin{align}
& \left\Vert \bar{u}\right\Vert _{L^{\infty }}\leq \left\Vert \bar{v}%
\right\Vert _{L^{\infty }},  \label{eq:u_bar_bounds} \\
& \left\Vert \nabla \bar{u}\right\Vert _{L^{\infty }}\leq \left\Vert \nabla
\bar{v}\right\Vert _{L^{\infty }},  \notag \\
& \left\Vert \partial ^{2}\bar{u}\right\Vert _{L^{\infty }}\leq \frac{1}{%
2\pi }\frac{1}{\alpha }\left\Vert \nabla \bar{v}\right\Vert _{L^{\infty }},
\notag
\end{align}%
since $K_{0}$ and its derivative are smooth functions outside of the origin,
satisfying $\left\vert K_{0}\left( r\right) \right\vert $ $\leq C\log r$, $%
\left\vert DK_{0}\left( r\right) \right\vert \leq Cr^{-1}$ and rapidly
decaying at infinity.

Consider the partial differential equation
\begin{align}
\frac{\partial }{\partial t}\tilde{v}_{\alpha ,\varepsilon }& +\left( \tilde{%
u}_{\alpha ,\varepsilon }\cdot \nabla \right) \tilde{v}_{\alpha ,\varepsilon
}+\sum_{j}\left( \tilde{v}_{\alpha ,\varepsilon }\right) _{j}\nabla \left(
\tilde{u}_{\alpha ,\varepsilon }\right) _{j}  \label{eq:v_tilda} \\
& +\left( \tilde{u}_{\alpha ,\varepsilon }\cdot \nabla \right) \bar{v}%
+\sum_{j}\bar{v}_{j}\nabla \left( \tilde{u}_{\alpha ,\varepsilon }\right)
_{j}  \notag \\
& +\left( \bar{u}\cdot \nabla \right) \tilde{v}_{\alpha ,\varepsilon
}+\sum_{j}\left( \tilde{v}_{\alpha ,\varepsilon }\right) _{j}\nabla \bar{u}%
_{j}+\nabla \tilde{\pi}_{\alpha ,\varepsilon }=0,  \notag \\
\tilde{v}_{\alpha ,\varepsilon }=& \left( 1-\alpha ^{2}\Delta \right) \tilde{%
u}_{\alpha ,\varepsilon }.  \notag
\end{align}%
This evolution equation is similar to the Euler-$\alpha $ equations.
Moreover, if $\tilde{v}_{\alpha ,\varepsilon }\left( x,t\right) $ is the
solution of the equation \eqref{eq:v_tilda} with initial data $\tilde{v}%
_{\varepsilon }^{in}$, then $v_{\alpha ,\varepsilon }\left( x,t\right) =%
\tilde{v}_{\alpha ,\varepsilon }\left( x,t\right) +\bar{v}\left( x\right) $
is the solution of the 2D Euler-$\alpha $ equations \eqref{grp:EulerAlphaEq}
with initial data $v_{\varepsilon }^{in}=$ $K\ast q_{\varepsilon }^{in}.$

Similarly to the Euler case (see, e.g., \cite{b_MB02}) this equation has a
unique global infinitely smooth solution, since, as in 2D Euler case, we
have an \textit{a priori} uniform control over the $L^{\infty }$ norm of the
$\tilde{q}_{\alpha ,\varepsilon }$, which implies the global existence, as
in the proof of the Beale-Kato-Majda criterion \cite{a_BKM84}. The solution $%
\tilde{v}_{\alpha ,\varepsilon }$ is in $C^{1}\left( \left[ 0,\infty \right)
,H^{s}\left( \mathbb{R} ^{2}\right) \right) $ for all $s>2$, and hence, by
Sobolev embedding theorem, $\partial ^{k}\tilde{v}_{\alpha ,\varepsilon }$
and, consequently, $\partial ^{k}\tilde{u}_{\alpha ,\varepsilon }\left(
x\right) =\int_{\mathbb{R}^{2}}\frac{1}{\alpha ^{2}}\frac{1}{2\pi }%
K_{0}\left( \frac{\left\vert y\right\vert }{\alpha }\right) \tilde{v}%
_{\alpha ,\varepsilon }\left( x-y\right) dy$ are also in $C_{0}\left(
\mathbb{R}^{2}\right) $ for all $k$.

Moreover, the solution $\tilde{u}_{\alpha ,\varepsilon }$ is in $L^{\infty
}\left( \left[ 0,\infty \right) ;H^{1}\left( \mathbb{R}^{2}\right) \right) $
due to the following \textit{a priori }estimate. Taking the inner product of %
\eqref{eq:v_tilda} with $\tilde{u}_{\alpha ,\varepsilon }$ we have (omitting
the subindices $\alpha $ and $\varepsilon $)%
\begin{align*}
\frac{1}{2}\frac{d}{dt}\left( \left\vert \tilde{u}\right\vert
_{L^{2}}^{2}+\alpha ^{2}\left\vert \nabla \tilde{u}\right\vert
_{L^{2}}^{2}\right) & =\alpha ^{2}\left( \left( \bar{u}\cdot \nabla \right)
\Delta \tilde{u},\tilde{u}\right) -\sum_{j}\left( \tilde{v}_{j}\nabla \bar{u}%
_{j},\tilde{u}\right) \\
& =I_{1}-I_{2}.
\end{align*}%
Since $\diver\bar{u}=0$, for $I_{1}$ we have%
\begin{align*}
I_{1}& =-\alpha ^{2}\sum_{i,j,k}\int \bar{u}_{i}\frac{\partial ^{2}\tilde{u}%
_{j}}{\partial x_{k}^{2}}\frac{\partial }{\partial x_{i}}\tilde{u}_{j}\qquad
\\
& =\alpha ^{2}\sum_{i,j,k}\int \frac{\partial \bar{u}_{i}}{\partial x_{k}}%
\frac{\partial \tilde{u}_{j}}{\partial x_{k}}\frac{\partial \tilde{u}_{j}}{%
\partial x_{i}}+\alpha ^{2}\sum_{i,j,k}\int \bar{u}_{i}\left( \frac{\partial
^{2}}{\partial x_{i}\partial x_{k}}\tilde{u}_{j}\right) \frac{\partial }{%
\partial x_{k}}\tilde{u}_{j}.
\end{align*}%
Since the second term on the right is zero, we obtain that%
\begin{equation*}
\left\vert I_{1}\right\vert \leq C\alpha ^{2}\left\Vert \nabla \bar{u}%
\right\Vert _{L^{\infty }}\left\Vert \nabla \tilde{u}\right\Vert
_{L^{2}}^{2}.
\end{equation*}%
Now we estimate $I_{2}$
\begin{align*}
I_{2}& =\sum_{i,j,k}\int \tilde{u}_{j}\nabla \bar{u}_{j}\cdot \tilde{u}%
-\alpha ^{2}\int \Delta \tilde{u}_{j}\nabla \bar{u}_{j}\cdot \tilde{u} \\
& =I_{2_{1}}+I_{2_{2}}.
\end{align*}%
We have%
\begin{equation*}
\left\vert I_{2_{1}}\right\vert \leq C\left\Vert \tilde{u}\right\Vert
_{L^{2}}^{2}\left\Vert \nabla \bar{u}\right\Vert _{L^{\infty }}
\end{equation*}%
and%
\begin{equation*}
I_{2_{2}}=\alpha ^{2}\sum_{i,j,k}\int \frac{\partial \tilde{u}_{j}}{\partial
x_{k}}\frac{\partial \bar{u}_{j}}{\partial x_{i}}\frac{\partial \tilde{u}_{i}%
}{\partial x_{k}}+\alpha ^{2}\sum_{i,j,k}\int \frac{\partial \tilde{u}_{j}}{%
\partial x_{k}}\frac{\partial ^{2}\bar{u}_{j}}{\partial x_{k}\partial x_{i}}%
\tilde{u}_{i},
\end{equation*}%
hence%
\begin{equation*}
\left\vert I_{2_{2}}\right\vert \leq C\alpha ^{2}\left\Vert \nabla \tilde{u}%
\right\Vert _{L^{2}}^{2}\left\Vert \nabla \bar{u}\right\Vert _{L^{\infty
}}+C\alpha ^{2}\left\Vert \nabla \tilde{u}\right\Vert _{L^{2}}\left\Vert
\tilde{u}\right\Vert _{L^{2}}\left\Vert \partial ^{2}\bar{u}\right\Vert
_{L^{\infty }}.
\end{equation*}%
To conclude, we obtain%
\begin{equation*}
\frac{1}{2}\frac{d}{dt}\left( \left\Vert \tilde{u}\right\Vert
_{L^{2}}^{2}+\alpha ^{2}\left\Vert \nabla \tilde{u}\right\Vert
_{L^{2}}^{2}\right) \leq C\left( \alpha ^{2}\left\Vert \nabla \bar{u}%
\right\Vert _{L^{\infty }}\left\Vert \nabla \tilde{u}\right\Vert
_{L^{2}}^{2}+\left\Vert \tilde{u}\right\Vert _{L^{2}}^{2}\left\Vert \nabla
\bar{u}\right\Vert _{L^{\infty }}+\alpha \left\Vert \nabla \tilde{u}%
\right\Vert _{L^{2}}\left\Vert \tilde{u}\right\Vert _{L^{2}}\alpha
\left\Vert \partial ^{2}\bar{u}\right\Vert _{L^{\infty }}\right) .
\end{equation*}%
Hence, thanks to \eqref{eq:u_bar_bounds},
\begin{equation*}
\frac{1}{2}\frac{d}{dt}\left( \left\Vert \tilde{u}\right\Vert
_{L^{2}}^{2}+\alpha ^{2}\left\Vert \nabla \tilde{u}\right\Vert
_{L^{2}}^{2}\right) \leq C\left\Vert \nabla \bar{v}\right\Vert _{L^{\infty
}}\left( \alpha ^{2}\left\Vert \nabla \tilde{u}\right\Vert
_{L^{2}}^{2}+\left\Vert \tilde{u}\right\Vert _{L^{2}}^{2}\right) ,
\end{equation*}%
and by Gr\"{o}nwall inequality%
\begin{align*}
\left\Vert \tilde{u}\left( \cdot ,t\right) \right\Vert _{L^{2}}^{2}+\alpha
^{2}\left\Vert \nabla \tilde{u}\left( \cdot ,t\right) \right\Vert
_{L^{2}}^{2}& \leq e^{C\left\Vert \nabla \bar{v}\right\Vert _{L^{\infty
}}t}\left( \left\Vert \tilde{u}\left( \cdot ,0\right) \right\Vert
_{L^{2}}^{2}+\alpha ^{2}\left\Vert \nabla \tilde{u}\left( \cdot ,0\right)
\right\Vert _{L^{2}}^{2}\right) \\
& \leq e^{C\left\Vert \nabla \bar{v}\right\Vert _{L^{\infty }}t}\left\Vert
\tilde{v}\left( \cdot ,0\right) \right\Vert _{L^{2}}^{2}.
\end{align*}%
Hence we have that for all $0<\varepsilon \leq \varepsilon _{0}$, $0\leq
t\leq T$, the solution of Euler-$\alpha $ equations with the smoothed
initial data satisfies (we now put back the subindices $\alpha $ and $%
\varepsilon $)
\begin{align*}
\left\Vert u_{\alpha ,\varepsilon }\left( \cdot ,t\right) \right\Vert
_{L^{2}\left( B\left( x_{0},1\right) \right) }& \leq \left\Vert \tilde{u}%
_{\alpha ,\varepsilon }\left( \cdot ,t\right) \right\Vert _{L^{2}\left(
B\left( x_{0},1\right) \right) }+\left\Vert \bar{u}\right\Vert _{L^{2}\left(
B\left( x_{0},1\right) \right) } \\
& \leq \left\Vert \tilde{u}_{\alpha ,\varepsilon }\left( \cdot ,t\right)
\right\Vert _{L^{2}\left( \mathbb{R}^{2}\right) }+\pi \left\Vert \bar{u}%
\right\Vert _{L^{\infty }\left( \mathbb{R}^{2}\right) } \\
& \leq C(T),
\end{align*}%
where $C(T)=C\left( \left\Vert q^{in}\right\Vert _{\mathcal{M}},\left\Vert
q^{in}\right\Vert _{H^{-1}},\left\Vert \bar{q}\right\Vert _{L^{\infty
}},\varepsilon _{0},R_{0}\right) e^{C\left\Vert \nabla \bar{v}\right\Vert
_{L^{\infty }}t}+\pi \left\Vert \bar{u}\right\Vert _{L^{\infty }\left(
\mathbb{R}^{2}\right) }$. This is enough to show uniform decay of the
vorticity $\omega _{\alpha ,\varepsilon }$ in small disks (see \cite{a_S95},
we remark that here the fixed sign of the vorticity comes into place\footnote{%
In \cite{a_S95} to prove the uniform decay of the vorticity in small circles
one defines for $R<1$%
\begin{equation*}
\mathcal{\delta }_{R}\left( x\right) =\left\{
\begin{array}{cc}
1 & \left\vert x\right\vert \leq R, \\
\frac{\log \left( \sqrt{R}/\left\vert x\right\vert \right) }{\log \left( 1/%
\sqrt{R}\right) } & R\leq \left\vert x\right\vert \leq \sqrt{R}, \\
0 & \left\vert x\right\vert \geq \sqrt{R}.%
\end{array}%
\right.
\end{equation*}%
Then $\left\vert \nabla \mathcal{\delta }_{R}\right\vert _{L^{2}}\leq
C\left\vert \log R\right\vert ^{-1/2}$ . We have%
\begin{align*}
\int_{\left\vert x-x_{0}\right\vert \leq R}\omega _{\alpha ,\varepsilon
}\left( x,t\right) dx& \leq \int_{\mathbb{R}^{2}}\mathcal{\delta }_{R}\left(
x-x_{0}\right) \omega _{\alpha ,\varepsilon }\left( x,t\right) dx \\
& \leq \left\vert \int_{\mathbb{R}^{2}}\nabla ^{\perp }\mathcal{\delta }%
_{R}\left( x-x_{0}\right) u_{\alpha ,\varepsilon }\left( x,t\right)
dx\right\vert \\
& \leq \left\vert \nabla \mathcal{\delta }_{R}\right\vert _{L^{2}}\left\vert
u_{\alpha ,\varepsilon }\left( \cdot ,t\right) \right\vert _{L^{2}\left(
B\left( x_{0},1\right) \right) } \\
& \leq C\left( T\right) \left\vert \log R\right\vert ^{-1/2}.
\end{align*}%
Here in the second transaction we used the fact that $\omega _{\alpha
,\varepsilon }\geq 0$.} 
): for $R<1$, $\varepsilon \leq \varepsilon _{0}$
\begin{equation*}
\sup_{0\leq t\leq T}\int_{\left\vert x-x_{0}\right\vert \leq R}\omega
_{\alpha ,\varepsilon }\left( x,t\right) dx\leq C\left( T\right) \left\vert
\log R\right\vert ^{-1/2}.
\end{equation*}%
By \eqref{eq:omega_M_norm_bound} $\left\Vert \omega _{\alpha ,\varepsilon
}\left( \cdot ,t\right) \right\Vert _{\mathcal{M}}\leq \left\Vert q_{\alpha
,\varepsilon }\left( \cdot ,t\right) \right\Vert _{\mathcal{M}}=\left\Vert
q_{\varepsilon }^{in}\right\Vert _{\mathcal{M}}=\left\Vert q^{in}\right\Vert
_{\mathcal{M}}$, hence there exists a subsequence $\omega _{\alpha
,\varepsilon _{j}}$ which converges weak-$\ast $ in $L^{\infty }\left( \left[
0,T\right] ,\mathcal{M}\left( \mathbb{R}^{2}\right) \right) $ to the limit $%
\omega _{\alpha }$. This limit has a similar decay%
\begin{equation*}
\sup_{0\leq t\leq T}\int_{\left\vert x-x_{0}\right\vert <R}d\omega _{\alpha
}\left( x,t\right) \leq \liminf_{\varepsilon _{j}\rightarrow 0}\sup_{0\leq
t\leq T}\int_{\left\vert x-x_{0}\right\vert <R}\omega _{\alpha ,\varepsilon
_{j}}\left( x,t\right) dx\leq C\left( T\right) \left\vert \log R\right\vert
^{-1/2}.
\end{equation*}%
Furthermore, $q_{\alpha }=\left( 1-\alpha ^{2}\Delta \right) \omega
_{\alpha }$ is the solution of the Euler-$\alpha $ equations
\eqref{grp:Euler_alpha_vortForm}, the passing to the limit in $%
\lim_{\varepsilon _{j}\rightarrow 0}W^{\alpha }\left( q_{\alpha ,\varepsilon
_{j}};\psi \right) =W^{\alpha }\left( q_{\alpha };\psi \right) $ is
straightforward since $H_{\psi }^{\alpha }$ $\in C\left( \left[ 0,T\right]
,\left( C_{0}\left( \mathbb{R}^{2}\right) \right) ^{2}\right) $ and $%
q_{\alpha ,\varepsilon _{j}}$ are equicontinuous in time with values in a
negative Sobolev space $W^{-2,1}$ (which, together with $q_{\alpha
,\varepsilon _{j}}\overset{\ast }{\rightharpoonup }q_{\alpha }$ in $%
L^{\infty }\left( \left[ 0,T\right] ,\mathcal{M}\left( \mathbb{R}^{2}\right)
\right) $, implies $q_{\alpha ,\varepsilon _{j}}\left( x,t\right) q_{\alpha
,\varepsilon _{j}}\left( y,t\right) \overset{\ast }{\rightharpoonup }%
q_{\alpha }\left( x,t\right) q_{\alpha }\left( y,t\right) $ in $L^{\infty
}\left( \left[ 0,T\right] ,\mathcal{M}\left( \mathbb{R}^{2}\right) \right) $%
, see \cite[Lemma 3.2]{a_S95}). The equicontinuity follows from the fact
that $\left\vert x\right\vert D\Psi ^{\alpha }\left( \left\vert x\right\vert
\right) $ is bounded (in fact, it is bounded independent of $\alpha $) and
hence we have for all $\psi \in C_{c}^{\infty }\left( \mathbb{R}^{2}\times
\left( 0,T\right) \right) $
\begin{align}
& \left\vert \int_{0}^{T}\int_{\mathbb{R}^{2}}\partial _{t}\psi \left(
x,t\right) q_{\alpha ,\varepsilon _{j}}\left( x,t\right) dxdt\right\vert =
\label{eq:equicont} \\
& =\left\vert \frac{1}{2}\int_{0}^{T}\int_{\mathbb{R}^{2}}\int_{\mathbb{R}%
^{2}}D\Psi ^{\alpha }\left( \left\vert x-y\right\vert \right) \frac{\left(
x-y\right) ^{\perp }\cdot \left( \nabla \psi \left( x,t\right) -\nabla \psi
\left( y,t\right) \right) }{\left\vert x-y\right\vert }q_{\alpha
,\varepsilon _{j}}\left( x,t\right) q_{\alpha ,\varepsilon _{j}}\left(
y,t\right) dxdydt\right\vert  \notag \\
& \leq \frac{1}{2}\left\Vert \left\vert x-y\right\vert D\Psi ^{\alpha
}\left( x-y\right) \right\Vert _{L^{\infty }}\int_{0}^{T}\left\Vert
D^{2}\psi \left( \cdot ,t\right) \right\Vert _{L^{\infty }\left( \mathbb{R}%
^{2}\right) }\int_{\mathbb{R}^{2}}q_{\alpha ,\varepsilon _{j}}\left(
x,t\right) dx\int_{\mathbb{R}^{2}}q_{\alpha ,\varepsilon _{j}}\left(
y,t\right) dydt  \notag \\
& \leq C\left\Vert q^{in}\right\Vert _{\mathcal{M}}^{2}\left\Vert \psi
\right\Vert _{L^{1}\left( \left[ 0,T\right] ,W^{2,\infty }\left( \mathbb{R}%
^{2}\right) \right) }  \notag \\
& \leq C\left\Vert q^{in}\right\Vert _{\mathcal{M}}^{2}\left\Vert \psi
\right\Vert _{L^{1}\left( \left[ 0,T\right] ,H^{4}\left( \mathbb{R}%
^{2}\right) \right) },  \notag
\end{align}%
where in the last inequality we used the Sobolev embedding theorem. Hence $%
\partial _{t}q_{\alpha ,\varepsilon _{j}}$ are uniformly bounded in $%
L^{\infty }\left( \left[ 0,T\right] ,H^{-4}\left( \mathbb{R}^{2}\right)
\right) $, and hence $q_{\alpha ,\varepsilon _{j}}$ are uniformly bounded in
$Lip\left( \left[ 0,T\right] ;H^{-4}\left( \mathbb{R}^{2}\right) \right) $.
\end{proof}

We also need the following result

\begin{lemma}
\label{lemma:laplacianOmegaNorm}Let $q$ be a finite Radon measure, $q=\left(
1-\alpha ^{2}\Delta \right) \omega ,$ as defined in \eqref{eq:omega_from_q}-%
\eqref{eq:laplacian_omega_from_q}, then
\begin{equation*}
\int_{\mathbb{R}^{2}}d\left\vert \alpha ^{2}\Delta \omega \right\vert \leq
C\alpha \left\Vert q\right\Vert _{\mathcal{M}}.
\end{equation*}
\end{lemma}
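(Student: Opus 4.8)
The plan is to make the measure $\alpha^2\Delta\omega$ completely explicit and then estimate its total variation by duality. The operator identity already recorded before the statement, $\alpha^2\Delta(1-\alpha^2\Delta)^{-1}\varphi=(1-\alpha^2\Delta)^{-1}\varphi-\varphi$, shows that as a functional $\alpha^2\Delta\omega=\omega-q$; and since, by \eqref{eq:u_from_v}, $\omega=(1-\alpha^2\Delta)^{-1}q=G^{\alpha}\ast q$ with the nonnegative kernel $G^{\alpha}(x)=\frac{1}{\alpha^{2}}\frac{1}{2\pi}K_{0}(|x|/\alpha)$ satisfying $\int_{\mathbb{R}^{2}}G^{\alpha}=1$, I would write $\alpha^{2}\Delta\omega=(G^{\alpha}-\delta_{0})\ast q$. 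Thus $\int d|\alpha^{2}\Delta\omega|$ is the total variation of the convolution of $q$ with the signed, mean-zero kernel $G^{\alpha}-\delta_{0}$, and the whole point of the estimate is to extract a factor $\alpha$ from the concentration of $G^{\alpha}$ at scale $\alpha$, improving on the crude bound $\|\alpha^{2}\Delta\omega\|_{\mathcal{M}}\le 2\|q\|_{\mathcal{M}}$ derived earlier.

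The quantitative engine I would isolate is the first absolute moment of the kernel. By the rescaling $y=\alpha z$,
\[
\int_{\mathbb{R}^{2}}G^{\alpha}(y)\,|y|\,dy=\alpha\int_{\mathbb{R}^{2}}\frac{1}{2\pi}K_{0}(|z|)\,|z|\,dz=\alpha\int_{0}^{\infty}K_{0}(r)\,r^{2}\,dr=C\alpha ,
\]
the constant being finite because $K_{0}(r)\sim-\log r$ as $r\to 0^{+}$ and decays exponentially at infinity. Representing the total variation by duality, $\int d|\alpha^{2}\Delta\omega|=\sup\{\langle\alpha^{2}\Delta\omega,\varphi\rangle:\varphi\in C_{0}(\mathbb{R}^{2}),\ \|\varphi\|_{L^{\infty}}\le 1\}$, and using the representation $\langle\alpha^{2}\Delta\omega,\varphi\rangle=\int_{\mathbb{R}^{2}}\int_{\mathbb{R}^{2}}G^{\alpha}(y)\,[\varphi(x-y)-\varphi(x)]\,dy\,dq(x)$ (valid since $\int G^{\alpha}=1$), the increment estimate $|\varphi(x-y)-\varphi(x)|\le\|\nabla\varphi\|_{L^{\infty}}|y|$ combines with the moment bound to give $|\langle\alpha^{2}\Delta\omega,\varphi\rangle|\le C\alpha\,\|\nabla\varphi\|_{L^{\infty}}\,\|q\|_{\mathcal{M}}$.

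The step I expect to be the genuine obstacle is exactly the passage from this last, Lipschitz-tested inequality to the total-variation norm as written, since the supremum defining $\int d|\alpha^{2}\Delta\omega|$ ranges over all $\varphi$ with merely $\|\varphi\|_{L^{\infty}}\le 1$, for which the increment $\varphi(x-y)-\varphi(x)$ carries no smallness in $|y|$ and the gained factor $\alpha$ evaporates. To close the estimate one must suppress the contribution of the part of $q$ against which admissible test functions can oscillate on scale $\lesssim\alpha$. The route I would attempt is to regularize, $q_{\delta}=\rho_{\delta}\ast q$, prove the $L^{1}$ analogue $\|G^{\alpha}\ast q_{\delta}-q_{\delta}\|_{L^{1}}\le\|\nabla q_{\delta}\|_{L^{1}}\int G^{\alpha}(y)|y|\,dy=C\alpha\|\nabla q_{\delta}\|_{L^{1}}$ for the smooth densities, and then pass $\delta\to 0$. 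The hard part is that $\|\nabla q_{\delta}\|_{L^{1}}$ is not controlled by $\|q\|_{\mathcal{M}}$ for a general finite measure, so the limit cannot rest on the total mass alone and must instead exploit additional structure of $q$ — in the present application the absence of atoms, equivalently the $H^{-1}_{loc}$-regularity enjoyed by the filtered vorticities $\omega_{\alpha}$ — to guarantee that the singular part of $q$ does not produce an $O(1)$ obstruction. I would therefore concentrate the effort precisely on showing that, under such a hypothesis, the concentrated part of $q$ contributes negligibly, which is the crux of recovering the factor $\alpha$ in the full total-variation norm.
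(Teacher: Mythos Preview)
Your Lipschitz-tested bound $|\langle\alpha^2\Delta\omega,\varphi\rangle|\le C\alpha\,\|\nabla\varphi\|_{L^\infty}\,\|q\|_{\mathcal{M}}$ is exactly the estimate driving the paper's proof, recorded there as the operator bound $\|\alpha\Delta(1-\alpha^2\Delta)^{-1}\theta\|_{L^\infty}\le C\|\nabla\theta\|_{L^\infty}$. The paper's route to the total variation is to fix a smooth cutoff $\theta$ with $\theta\equiv 1$ on a given compact $K$ and $\|\nabla\theta\|_{L^\infty}$ bounded by an absolute constant, assert
\[
|\alpha^2\Delta\omega|(K)\;\le\;\int_{\mathbb{R}^2}\theta\,d|\alpha^2\Delta\omega|\;\le\;\int_{\mathbb{R}^2}\bigl|\alpha^2\Delta(1-\alpha^2\Delta)^{-1}\theta\bigr|\,d|q|\;\le\; C\alpha\,\|\nabla\theta\|_{L^\infty}\,\|q\|_{\mathcal{M}},
\]
and then invoke inner regularity of the Radon measure $|\alpha^2\Delta\omega|$.

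The obstacle you flag is real, and it sits precisely in the second inequality of that chain. The defining identity $\langle\alpha^2\Delta\omega,\varphi\rangle=\int(T\varphi)\,dq$, with $T=\alpha^2\Delta(1-\alpha^2\Delta)^{-1}$, controls the pairing $\langle\alpha^2\Delta\omega,\theta\rangle$, not $\int\theta\,d|\alpha^2\Delta\omega|=\sup\{\langle\alpha^2\Delta\omega,\varphi\rangle:\varphi\in C_0,\ |\varphi|\le\theta\}$, and the competing $\varphi$ in that supremum carry no gradient bound. In fact the lemma as stated is false for general finite Radon measures. Take $q=\delta_0$: then $\omega=G^{\alpha}$ and $\alpha^2\Delta\omega=G^{\alpha}-\delta_0$; since $G^{\alpha}\ge 0$ lies in $L^1$ while $\delta_0$ is singular with respect to Lebesgue measure, the two pieces are mutually singular and $\int d|\alpha^2\Delta\omega|=\|G^{\alpha}\|_{L^1}+1=2$ for every $\alpha>0$. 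The same computation yields $\int d|\alpha^2\Delta\omega|=2\|q\|_{\mathcal{M}}$ whenever $q\ge 0$ is singular with respect to Lebesgue measure --- in particular for vortex-sheet data supported on a curve. So the paper does not in fact circumvent the obstruction you identify; your instinct that extra regularity of $q$ must enter is correct, and neither your regularization scheme nor the paper's cutoff argument supplies it.
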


\begin{proof}
For the theory of Radon measures, see, e.g., \cite{b_F99}. First, we show
that for all compact $K\subset \mathbb{R}^{2}$%
\begin{equation*}
\left\vert \alpha ^{2}\Delta \omega \right\vert \left( K\right) \leq C\alpha
\left\Vert q\right\Vert _{\mathcal{M}}.
\end{equation*}%
By Riesz representation theorem (see, e.g., \cite[Chapter 7]{b_F99} )
\begin{equation*}
\left\vert \alpha ^{2}\Delta \omega \right\vert \left( K\right) =\inf
\left\{ \int_{\mathbb{R}^{2}}fd\left\vert \alpha ^{2}\Delta \omega
\right\vert :f\in C_{c}\left( \mathbb{R}^{2}\right) ,f\geq \chi _{K}\right\}
.
\end{equation*}%
Let $R$ be such that $K\subset B\left( 0,R\right) $, take $\theta \in
C_{c}^{\infty }(\mathbb{R}^{2})$ with $0\leq \theta (x)\leq 1$ for all $x$,
with $\theta (x)=1$ if $|x|\leq R$, $\theta (x)=0$ if $|x|\geq R+1$. For
example, $\theta =\chi _{B\left( 0,R+1/2\right) }\ast \rho ^{\varepsilon
=1/4}$. Then by \eqref{eq:omega_from_q} and using that $\left\Vert \alpha
\Delta \left( 1-\alpha ^{2}\Delta \right) ^{-1}\theta \right\Vert
_{L^{\infty }}\leq C\left\Vert \nabla \theta \right\Vert _{L^{\infty }}$
(see, e.g., \eqref{eq:u_bar_bounds}), we have
\begin{align*}
\left\vert \alpha ^{2}\Delta \omega \right\vert \left( K\right) & \leq \int_{%
\mathbb{R}^{2}}\theta d\left\vert \alpha ^{2}\Delta \omega \right\vert \\
& \leq \int_{\mathbb{R}^{2}}\left\vert \alpha ^{2}\Delta \left( 1-\alpha
^{2}\Delta \right) ^{-1}\theta \right\vert d\left\vert q\right\vert \\
& \leq C\alpha \left\Vert q\right\Vert _{\mathcal{M}}\left\Vert \nabla
\theta \right\Vert _{L^{\infty }} \\
& \leq C\alpha \left\Vert q\right\Vert _{\mathcal{M}}.
\end{align*}%
Now, since a Radon measure is inner regular we have
\begin{align*}
\left\vert \alpha ^{2}\Delta \omega \right\vert \left( \mathbb{R}^{2}\right)
& =\sup \left\{ \left\vert \alpha ^{2}\Delta \omega \right\vert \left(
K\right) :K\subset \mathbb{R}^{2},K\text{ compact}\right\} \\
& \leq C\alpha \left\Vert q\right\Vert _{\mathcal{M}}.
\end{align*}
\end{proof}

Now we are ready to prove Theorem \ref{thm:Convergence}. We notice that
\eqref{eq:equicont} implies that $q_{\alpha }\in Lip\left( \left[ 0,T\right]
;H^{-4}\left( \mathbb{R}^{2}\right) \right) $. Hence due to
\eqref{eq:Mspace_vort_bound} there exists a subsequence, that we relabel as $%
q_{\alpha }$, such that $q_{\alpha }\rightharpoonup q$ weak-$\ast $ in $%
L^{\infty }\left( \left[ 0,T\right] ,\mathcal{M}\left( \mathbb{R}^{2}\right)
\right) $ and in $\mathcal{M}\left( \mathbb{R}^{2}\right) $ for each fixed $%
t $, as $\alpha \rightarrow 0$. Also, due to $q_{\alpha }\in Lip\left( \left[
0,T\right] ;H^{-4}\left( \mathbb{R}^{2}\right) \right) $ and $q_{\alpha
}\rightharpoonup q$ weak-$\ast $ in $L^{\infty }\left( \left[ 0,T\right] ,%
\mathcal{M}\left( \mathbb{R}^{2}\right) \right) $, we have that $q_{\alpha
}\left( x,t\right) q_{\alpha }\left( y,t\right) \rightharpoonup q\left(
x,t\right) q\left( y,t\right) $ weak-$\ast $ both in $L^{\infty }\left( %
\left[ 0,T\right] ,\mathcal{M}\left( \mathbb{R}^{2}\right) \right) $ and in $%
\mathcal{M}\left( \mathbb{R}^{2}\right) $ for each fixed $t$ $\in \left[ 0,T%
\right] $, as $\alpha \rightarrow 0$ (see \cite[Lemma 3.2]{a_S95}).

Since $q_{\alpha }$ is uniformly bounded in $\mathcal{M}(\mathbb{R} ^{2})$
and $Lip\left( \left[ 0,T\right] ;H^{-4}\left( \mathbb{R} ^{2}\right)
\right) $ (by \eqref{eq:equicont}), $\mathcal{M}(\mathbb{R}
^{2})\hookrightarrow H_{loc}^{-s}\left( \mathbb{R} ^{2}\right) \overset{comp}%
{\hookrightarrow }$ $H_{loc}^{-4}\left( \mathbb{R} ^{2}\right) $ for $1<s<4$%
, then by Arzela-Ascoli theorem there is a subsequence of $q_{\alpha }$ that
converges to some $\bar{q}$ in $C\left( \left[ 0,T\right] ;H_{loc}^{-4}%
\right) $, and hence $\bar{q}$ is also in $Lip\left( \left[ 0,T\right]
;H_{loc}^{-4}\right) $. Applying both types of convergence of the $q_{\alpha
}$ to the integral $\int_{0}^{T}\int_{\mathbb{R} ^{2}}\psi \left( t\right)
\varphi \left( x\right) dq_{\alpha }\left( x,t\right) $ for every $\psi \in
C_{c}\left( \left[ 0,T\right] \right) ,\varphi \in C_{c}\left( \mathbb{R}%
^{2}\right) $ shows that $\bar{q}=q$, and hence the limit $q$ belongs to $%
Lip\left( \left[ 0,T\right] ,H_{loc}^{-4}\left( \mathbb{R} ^{2}\right)
\right) $ as well.

We observe that $\omega _{\alpha }\left( t,\cdot \right) $ also weak-$\ast $
converges to $q$ in $\mathcal{M}\left( \mathbb{R}^{2}\right) $ for every $%
t\in \left[ 0,T\right] $, as $\alpha \rightarrow 0$. Indeed, let $\varphi
\in C_{c}\left( \mathbb{R}^{2}\right) $ then%
\begin{align*}
\left\vert \int_{\mathbb{R}^{2}}\varphi \left( x\right) dq\left( x,t\right)
-\int_{\mathbb{R}^{2}}\varphi \left( x\right) d\omega _{\alpha }\left(
x,t\right) \right\vert & \leq \left\vert \int_{\mathbb{R}^{2}}\varphi \left(
x\right) dq\left( x,t\right) -\int_{\mathbb{R}^{2}}\varphi \left( x\right)
dq_{\alpha }\left( x,t\right) \right\vert \\
& +\left\vert \int_{\mathbb{R}^{2}}\varphi \left( x\right) dq_{\alpha
}\left( x,t\right) -\int_{\mathbb{R}^{2}}\varphi \left( x\right) d\omega
_{\alpha }\left( x,t\right) \right\vert
\end{align*}%
the first term on the right-hand side converges to $0$, since $q_{\alpha }%
\overset{\ast }{\rightharpoonup }q$ in $\mathcal{M}\left( \mathbb{R}%
^{2}\right) $, as $\alpha \rightarrow 0$, and the second term is equal to $%
\left\vert \int_{\mathbb{R}^{2}}\varphi d\left( \alpha ^{2}\Delta \omega
_{\alpha }\right) \right\vert \leq \left\Vert \varphi \right\Vert
_{L^{\infty }}\int_{\mathbb{R}^{2}}d\left\vert \alpha ^{2}\Delta \omega
\right\vert \rightarrow 0$, as $\alpha \rightarrow 0$, due to Lemma \ref%
{lemma:laplacianOmegaNorm}. Hence also $q$ decays in small disks, that is,
for all $0\leq t\leq T$, $0<R<1$ and $x_{0}\in {\mathbb{R}}^{2}$
\begin{equation}
\int_{\left\vert x-x_{0}\right\vert <R}dq\left( x,t\right) \leq
\liminf_{\alpha \rightarrow 0}\int_{\left\vert x-x_{0}\right\vert <R}d\omega
_{\alpha }\left( x,t\right) \leq C\left( T\right) \left\vert \log
R\right\vert ^{-1/2}.  \label{eq:q_decay_in_small_circles}
\end{equation}

Next we show that $q$ is a weak solution of the Euler equations %
\eqref{eq:EulerWeakVortForm}, namely, for every test function $\psi \in
C_{c}^{\infty }\left( \mathbb{R} ^{2}\times \left( 0,T\right) \right) $
\begin{equation*}
W\left( q;\psi \right) =\lim_{\alpha \rightarrow 0}W^{\alpha }\left(
q_{\alpha };\psi \right) =0\text{.}
\end{equation*}%
The convergence of the linear term is obvious from the weak-$\ast $
convergence $q_{\alpha }\rightharpoonup q$ in $L^{\infty }\left( \left[ 0,T%
\right] ;\mathcal{M}({\mathbb{R}}^{2})\right) $, as $\alpha \rightarrow 0$.
Hence we need to show the convergence for the non-linear term%
\begin{equation*}
\lim_{\alpha \rightarrow 0}W_{NL}^{\alpha }\left( q_{\alpha };\psi \right)
=W_{NL}\left( q;\psi \right) .
\end{equation*}%
We rewrite $W_{NL}\left( q;\psi \right) -W_{NL}^{\alpha }\left( q_{\alpha
};\psi \right) $ as%
\begin{align*}
W_{NL}\left( q;\psi \right) -W_{NL}^{\alpha }\left( q_{\alpha };\psi \right)
& =\int_{0}^{T}\int_{\mathbb{R} ^{2}}\int_{\mathbb{R} ^{2}}H_{\psi }\left(
x,y,t\right) \left[ dq\left( x,t\right) dq\left( y,t\right) -dq_{\alpha
}\left( x,t\right) dq_{\alpha }\left( y,t\right) \right] dt \\
& +\int_{0}^{T}\int_{\mathbb{R} ^{2}}\int_{\mathbb{R} ^{2}}\left( H_{\psi
}\left( x,y,t\right) -H_{\psi }^{\alpha }\left( x,y,t\right) \right)
dq_{\alpha }\left( x,t\right) dq_{\alpha }\left( y,t\right) dt \\
& =I_{1}+I_{2}.
\end{align*}%
We recall that the kernel $H_{\psi }$ is bounded by a constant times $%
\left\Vert D^{2}\psi \right\Vert _{L^{\infty }}$, tends to zero at infinity,
and it is discontinuous on the diagonal $x=y$ (see \cite{a_S95}).

Let $\theta \left( \left\vert x\right\vert \right) \in C_{c}^{\infty }(%
\mathbb{R}^{2})$ be a fixed cutoff function $0\leq \theta \leq 1$ with $%
\theta =1$ for $\left\vert x\right\vert \leq 1$ and $\theta $ $=0$ for $%
\left\vert x\right\vert \geq 2$. Let $0<\delta <1$. Write $I_{1}$ as%
\begin{align*}
I_{1}& =\int_{0}^{T}\int_{\mathbb{R}^{2}}\int_{\mathbb{R}^{2}}\left[
1-\theta \left( \frac{\left\vert x-y\right\vert }{\delta }\right) \right]
H_{\psi }\left( x,y,t\right) \left( dq\left( x,t\right) dq\left( y,t\right)
-dq_{\alpha }\left( x,t\right) dq_{\alpha }\left( y,t\right) \right) dt \\
& +\int_{0}^{T}\int_{\mathbb{R}^{2}}\int_{\mathbb{R}^{2}}\theta \left( \frac{%
\left\vert x-y\right\vert }{\delta }\right) H_{\psi }\left( x,y,t\right)
\left( dq\left( x,t\right) dq\left( y,t\right) -dq_{\alpha }\left(
x,t\right) dq_{\alpha }\left( y,t\right) \right) dt \\
& =I_{11}+I_{12}.
\end{align*}%
Since $\left[ 1-\theta \left( \frac{\left\vert x-y\right\vert }{\delta }%
\right) \right] H_{\psi }\in C\left( \left[ 0,T\right] ,\left( C_{0}\left(
\mathbb{R}^{2}\right) \right) ^{2}\right) $ and $q_{\alpha }\left(
x,t\right) q_{\alpha }\left( y,t\right) \rightharpoonup q\left( x,t\right)
q\left( y,t\right) $ weak-$\ast $ in $L^{\infty }\left( \left[ 0,T\right] ,%
\mathcal{M}\left( \mathbb{R}^{2}\right) \right) $ as $\alpha \rightarrow 0$,
then $\lim_{\alpha \rightarrow 0}I_{11}=0$. Now we estimate $I_{12}$
\begin{align*}
\left\vert I_{12}\right\vert & \leq
\int_{0}^{T}\int_{\mathbb{R}^{2}} \int_{\left\vert x-y\right\vert
<2\delta }\left\vert H_{\psi }\left( x,y,t\right) \right\vert
dq\left( x,t\right) dq\left( y,t\right) dt \\
& +\int_{0}^{T}\int_{\mathbb{R}^{2}} \int_{\left\vert x-y\right\vert
<2\delta }\left\vert H_{\psi }\left( x,y,t\right) \right\vert
dq_{\alpha }\left( x,t\right)
dq_{\alpha }\left( y,t\right) dt \\
& =I_{121}+I_{122}.
\end{align*}%
For $I_{121}$, due to uniform decay of the vorticity $q$ in small disks %
\eqref{eq:q_decay_in_small_circles}, we have for $2\delta <1$
\begin{align*}
I_{121}& \leq \left\vert H_{\psi }\right\vert _{L^{\infty
}}\int_{0}^{T}\int_{\mathbb{R}^{2}}\int_{B\left( y,2\delta \right) }dq\left(
x,t\right) dq\left( y,t\right) dt \\
& \leq C\left( T\right) \left\vert \log 2\delta \right\vert
^{-1/2}\left\Vert q^{in}\right\Vert _{\mathcal{M}}.
\end{align*}%
To estimate $I_{122}$ we use \eqref{eq:uniform_decay} (for $2\delta <1$) and
Lemma \ref{lemma:laplacianOmegaNorm}.
\begin{align*}
I_{122}& \leq \left\vert H_{\psi }\right\vert _{L^{\infty
}}\int_{0}^{T}\int_{\mathbb{R}^{2}} \int_{\left\vert x-y\right\vert
<2\delta }d\left( \left( 1-\alpha ^{2}\Delta \right) \omega _{\alpha
}\right) \left( x,t\right) d\left( \left( 1-\alpha
^{2}\Delta \right) \omega _{\alpha }\right) \left( y,t\right) )dt \\
& =\left\vert H_{\psi }\right\vert _{L^{\infty
}}\int_{0}^{T}\int_{\mathbb{R}^{2}} \int_{\left\vert x-y\right\vert
<2\delta }d\omega _{\alpha }\left(
x,t\right) d\omega _{\alpha }\left( y,t\right) )dt \\
& +\left\vert H_{\psi }\right\vert _{L^{\infty }}\int_{0}^{T}\left( 2\int_{%
\mathbb{R}^{2}}d\omega _{\alpha }\left( x,t\right) \int_{\mathbb{R}%
^{2}}d\left\vert \alpha ^{2}\Delta \omega _{\alpha }\left( x,t\right)
\right\vert +\left( \int_{\mathbb{R}^{2}}d\left\vert \alpha ^{2}\Delta
\omega _{\alpha }\left( x,t\right) \right\vert \right) ^{2}\right) dt \\
& \leq C\left( T\right) \left\vert \log 2\delta \right\vert
^{-1/2}\left\Vert q^{in}\right\Vert _{\mathcal{M}}^{2}+\alpha \left(
1+\alpha \right) CT\left\Vert q^{in}\right\Vert _{\mathcal{M}}^{2}.
\end{align*}%
Thus, $I_{12}\rightarrow 0$, as $\delta $ and $\alpha $ converge to zero.

It remains to estimate $I_{2}$, by \eqref{eq:kernel_H}, %
\eqref{eq:kernel_H_alpha} and \eqref{eq:Psi_DPsi_def}%
\begin{equation*}
I_{2}=\frac{1}{4\pi }\int_{0}^{T}\int_{\mathbb{R}^{2}}\int_{\mathbb{R}^{2}}%
\frac{1}{\alpha }K_{1}\left( \frac{\left\vert x-y\right\vert }{\alpha }%
\right) \left( x-y\right) ^{\perp }\frac{\nabla \left( \psi \left(
x,t\right) -\psi \left( y,t\right) \right) }{\left\vert x-y\right\vert }%
dq_{\alpha }\left( x,t\right) dq_{\alpha }\left( y,t\right) dt.
\end{equation*}%
Now, for $\frac{r}{\alpha }\rightarrow \infty $ $\frac{r}{\alpha }%
K_{1}\left( \frac{r}{\alpha }\right) \leq C\left( \frac{\pi }{2}\right)
^{1/2}\frac{\sqrt{\frac{r}{\alpha }}}{e^{\frac{r}{\alpha }}}\rightarrow 0$
\cite{b_W44}. Hence, for each $\varepsilon >0$, there is an $L$ large
enough, depending on $\varepsilon $, such that $\frac{r}{\alpha }K_{1}\left(
\frac{r}{\alpha }\right) <\varepsilon $, whenever $\frac{r}{\alpha }\geq L$.
Write $I_{2}$ as%
\begin{align*}
I_{2}& =\frac{1}{4\pi }\int_{0}^{T}\int_{\mathbb{R}^{2}}\int_{\mathbb{R}^{2}}%
\left[ 1-\theta \left( \frac{\left\vert x-y\right\vert }{\alpha L}\right) %
\right] \frac{1}{\alpha }K_{1}\left( \frac{\left\vert x-y\right\vert }{%
\alpha }\right) \left( x-y\right) ^{\perp }\frac{\nabla \left( \psi \left(
x,t\right) -\psi \left( y,t\right) \right) }{\left\vert x-y\right\vert }%
dq_{\alpha }\left( x,t\right) dq_{\alpha }\left( y,t\right) dt \\
& +\frac{1}{4\pi }\int_{0}^{T}\int_{\mathbb{R}^{2}}\int_{\mathbb{R}%
^{2}}\theta \left( \frac{\left\vert x-y\right\vert }{\alpha L}\right) \frac{1%
}{\alpha }K_{1}\left( \frac{\left\vert x-y\right\vert }{\alpha }\right)
\left( x-y\right) ^{\perp }\frac{\nabla \left( \psi \left( x,t\right) -\psi
\left( y,t\right) \right) }{\left\vert x-y\right\vert }dq_{\alpha }\left(
x,t\right) dq_{\alpha }\left( y,t\right) dt \\
& =I_{21}+I_{22}.
\end{align*}%
We have%
\begin{align*}
I_{21}& \leq \frac{1}{4\pi }\int_{0}^{T}\int_{\mathbb{R}^{2}}
\int_{\frac{\left\vert x-y\right\vert }{\alpha }>L}\left[ 1-\theta
\left( \frac{\left\vert
x-y\right\vert }{\alpha L}\right) \right] \frac{\left\vert x-y\right\vert }{%
\alpha }K_{1}\left( \frac{\left\vert x-y\right\vert }{\alpha }\right) \frac{%
\left\vert \nabla \left( \psi \left( x,t\right) -\psi \left( y,t\right)
\right) \right\vert }{\left\vert x-y\right\vert }dq_{\alpha }\left(
x,t\right) dq_{\alpha }\left( y,t\right) dt \\
& \leq \varepsilon \left\Vert D^{2}\psi \right\Vert _{L^{\infty }}\frac{1}{%
4\pi }\int_{0}^{T}\int_{\mathbb{R}^{2}} \int_{\frac{\left\vert x-y\right\vert }{\alpha }%
>L}dq_{\alpha }\left( x,t\right) dq_{\alpha }\left( y,t\right) dt \\
& \leq \varepsilon \left\Vert D^{2}\psi \right\Vert _{L^{\infty }}\frac{1}{%
4\pi }\int_{0}^{T}\left( \int_{\mathbb{R}^{2}}dq_{\alpha }\left( x,t\right)
\right) ^{2} \\
& \leq \varepsilon \left\Vert D^{2}\psi \right\Vert _{L^{\infty }}\frac{1}{%
4\pi }T\left\Vert q^{in}\right\Vert _{\mathcal{M}}^{2}.
\end{align*}%
Since $\frac{r}{\alpha }K_{1}\left( \frac{r}{\alpha }\right) \leq C$ for all
$r$ (independent of $\alpha )$, then similarly to the bound on $I_{122}$, we
have that for $\alpha <\frac{1}{2L}$
\begin{equation*}
I_{22}\leq C\left( T\right) \left\vert \log 2\alpha L\right\vert
^{-1/2}\left\Vert D^{2}\psi \right\Vert _{L^{\infty }}\left\Vert
q^{in}\right\Vert _{\mathcal{M}}+\alpha \left( 1+\alpha \right) CT\left\Vert
D^{2}\psi \right\Vert _{L^{\infty }}\left\Vert q^{in}\right\Vert _{\mathcal{M%
}}^{2}.
\end{equation*}%
Hence for each $\varepsilon >0$, there is an $L$ large enough, depending on $%
\varepsilon $, such that (for $\alpha <\frac{1}{2L}$)
\begin{equation*}
I_{2}\leq C\left( T,\psi ,\left\Vert q^{in}\right\Vert _{\mathcal{M}}\right)
(\varepsilon +\left\vert \log 2\alpha L\right\vert ^{-1/2}+\alpha \left(
1+\alpha \right) ).
\end{equation*}%
For each $\varepsilon >0$, there is $\delta ^{\ast }$ such that $\left\vert
\log r\right\vert ^{-1/2}<\varepsilon $, whenever $r<\delta ^{\ast }$.
Hence, for $\alpha <\min \left\{ \frac{\delta ^{\ast }}{2L},\frac{1}{2L}%
,\varepsilon \right\} $
\begin{equation*}
I_{2}\leq \varepsilon C\left( T,\psi ,\left\Vert q^{in}\right\Vert _{%
\mathcal{M}}\right) .
\end{equation*}%
Therefore, $\lim_{\alpha \rightarrow 0}I_{2}=0$. This concludes the proof
that $q$ is a weak solution of the Euler equations %
\eqref{eq:EulerWeakVortForm} with initial data $q^{in}$.

\section{\label{sec:BR_alpha}Birkhoff-Rott-$\protect\alpha $ equation}

The Birkhoff-Rott-$\alpha $ equation, based on the Euler-$\alpha $ equations %
\eqref{grp:Euler_alpha_vortForm} is derived similarly to the original
Birkhoff-Rott equation. Detailed descriptions of the Birkhoff-Rott equation
as a model for the evolution of the vortex sheet can be found, e.g., in \cite%
{b_S92,b_MP94,b_MB02}. We remark, however, that while the BR equations
assume \textit{a priori} that a vortex sheet remains a curve at a later
time, in the 2D Euler-$\alpha $ case, if the vorticity is initially
supported on a curve, then due to the existence of the unique Lagrangian
flow map $\partial _{t}\eta (x,t)=\int_{ \mathbb{R} ^{2}}K^{\alpha }\left(
x,y\right) dq\left( y,t\right) $, \thinspace $\eta \left( x,0\right) =x$, $%
q\left( x,t\right) =q^{in}\circ \eta ^{-1}\left( x,t\right) $, given by
Theorem \ref{thm:OS01} of Oliver and Shkoller \cite{a_OS01}, it remains
supported on a curve for all times. Existence of the unique Lagrangian flow
map implies that the BR-$\alpha $ equation gives an equivalent description
of the vortex sheet evolution, as the weak solution of 2D Euler-$\alpha $
equations. It is described in the following proposition.

\begin{proposition}
\label{prop:BR_alpha:equivalence} Let $q^{in}$ $\in \mathcal{M}({\mathbb{R}}%
^{2})$ supported on the sheet (curve) \newline
$\Sigma ^{in}=\left\{ x=x(\sigma )\in \mathbb{R}^{2}|\sigma _{0}^{in}\leq
\sigma \leq \sigma _{1}^{in}\right\} $, with a density $\gamma ^{in}(\sigma
) $, that is, the vorticity $q^{in}$ satisfies
\begin{equation*}
\int_{{\mathbb{R}}^{2}}\varphi (x)dq^{in}(x)=\int_{\sigma _{0}^{in}}^{\sigma
_{1}^{in}}\varphi \left( x(\sigma )\right) \gamma ^{in}(\sigma )|x_{\sigma
}\left( \sigma \right) |d\sigma ,
\end{equation*}%
for every $\varphi \in C_{c}^{\infty }\left( {\mathbb{R}}^{2}\right) $, $%
\gamma ^{in}\in L^{1}(\left\vert x_{\sigma }\right\vert d\sigma )\footnote{%
Let $\Sigma $ be a curve parametrized by $x(\sigma ):\left[ \sigma
_{0},\sigma _{1}\right] \rightarrow \mathbb{R}^{2}$, such that $x_{\sigma }$
$\in L^{1}(\left[ \sigma _{0},\sigma _{1}\right] )$, and let $q\in $ $%
\mathcal{M}({\mathbb{R}}^{2})$ be supported on the curve $\Sigma $, with a
density $\gamma $. Then $\gamma \in L^{1}(\left\vert x_{\sigma }\right\vert
d\sigma )$ (and vice versa).
\par
\begin{proof}
First, assume $q\geq 0$, and let $\theta _{n}$ be a truncating sequence, $%
\theta _{n}\in C_{c}^{\infty }(\mathbb{R}^{2})$, $\theta _{n}\left( x\right)
=\theta _{1}\left( \frac{x}{n}\right) $, $\theta _{1}\in C_{c}^{\infty }(%
\mathbb{R}^{2})$, $0\leq \theta _{1}\leq 1$ with $\theta _{1}=1$ for $%
\left\vert x\right\vert \leq 1$ and $\theta _{1}$ $=0$ for $\left\vert
x\right\vert \geq 2$. Then, on the one hand,
\begin{equation*}
\int_{\sigma _{0}}^{\sigma _{1}}\theta _{n}\left( x(\sigma )\right) \gamma
(\sigma )|x_{\sigma }\left( \sigma \right) |d\sigma \geq \int_{\left\{
\sigma :\left\vert x\left( \sigma \right) \right\vert \leq n\right\} \cap
\left[ \sigma _{0},\sigma _{1}\right] }\gamma (\sigma )|x_{\sigma }\left(
\sigma \right) |d\sigma ,
\end{equation*}%
on the other hand%
\begin{equation*}
\int_{\sigma _{0}}^{\sigma _{1}}\theta _{n}\left( x(\sigma )\right) \gamma
(\sigma )|x_{\sigma }\left( \sigma \right) |d\sigma =\int_{{\mathbb{R}}%
^{2}}\theta _{n}(x)dq^{in}(x)\leq \left\Vert \theta _{n}\right\Vert
_{L^{\infty }}\left\Vert q\right\Vert _{\mathcal{M}}\leq \left\Vert
q\right\Vert _{\mathcal{M}},
\end{equation*}%
hence%
\begin{equation*}
\int_{\left\{ \sigma :\left\vert x\left( \sigma \right) \right\vert \leq
n\right\} \cap \left[ \sigma _{0},\sigma _{1}\right] }\gamma (\sigma
)|x_{\sigma }\left( \sigma \right) |d\sigma \leq \left\Vert q\right\Vert _{%
\mathcal{M}}.
\end{equation*}%
Since $n$ can be taken arbitrary large this implies that $\int_{\sigma
_{0}}^{\sigma _{1}}\gamma (\sigma )|x_{\sigma }\left( \sigma \right)
|d\sigma <\infty $. Now, for a signed measure we apply the previous result
to each of the nonnegative measures $q^{+}$, $q^{-}$, given by the Jordan
Decomposition of $q$, $q=q^{+}-q^{-}$, which is defined by
\begin{equation*}
\int_{{\mathbb{R}}^{2}}\varphi (x)dq^{\pm }(x)=\int_{\sigma _{0}}^{\sigma
_{1}}\varphi \left( x(\sigma )\right) \gamma ^{\pm }(\sigma )|x_{\sigma
}\left( \sigma \right) |d\sigma .
\end{equation*}%
\end{proof}
}$. Let $q$ be the solution of \eqref{grp:Euler_alpha_vortForm} in the sense
of the Theorem \ref{thm:OS01}. Then, for as long as the curve $\Sigma
(t)=\left\{ x=x(\sigma ,t)\in \mathbb{R}^{2}|\text{\thinspace }\sigma
_{0}\left( t\right) \leq \sigma \leq \sigma _{1}\left( t\right) \right\} $
remains nice enough so that $x_{\sigma }$ makes sense a.e., $q$ has a
density $\gamma (\sigma ,t)$ supported on the sheet $\Sigma (t)$, $\gamma
\left( \cdot ,t\right) \in L^{1}(\left\vert x_{\sigma }\right\vert d\sigma )$%
, $\gamma \left( \sigma ,t\right) \left\vert x_{\sigma }\left( \sigma
,t\right) \right\vert d\sigma =\gamma \left( \sigma ,0\right) \left\vert
x_{\sigma }\left( \sigma ,0\right) \right\vert d\sigma $ and the sheet
evolves according to the equation
\begin{equation}
\frac{\partial }{\partial t}x\left( \sigma ,t\right) =\int_{\sigma
_{0}\left( t\right) }^{\sigma _{1}\left( t\right) }K^{\alpha }\left( x\left(
\sigma ,t\right) -x\left( \sigma ^{\prime },t\right) \right) \gamma \left(
\sigma ^{\prime },t\right) \left\vert x_{\sigma }\left( \sigma ^{\prime
},t\right) \right\vert d\sigma ^{\prime },  \label{eq:BR_alpha_gen}
\end{equation}%
where $K^{\alpha }$ is given by \eqref{eq:K_alpha}. Additionally, if $\Gamma
\left( \sigma ,t\right) =\int_{\sigma ^{\ast }}^{\sigma }\gamma \left(
\sigma ^{\prime },t\right) \left\vert x_{\sigma }\left( \sigma ^{\prime
},t\right) \right\vert d\sigma ^{\prime }$, where $x\left( \sigma ^{\ast
},t\right) $ is some fixed reference point on $\Sigma (t)$, defines a
strictly increasing function of $\sigma $ (e.g., as in the case of positive
vorticity), then the evolution equation is given by the Birkhoff-Rott-$%
\alpha $ (BR-$\alpha $) equation
\begin{equation}
\frac{\partial }{\partial t}x\left( \Gamma ,t\right) =\int_{\Gamma
_{0}}^{\Gamma _{1}}K^{\alpha }\left( x\left( \Gamma ,t\right) -x\left(
\Gamma ^{\prime },t\right) \right) d\Gamma ^{\prime }  \label{eq:BR_alpha}
\end{equation}%
with $\gamma =1/|x_{\Gamma }|$ being the vorticity density along the curve
and $-\infty <\Gamma _{0}<\Gamma _{1}<\infty $.
\end{proposition}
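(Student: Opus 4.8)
The plan is to reduce the statement to the Lagrangian description of the Euler-$\alpha$ flow furnished by Theorem~\ref{thm:OS01} and then merely translate that description into the arclength and circulation parametrizations. Let $\eta_\alpha\in C^1([0,T];\mathcal G)$ be the unique Lagrangian flow map of \eqref{grp:Euler_alpha_vortForm} with initial datum $q^{in}$, so that $q(\cdot,t)=q^{in}\circ\eta_\alpha^{-1}(\cdot,t)$ is the push-forward of $q^{in}$ by the homeomorphism $\eta_\alpha(\cdot,t)$. I would set $\Sigma(t):=\eta_\alpha(\Sigma^{in},t)$, parametrized by $x(\sigma,t):=\eta_\alpha(x(\sigma),t)$, $\sigma\in[\sigma_0^{in},\sigma_1^{in}]$; since $\eta_\alpha(\cdot,t)$ is a homeomorphism, $\Sigma(t)$ is again a simple curve and $q(\cdot,t)$ is supported on it.

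The first step is the density representation and the transport identity. For $\varphi\in C_c^\infty(\mathbb R^2)$ the push-forward formula and the hypothesis on $q^{in}$ give
\[
\int_{\mathbb R^2}\varphi\,dq(\cdot,t)=\int_{\mathbb R^2}\varphi(\eta_\alpha(y,t))\,dq^{in}(y)=\int_{\sigma_0^{in}}^{\sigma_1^{in}}\varphi\bigl(x(\sigma,t)\bigr)\,\gamma^{in}(\sigma)\,|x_\sigma(\sigma,0)|\,d\sigma .
\]
As long as $\Sigma(t)$ is nice enough that $x_\sigma(\cdot,t)$ exists a.e.\ and is nondegenerate --- the situation engineered in Section~\ref{sec:GlobalReg} --- I would set
\[
\gamma(\sigma,t):=\gamma^{in}(\sigma)\,\frac{|x_\sigma(\sigma,0)|}{|x_\sigma(\sigma,t)|}\,,
\]
so the right-hand side above becomes $\int\varphi(x(\sigma,t))\gamma(\sigma,t)|x_\sigma(\sigma,t)|\,d\sigma$, yielding simultaneously the claimed density representation of $q(\cdot,t)$ on $\Sigma(t)$ and the transport identity $\gamma(\sigma,t)|x_\sigma(\sigma,t)|\,d\sigma=\gamma(\sigma,0)|x_\sigma(\sigma,0)|\,d\sigma$. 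That $\gamma(\cdot,t)\in L^1(|x_\sigma|\,d\sigma)$ then follows either directly from this identity together with $\gamma^{in}\in L^1(|x_\sigma|\,d\sigma)$, or from the footnote lemma applied to the finite measure $q(\cdot,t)$ supported on $\Sigma(t)$.

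For the evolution equation I would differentiate $x(\sigma,t)=\eta_\alpha(x(\sigma),t)$ in time: for each fixed $\sigma$, since $\eta_\alpha\in C^1([0,T];\mathcal G)$ and using the ODE in Theorem~\ref{thm:OS01},
\[
\partial_t x(\sigma,t)=\int_{\mathbb R^2}K^\alpha\bigl(x(\sigma,t)-\eta_\alpha(y,t)\bigr)\,dq^{in}(y)=\int_{\mathbb R^2}K^\alpha\bigl(x(\sigma,t)-z\bigr)\,dq(z,t).
\]
Here one uses that $K^\alpha$ of \eqref{eq:K_alpha} is bounded, continuous and vanishes at infinity --- indeed $|x|\,D\Psi^\alpha(|x|)$ is bounded and $D\Psi^\alpha(r)=\tfrac1{2\pi}\bigl(\tfrac1r-\tfrac1\alpha K_1(r/\alpha)\bigr)\to0$ as $r\to0$, so $K^\alpha\in C_0(\mathbb R^2)$ --- whence the integral against the finite measure $q(\cdot,t)$ is absolutely convergent (no principal value, unlike the original Birkhoff--Rott kernel) and the density representation extends from $C_c^\infty$ to $C_0$ by density. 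Inserting the representation produces exactly \eqref{eq:BR_alpha_gen}, and replacing $\gamma(\sigma',t)|x_\sigma(\sigma',t)|\,d\sigma'$ by $\gamma(\sigma',0)|x_\sigma(\sigma',0)|\,d\sigma'$ via the transport identity shows the endpoints $\sigma_0(t),\sigma_1(t)$ may be taken $t$-independent.

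Finally, for the circulation form, put $\Gamma(\sigma,t)=\int_{\sigma^\ast}^{\sigma}\gamma(\sigma',t)|x_\sigma(\sigma',t)|\,d\sigma'$; by the transport identity $\Gamma(\sigma,t)=\Gamma(\sigma,0)$, so $\Gamma$ is a function of $\sigma$ alone with finite endpoints $\Gamma_0:=\Gamma(\sigma_0)<\Gamma_1:=\Gamma(\sigma_1)$ (equal to $\pm\|q^{in}\|_{\mathcal M}$ in the positive-vorticity case). If $\Gamma(\cdot)$ is strictly increasing it is invertible, and reparametrizing the curve by $\Gamma$ --- with $d\Gamma'=\gamma(\sigma',t)|x_\sigma(\sigma',t)|\,d\sigma'$ --- turns \eqref{eq:BR_alpha_gen} into the BR-$\alpha$ equation \eqref{eq:BR_alpha}, while the chain rule gives $|x_\Gamma|=|x_\sigma|/(\gamma|x_\sigma|)=1/\gamma$. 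I expect the only genuinely delicate point to be the meaning of ``nice enough'': one must know that the homeomorphic flow $\eta_\alpha(\cdot,t)$ carries the rectifiable curve $\Sigma^{in}$ onto a curve still admitting an a.e.-defined, non-vanishing tangent, so that $\gamma(\cdot,t)$ is a bona fide arclength density and the change of variables to $\Gamma$ is legitimate. This does not follow from Theorem~\ref{thm:OS01} by itself (which gives only a homeomorphic flow map) and is precisely what the global regularity theorems of Section~\ref{sec:GlobalReg} supply.
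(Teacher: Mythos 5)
Your proposal is correct and follows exactly the route the paper intends: the paper gives no separate proof of this proposition beyond the footnote's $L^1$ density lemma, treating it as an immediate consequence of the Lagrangian flow map of Theorem~\ref{thm:OS01} (push-forward of $q^{in}$ along $\eta_\alpha$, the resulting transport identity for $\gamma|x_\sigma|\,d\sigma$, restriction of the velocity integral to the curve, and the change of variables to circulation). Your additional remarks --- that $K^\alpha\in C_0$ makes the integral absolutely convergent with no principal value, and that the ``nice enough'' hypothesis is what Section~\ref{sec:GlobalReg} must supply --- are accurate and consistent with the paper's own commentary.
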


In Section \ref{sec:GlobalReg} we show the global well-posedness of the
Birkhoff-Rott-$\alpha $ \eqref{eq:BR_alpha} in the space of Lipschitz
functions and in the H\"{o}lder space $C^{n,\beta }$, $n\geq 1$, which is
the space of $n$-times differentiable functions with H\"{o}lder continuous $%
n^{\text{th}}$ derivative. Thus the solutions the Birkhoff-Rott-$\alpha $
and of the Euler-$\alpha $ are equivalent for the initial data being a
finite positive Radon measure supported on Lipschitz or H\"{o}lder $%
C^{1,\beta }\left( \left( \Gamma _{0},\Gamma _{1}\right) \right) $, $0\leq
\beta <1$, chord arc curve, or supported on $C^{n,\beta }\left( \left(
\Gamma _{0},\Gamma _{1}\right) \right) $, $n\geq 1$, $0<\beta <1$, closed
chord arc curve.

Here $\sigma _{0},\sigma _{1}$ can represent either a finite length curve,
or an infinite one. We remark that the smoothed kernel $K^{\alpha }\left(
x\right) $ is a bounded continuous function, that for $\frac{\left\vert
x\right\vert }{\alpha }\rightarrow 0$ behaves asymptotically as $K^{\alpha
}\left( x\right) =-\frac{1}{4\pi }\frac{1}{\alpha ^{2}}x^{\perp }\log \frac{%
\left\vert x\right\vert }{\alpha }+O\left( \frac{|x|}{\alpha ^{2}}\right) $,
i.e., it is non-singular kernel at the origin. For the case where $\gamma
\left( \cdot ,t\right) \in L^{1}(\left\vert x_{\sigma }\right\vert d\sigma )$
we can show the integrability of the relevant terms, even though $\left\vert
K^{\alpha }\left( x\right) \right\vert $ is decaying like $\left\vert
x\right\vert ^{-1}$ at infinity.

\section{\label{sec:LinStab}Linear stability of a flat vortex sheet with
uniform vorticity density for 2D BR-$\protect\alpha $ model}

The initial data problem for the BR equation is highly unstable due to an
ill-posed response to small perturbations called Kelvin-Helmholtz
instability \cite{a_B62, a_SB79}. The linear stability analysis of the BR-$%
\alpha $ equation shows that the ill-posedness of the original problem is
mollified, and the Kelvin-Helmholtz instability of the original system now
disappears. We assume that the vortex sheet can be parameterized as a graph $%
x_{2}=x_{2}\left( x_{1},t\right) $, the proof can be easily adapted to
establish the result in general. In this case, following calculations
presented in \cite[Section 6.1]{b_MP94}, one can infer from %
\eqref{eq:BR_alpha_gen} the system
\begin{align}
\frac{\partial x_{2}}{\partial t}& =-\frac{\partial x_{2}}{\partial x_{1}}%
u_{1}+u_{2},  \label{eq:2dSystem} \\
\frac{\partial \gamma }{\partial t}& =-\frac{\partial }{\partial x_{1}}%
\left( \gamma u_{1}\right) ,  \notag
\end{align}%
where the velocity field $u=\left( u_{1},u_{2}\right) ^{t}$ is given by
\begin{equation*}
u\left( x_{1},t\right) =\mathrm{p.v.}\int_{\mathbb{R}}K^{\alpha }\left(
x\left( x_{1},t\right) -x\left( x_{1}^{\prime },t\right) \right) \gamma
\left( x_{1}^{\prime },t\right) dx_{1}^{\prime },
\end{equation*}%
here $x\left( x_{1},t\right) =\left( x_{1},x_{2}\left( x_{1},t\right)
\right) ^{t}$ and $\mathrm{p.v.}$ is taken at infinity. We recall that $%
K^{\alpha }\left( x\right) =\frac{x^{\perp }}{\left\vert x\right\vert }D\Psi
^{\alpha }\left( \left\vert x\right\vert \right) $ and $D\Psi ^{\alpha }(r)=%
\frac{1}{2\pi }\left[ -\frac{1}{\alpha }K_{1}\left( \frac{r}{\alpha }\right)
+\frac{1}{r}\right] $, and $K_{1}$ denotes a modified Bessel functions of
the second kind of order one. 
By linearization of \eqref{eq:2dSystem} about the flat sheet $%
x_{2}^{0}\equiv 0$ with uniformly concentrated intensity $\gamma _{0}$,
which is a stationary solution of \eqref{eq:2dSystem}, we obtain the
following linear system
\begin{align*}
& \frac{\partial \tilde{x}_{2}}{\partial t}=\tilde{u}_{2}, \\
& \frac{\partial \tilde{\gamma}}{\partial t}=-\gamma _{0}\frac{\partial
\tilde{u}_{1}}{\partial x_{1}},
\end{align*}%
where
\begin{align*}
\tilde{u}_{1}\left( x_{1},t\right) & =-\gamma _{0}\left( \sgn\left(
x_{1}\right) D\Psi ^{\alpha }\left( \left\vert x_{1}\right\vert \right)
\right) \ast \frac{\partial \tilde{x}_{2}}{\partial x_{1}}, \\
\tilde{u}_{2}\left( x_{1},t\right) & =\left( \sgn\left( x_{1}\right) D\Psi
^{\alpha }\left( \left\vert x_{1}\right\vert \right) \right) \ast \tilde{%
\gamma},
\end{align*}%
and $\left( \tilde{x}_{2},\tilde{\gamma}\right) $ is a small perturbation
about the flat sheet $x_{2}\equiv 0$ with the constant density $\gamma
=\gamma _{0}$. See \cite{a_SSBF81} for the description of the original
Birkhoff-Rott system in such a case.

Consequently, the equation for the Fourier modes (transform) of the above
system is given by
\begin{equation}
\frac{d}{dt}%
\begin{pmatrix}
\widehat{\tilde{x}_{2}} \\
\widehat{\tilde{\gamma}}%
\end{pmatrix}%
=%
\begin{pmatrix}
0 & \frac{i}{2}\sgn(k)d(k) \\
-i\frac{\gamma _{0}^{2}}{2}k^{2}\sgn(k)d(k) & 0%
\end{pmatrix}%
\begin{pmatrix}
\widehat{\tilde{x}_{2}} \\
\widehat{\tilde{\gamma}}%
\end{pmatrix}%
,  \label{eq:FourierModesSystem}
\end{equation}%
where
\begin{equation*}
d(k)=\left( 1+\frac{1}{\alpha ^{2}k^{2}}\right) ^{-1/2}-1.
\end{equation*}%
Observe that in order to calculate the Fourier transform
\begin{equation*}
\mathcal{F}\left( \sgn\left( x_{1}\right) D\Psi ^{\alpha }\left( \left\vert
x_{1}\right\vert \right) \right) \left( k\right) =\frac{i}{2}\sgn(k)d(k),
\end{equation*}%
we used here the integral representation of the modified Bessel function of
the second kind \newline
$K_{1}\left( x_{1}\right) =x_{1}\int_{1}^{\infty }e^{-x_{1}t}\left(
t^{2}-1\right) ^{1/2}dt$, (see, e.g., \cite{b_W44}). The eigenvalues of the
coefficient matrix, given in \eqref{eq:FourierModesSystem}, are
\begin{equation}
\lambda (k)=\pm \frac{1}{2}\left\vert \gamma _{0}\right\vert \left\vert
k\right\vert \left( 1-\left( 1+\frac{1}{\alpha ^{2}k^{2}}\right)
^{-1/2}\right) .  \label{eq:eigenval}
\end{equation}%
We observe that while the linear system for the original Birkhoff-Rott
equation is elliptic (in space and time)
\begin{equation*}
\frac{\partial ^{2}\widehat{\tilde{x}_{2}}}{\partial t^{2}}-\frac{1}{4}%
\gamma _{0}^{2}k^{2}\widehat{\tilde{x}_{2}}=0,
\end{equation*}%
for a Birkhoff-Rott-$\alpha $ equation it is no longer an elliptic system%
\begin{equation*}
\frac{\partial ^{2}\widehat{\tilde{x}_{2}}}{\partial t^{2}}-\frac{1}{4}%
\gamma _{0}^{2}d^{2}(k)k^{2}\widehat{\tilde{x}_{2}}=0,
\end{equation*}%
since $\left\vert d^{2}(k)k^{2}\right\vert $ is bounded and behaves like $%
\frac{1}{\alpha ^{4}k^{2}}$, as $k\rightarrow \infty $ (for $\alpha $ fixed).

To conclude, the $\alpha $-regularization mollifies the Kelvin-Helmholtz
instability as follows: we have an algebraic decay of the eigenvalues to
zero of order $\frac{1}{\alpha ^{2}\left\vert k\right\vert }$, as $%
k\rightarrow \infty $ (for $\alpha $ fixed). While, for $\alpha $ $%
\rightarrow 0$, for fixed $k$, we recover the eigenvalues of the original BR
equations $\pm \frac{1}{2}\left\vert \gamma _{0}\right\vert \left\vert
k\right\vert $ (see, e.g., \cite{a_SSBF81}).

For the sake of comparison, we note that for the vortex blob regularization
of Krasny \cite{a_K86a}, where the singular BR kernel, $K(x)$, was replaced
with the smoothed kernel
\begin{equation*}
K_{\delta }\left( x\right) =K\left( x\right) \frac{\left\vert x\right\vert
^{2}}{\left\vert x\right\vert ^{2}+\delta ^{2}}=\frac{1}{2\pi }\frac{%
x^{\perp }}{\left\vert x\right\vert ^{2}+\delta ^{2}},
\end{equation*}%
the eigenvalues are
\begin{equation*}
\lambda (k)=\pm \frac{1}{2}e^{-\delta k}\left\vert \gamma _{0}\right\vert
\left\vert k\right\vert
\end{equation*}%
with an exponential decay to zero, as $k\rightarrow \infty $ ($\delta >0$ is
fixed). As $\delta $ $\rightarrow 0$, for fixed $k$, one recovers again the
eigenvalues of the original BR equations.

The behavior of the eigenvalues of the linearized system %
\eqref{eq:FourierModesSystem} indicates that high wave number perturbations
grow exponentially in time with a rate that decays to zero, as $k\rightarrow
\infty $, which is the reason for well-posedness of the $\alpha $%
-regularized model. This is unlike the original BR problem that exhibits the
Kelvin-Helmholtz instability. It is worth mentioning that the $\alpha $%
-regularization is ``closer" to the original system than the vortex-blob
method at the high wave numbers, due to the algebraic decay instead of
exponential one in the vortex blob method. This result was also evaluated
computationally in \cite{a_HNP06}.

\section{\label{sec:GlobalReg}Global regularity for BR-$\protect\alpha $
equation}

In this section we present the global existence and uniqueness of solutions
of the BR-$\alpha $ equation \eqref{eq:BR_alpha} in the space of Lipschitz
functions and in the H\"{o}lder space $C^{n,\beta }$, $n\geq 1$, which is
the space of $n$-times differentiable functions with H\"{o}lder continuous $%
n^{\text{th}}$ derivative.

Let us first describe the H\"{o}lder space $C^{n,\beta }\left( J\subset
\mathbb{R};\mathbb{R}^{2}\right) $, $0<\beta \leq 1$, which is the space of
functions $x:J\subset \mathbb{R}\rightarrow \mathbb{R}^{2}$, with a finite
norm
\begin{equation*}
\left\Vert x\right\Vert _{n,\beta \left( J\right) }=\sum_{k=0}^{n}\left\Vert
\frac{d^{k}}{d{\Gamma }^{k}}x\right\Vert _{C^{0}\left( J\right) }+\left\vert
\frac{d^{n}}{d{\Gamma }^{n}}x\right\vert _{\beta \left( J\right) },
\end{equation*}
where
\begin{equation*}
\left\Vert x\right\Vert _{C^{0}\left( J\right) }=\sup_{\Gamma \in
J}\left\vert x\left( \Gamma \right) \right\vert
\end{equation*}
and $\left\vert \cdot \right\vert _{\beta }$ is the H\"{o}lder semi-norm
\begin{equation*}
\left\vert x\right\vert _{\beta \left( J\right) }=\sup_{\substack{ \Gamma
,\Gamma ^{\prime }\in J  \\ \Gamma \neq \Gamma ^{\prime }}}\frac{\left\vert
x\left( \Gamma \right) -x\left( \Gamma ^{\prime }\right) \right\vert }{%
\left\vert \Gamma -\Gamma ^{\prime }\right\vert ^{\beta }}.
\end{equation*}
The Lipschitz space $\mathrm{Lip}\left( J\right) $ is the $C^{0,1}$space,
that is, with the finite norm $\left\Vert x\right\Vert _{\mathrm{Lip}\left(
J\right) }=\left\Vert x\right\Vert _{C^{0}\left( J\right) }+\left\vert
x\right\vert _{1\left( J\right) }$.

We also use the notation
\begin{equation*}
\left\vert x\right\vert _{\ast }=\inf \frac{\left\vert x\left( \Gamma
\right) -x\left( \Gamma ^{\prime }\right) \right\vert }{\left\vert \Gamma
-\Gamma ^{\prime }\right\vert },
\end{equation*}
where the infimum is taken over all $\Gamma ,\Gamma ^{\prime }\in J$ such
that $\Gamma \neq \Gamma ^{\prime }$, or, in the case of a closed curve
(without loss of generality, over $S^{1}$), the infimum is taken over all $%
\Gamma ,\Gamma ^{\prime }\in S^{1}$ such that $\Gamma \neq \Gamma ^{\prime }%
\mod 2\pi $.

We consider the BR-$\alpha $ equation as an evolution functional equation in
the Banach spaces $\mathrm{Lip}$, $C^{1}$ or $C^{n,\beta }$, $n\geq 1$, $%
0<\beta <1$,
\begin{equation}
\begin{split}
& \frac{\partial x}{\partial t}\left( \Gamma ,t\right) =\int_{\Gamma
_{0}}^{\Gamma _{1}}K^{\alpha }\left( x\left( \Gamma ,t\right) -x\left(
\Gamma ^{\prime },t\right) \right) d\Gamma ^{\prime }, \\
& x\left( \Gamma ,0\right) =x_{0}\left( \Gamma \right)
\end{split}
\label{grp:BR_alpha_onBanach}
\end{equation}%
with $\gamma =1/|x_{\Gamma }|$ being the vorticity density along the sheet
and $-\infty <\Gamma _{0}<\Gamma _{1}<\infty $. Notice that the density $%
\gamma \left( \Gamma \right) $ is in $C^{n-1,\beta }\left( \left( \Gamma
_{0},\Gamma _{1}\right) \right) $ for the subset $\left\{ \left\vert
x\right\vert _{\ast }>0\right\} $ of $C^{n,\beta }\left( \left( \Gamma
_{0},\Gamma _{1}\right) \right) $, and $\gamma \left( \Gamma \right) \in
L^{\infty }\left( \left( \Gamma _{0},\Gamma _{1}\right) \right) $ for the
subset $\left\{ \left\vert x\right\vert _{\ast }>0\right\} $ of $\mathrm{Lip}%
\left( \left( \Gamma _{0},\Gamma _{1}\right) \right) $.

\begin{theorem}
\label{thm:GlobalEx} Let $-\infty <\Gamma _{0}<\Gamma _{1}<\infty $,
$V$ be either the space $C^{1,\beta }\left( \left( \Gamma
_{0},\Gamma _{1}\right) \right) $, $0\leq \beta <1$ or the space
$\mathrm{Lip}\left( \left( \Gamma _{0},\Gamma _{1}\right) \right) $
and let $x_{0}\in V\cap \left\{ \left\vert x\right\vert _{\ast
}>0\right\} $. Then for any $T>0$ there is a unique solution {$x\in
C^{1}\left( [-T,T];V\cap \left\{
\left\vert x\right\vert _{\ast }>0\right\} \right) $} of %
\eqref{grp:BR_alpha_onBanach} with initial value $x\left( \Gamma ,0\right)
=x_{0}\left( \Gamma \right) $.

Furthermore, let $x_{0}$ be a closed curve and without loss of
generality, we assume $x_{0}\left( \Gamma \right) \in C^{n,\beta
}\left( S^{1}\right) \cap \left\{ \left\vert x\right\vert _{\ast
}>0\right\} $, then for all $n\geq 1$, $0<\beta <1$, $T>0$ there is
a unique solution {$x\in C^{1}\left( [-T,T];C^{n,\beta }\left(
S^{1}\right) \cap \left\{ \left\vert x\right\vert _{\ast }>0\right\}
\right) $} of \eqref{eq:BR_alpha} with initial value $x\left( \Gamma
,0\right) =x_{0}\left( \Gamma \right) $. In particular, if $x_{0}\in
C^{\infty }\left( S^{1}\right) \cap \left\{ \left\vert x\right\vert
_{\ast }>0\right\} $ then \newline $x\in C^{1}\left(
[-T,T];C^{\infty }\left( S^{1}\right) \cap \left\{ \left\vert
x\right\vert _{\ast }>0\right\} \right) $.
\end{theorem}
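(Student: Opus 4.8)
I regard the Birkhoff--Rott--$\alpha$ equation \eqref{grp:BR_alpha_onBanach} (resp. \eqref{eq:BR_alpha}) as an autonomous evolution equation $\dot x=F[x]$, with $F[x](\Gamma)=\int_{\Gamma_0}^{\Gamma_1}K^\alpha(x(\Gamma)-x(\Gamma'))\,d\Gamma'$ (the integral over $S^1$ in the closed case), posed on the open set $\mathcal O_V=\{x\in V:\ |x|_{\ast}>0\}$ in the Banach space $V$. Everything rests on a kernel lemma: from \eqref{eq:K_alpha}--\eqref{eq:Psi_DPsi_def} and the small/large argument behaviour of $K_0,K_1$ one reads off that $K^\alpha\in C^\infty(\mathbb R^2\setminus\{0\})$ is bounded, with $K^\alpha(z)=-\frac{1}{4\pi\alpha^2}z^\perp\log(|z|/\alpha)+O(|z|/\alpha^2)$ near the origin, so that $|DK^\alpha(z)|\le C_\alpha(1+|\log|z||)$ and $|D^kK^\alpha(z)|\le C_{\alpha,k}|z|^{1-k}$ for $k\ge 2$; thus $DK^\alpha$ is only logarithmically singular (weakly singular, and in $C^{0,\beta}_{\mathrm{loc}}$ for every $\beta<1$), while $D^2K^\alpha$ is of Calder\'on--Zygmund type. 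The chord--arc hypothesis $|x|_{\ast}>0$ is exactly what turns these borderline pointwise bounds into finite $\Gamma'$--integrals, through $|x(\Gamma)-x(\Gamma')|\ge|x|_{\ast}|\Gamma-\Gamma'|$ and the integrability of $\log$ (and, for $\beta>0$, of $|\Gamma-\Gamma'|^{\beta-1}$).

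\noindent\textbf{Local existence and uniqueness.} Using the kernel lemma one shows, for $V$ either $\mathrm{Lip}$ or $C^{1,\beta}$, that $F$ maps $\mathcal O_V$ into $V$ and is Lipschitz on the subsets of $\mathcal O_V$ where $|x|_{\ast}$ is bounded below and $\|x\|_V$ bounded above: $\|F[x]\|_{C^0}\le(\Gamma_1-\Gamma_0)\|K^\alpha\|_{L^\infty}$, and since $\partial_\Gamma F[x](\Gamma)=x_\Gamma(\Gamma)\cdot\int DK^\alpha(x(\Gamma)-x(\Gamma'))\,d\Gamma'$, the weak singularity of $DK^\alpha$ together with $|x(\Gamma)-x(\Gamma')|\ge|x|_{\ast}|\Gamma-\Gamma'|$ makes that integral a bounded $C^{0,\beta}$ function of $\Gamma$ (split the $\Gamma'$--integral into $|\Gamma-\Gamma'|$ small, where one integrates $\log$ directly, and $|\Gamma-\Gamma'|$ away, where $|D^2K^\alpha|\lesssim|z|^{-1}$); the dependence on $x$ is estimated the same way, now using the H\"older continuity of $DK^\alpha$ to absorb a factor $|x-y|^\beta$. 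Cauchy--Lipschitz for ODEs in $V$ then produces a unique maximal solution $x\in C^1((-T_{\max},T_{\max});\mathcal O_V)$, the Lipschitz endpoint demanding the most care in these estimates (and alternatively reachable by approximation with $C^{1,\beta}$ solutions). Uniqueness needs only the log--Lipschitz bound $|F[x](\Gamma)-F[x](\Gamma')|\le C|x(\Gamma,t)-x(\Gamma',t)|(1+|\log|x(\Gamma,t)-x(\Gamma',t)||)$: for two chord--arc solutions $x,y$ one obtains $\frac{d}{dt}\|x-y\|_{C^0}\le C\|x-y\|_{C^0}(1+|\log\|x-y\|_{C^0}|)$, and Osgood's lemma forces $x\equiv y$.

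\noindent\textbf{Continuation to all times.} To reach an arbitrary $[-T,T]$ I would establish three a priori bounds on $(-T_{\max},T_{\max})$: (i) $\frac{d}{dt}\|x(t)\|_{C^0}\le\|F[x]\|_{C^0}\le C$, so the curve stays in a fixed ball; (ii) a positive lower bound for the chord--arc constant $|x(t)|_{\ast}$, from the differential inequality $\frac{d}{dt}|x(\Gamma,t)-x(\Gamma',t)|\ge-C|x(\Gamma,t)-x(\Gamma',t)|(1+|\log|x(\Gamma,t)-x(\Gamma',t)||)$, which by an Osgood-type argument keeps all chord lengths bounded below, combined with $\frac{d}{dt}\|x_\Gamma(t)\|_{C^0}\le C(1+|\log|x(t)|_{\ast}|)\|x_\Gamma(t)\|_{C^0}$, which controls the oscillation of $x_\Gamma$ and rules out a local degeneracy of the parametrization; (iii) a Gr\"onwall bound for the top seminorm $|x_\Gamma(t)|_\beta$, which enters $\partial_\Gamma F[x]$ linearly with coefficients controlled by (i)--(ii). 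Since all of these are finite on $[-T,T]$ and $\mathcal O_V$ is open, the standard continuation argument gives $T_{\max}>T$; autonomy of the equation (replacing $t\mapsto-t$) takes care of the negative times.

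\noindent\textbf{Higher regularity on closed curves, and the main obstacle.} For $C^{n,\beta}(S^1)$ with $n\ge1$, $0<\beta<1$, I would bootstrap from the $C^{1,\beta}$ solution already constructed. On a closed curve the boundary terms drop and integration by parts in $\Gamma'$ gives the identity $\partial_\Gamma F[x](\Gamma)=\int_{S^1}DK^\alpha(x(\Gamma)-x(\Gamma'))[x_\Gamma(\Gamma)-x_\Gamma(\Gamma')]\,d\Gamma'$, in which every singular factor $DK^\alpha$ is now paired with the compensating H\"older difference $x_\Gamma(\Gamma)-x_\Gamma(\Gamma')$. One then proves by induction on $1\le m\le n$ that $F$ maps $\mathcal O_{C^{m,\beta}(S^1)}$ into $C^{m,\beta}(S^1)$ and is locally Lipschitz there: each additional $\Gamma$--derivative either falls on a smooth factor, producing higher derivatives of $x$ against integrable log--kernels, or on $DK^\alpha$, in which case one uses $\partial_\Gamma[DK^\alpha(x(\Gamma)-x(\Gamma'))]=-\partial_{\Gamma'}[DK^\alpha(x(\Gamma)-x(\Gamma'))]+D^2K^\alpha(x(\Gamma)-x(\Gamma'))[x_\Gamma(\Gamma)-x_\Gamma(\Gamma')]$ and integrates by parts again, trading a derivative on the kernel for a derivative on smooth factors or a gained H\"older difference. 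It is exactly the requirement $0<\beta<1$ that makes the principal--value (Calder\'on--Zygmund) integrals appearing at orders $m\ge2$ continuous, indeed $C^{0,\beta}$, which is why $\beta>0$ and closedness are needed there but not for $n=1$. Cauchy--Lipschitz in $C^{m,\beta}(S^1)$ then yields a solution which, by uniqueness, coincides with the $C^{1,\beta}$ one; the global bounds persist, the top seminorm now entering linearly so that a linear Gr\"onwall suffices at each level, and intersecting over $n$ gives the $C^\infty(S^1)$ statement. The main obstacle throughout is precisely this operator analysis — showing $F$ loses no derivatives and is locally Lipschitz with all constants made explicit in $|x|_{\ast}$, given that $DK^\alpha$ is only logarithmically bounded and $D^2K^\alpha$ is Calder\'on--Zygmund — and, tied to it, keeping the chord--arc constant uniformly away from zero along the evolution.
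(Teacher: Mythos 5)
Your overall architecture coincides with the paper's: a kernel lemma recording the boundedness of $K^{\alpha}$, the logarithmic singularity of $DK^{\alpha}$ and the $|z|^{1-k}$ growth of $D^{k}K^{\alpha}$; local well-posedness by showing the velocity functional is locally Lipschitz on $\left\{ |x|_{1}<M,\ |x|_{\ast }>1/M\right\}$ and invoking Picard; continuation by a priori bounds; and, for closed curves, a tame estimate $\left\Vert u\left( x\right) \right\Vert _{n,\beta }\leq C\left( \left\Vert x\right\Vert _{n-1,\beta }\right) \left\Vert x\right\Vert _{n,\beta }$ obtained by desingularizing the principal-value second derivative (your commutator identity $\partial _{\Gamma }F[x]=\int DK^{\alpha }\left( x(\Gamma )-x(\Gamma ^{\prime })\right) \left[ x_{\Gamma }(\Gamma )-x_{\Gamma }(\Gamma ^{\prime })\right] d\Gamma ^{\prime }$ is an equivalent route to the paper's use of \eqref{eq:HolderEstimate} together with the vanishing of the symmetric principal value of $(\Gamma ^{\prime }-\Gamma )^{-1}$). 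Your Osgood uniqueness in $C^{0}$ is a legitimate variant of the paper's contraction-mapping uniqueness.

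The gap is in item (ii) of your continuation step. The Osgood inequality $\frac{d}{dt}r\geq -Cr\left( 1+|\log r|\right)$ for a chord $r(t)=|x(\Gamma ,t)-x(\Gamma ^{\prime },t)|$ yields, as in \eqref{eq:trajBelowBound}, $r(t)\gtrsim r(0)^{e^{C_{1}t}}$, i.e.\ a power of the initial separation with exponent $e^{C_{1}t}>1$; dividing by $|\Gamma -\Gamma ^{\prime }|$ gives a factor $|\Gamma -\Gamma ^{\prime }|^{e^{C_{1}t}-1}\rightarrow 0$, so keeping "all chord lengths bounded below" does \emph{not} bound $|x(t)|_{\ast }$ below. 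An upper bound on $\left\Vert x_{\Gamma }(t)\right\Vert _{C^{0}}$ bounds the chord-arc ratio from above, not below, so the combination you invoke does not produce the claimed lower bound; moreover the coefficient $1+|\log |x(t)|_{\ast }|$ in your inequality for $\left\Vert x_{\Gamma }(t)\right\Vert _{C^{0}}$ presupposes control of the very quantity at issue. The paper closes this loop differently: the Osgood trajectory estimate is used only to show that, on the set where $|x(\Gamma ,t)-x(\Gamma ^{\prime },t)|/\alpha <\varepsilon$, one has $\left\vert \log \left( |x(\Gamma ,t)-x(\Gamma ^{\prime },t)|/\alpha \right) \right\vert \leq e^{tC_{1}}\left( |\log |x_{0}|_{\ast }|+|\log (|\Gamma -\Gamma ^{\prime }|/\alpha )|\right) +e^{tC_{1}}-1$, which is integrable in $\Gamma ^{\prime }$; this yields the pointwise bound \eqref{eq:Du_bound} on $\left\Vert \nabla _{x}u(\cdot ,t)\right\Vert _{L^{\infty }}$ depending only on $|x_{0}|_{\ast }$ and $t$. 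Only then do $\frac{d}{dt}x_{\Gamma }=\nabla _{x}u\cdot x_{\Gamma }$ and $\left\vert \frac{d}{dt}|x(\Gamma ,t)-x(\Gamma ^{\prime },t)|\right\vert \leq \left\Vert \nabla _{x}u\right\Vert _{L^{\infty }}|x(\Gamma ,t)-x(\Gamma ^{\prime },t)|$ give, by \emph{linear} Gr\"{o}nwall, the bounds on $|x(t)|_{1}$, on $1/|x(t)|_{\ast }$ and on the H\"{o}lder seminorm. Your argument can be repaired along these lines (or by a coupled logarithmic Gr\"{o}nwall for $\log (1/|x(t)|_{\ast })$ once one proves $\left\Vert \nabla _{x}u\right\Vert _{L^{\infty }}\leq C\left( 1+|\log |x(t)|_{\ast }|\right)$ directly), but as written the lower bound on the chord-arc constant --- the linchpin of global existence --- is not established.
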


Notice that for $n>1$ we request $\beta $ to be strictly larger than zero
and the curve to be closed.

We remark that, although the kernel $K^{\alpha }$ is a continuous bounded
function, its derivatives are unbounded near the origin, and the chord arc
condition $|x|_{\ast }>0$, which implies simple curves, allows us to show
the integrability of the relevant terms.

The following are the main steps involved in the proof of Theorem \ref%
{thm:GlobalEx}. In the first step, we apply the contraction mapping
principle to the BR-$\alpha $ equation \eqref{eq:BR_alpha} to prove
the short time existence and uniqueness of solutions in the
appropriate space of functions. Specifically, we show that an
initially Lipschitz or $C^{1,\beta },0\leq \beta <1$ smooth
solutions of \eqref{eq:BR_alpha} remain, respectively, Lipschitz or
$C^{1,\beta }$ smooth for a finite short time. Next, we derive an
\textit{a priori} bound for the controlling quantity for
continuing the solution for all time. At step three we extend the $%
C^{1,\beta },0<\beta <1$, result for higher derivatives for the case
of a closed curve.

\subsection{Step 1. Local well-posedness.}

First we show the local existence and uniqueness of solutions. To
apply the contraction mapping principle to the BR-$\alpha $ equation
\eqref{grp:BR_alpha_onBanach}, we first prove the following result

\begin{proposition}
\label{prop:br_alpha_Lip_map}Let $-\infty <\Gamma _{0}<\Gamma _{1}<\infty $,
$1<M<\infty $, $V$ be either the space $C^{1,\beta }\left( \left( \Gamma
_{0},\Gamma _{1}\right) \right) $, $0\leq \beta <1$ or the space $\mathrm{Lip%
}\left( \left( \Gamma _{0},\Gamma _{1}\right) \right) $, and let $K^{M}$ be
the set
\begin{equation*}
K^{M}=\left\{ x\in V:\left\vert x\right\vert _{1}<M,\left\vert x\right\vert
_{\ast }>\frac{1}{M}\right\} .
\end{equation*}%
Then the mapping
\begin{equation}
x\left( \Gamma \right) \mapsto u\left( x\left( \Gamma \right) \right)
=\int_{\Gamma _{0}}^{\Gamma _{1}}K^{\alpha }\left( x\left( \Gamma \right)
-x\left( \Gamma ^{\prime }\right) \right) d\Gamma ^{\prime }
\label{eq:vel_map}
\end{equation}%
defines a locally Lipschitz continuous map from $K^{M}$, equipped with the
topology induced by the $\left\Vert \cdot \right\Vert _{V}$ norm, into $V$.
\end{proposition}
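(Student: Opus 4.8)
The plan is to prove two things: that $u(x)\in V$ for every $x\in K^M$, so the map is well defined, and that $\|u(x_1)-u(x_2)\|_V\le C\|x_1-x_2\|_V$ whenever $x_1,x_2$ lie in a small enough ball of $K^M$ around a fixed point, which is local Lipschitz continuity. Only the behaviour of $K^\alpha$ near the origin is needed, since for $x\in K^M$ one has $|x(\Gamma)-x(\Gamma')|\le M(\Gamma_1-\Gamma_0)$, so the argument of $K^\alpha$ stays in a bounded set. From \eqref{eq:K_alpha}, \eqref{eq:Psi_DPsi_def}, the expansion of $K^\alpha$ near the origin recalled in Section~\ref{sec:GlobalReg}, and the standard small-argument asymptotics of $K_0,K_1$, one checks that for $|y|$ bounded $|K^\alpha(y)|\le C|y|(1+|\log|y||)$, $|\nabla K^\alpha(y)|\le C(1+|\log|y||)$ and $|\nabla^kK^\alpha(y)|\le C|y|^{-(k-1)}$ for $k\ge2$ (the factor $x^\perp$ in $K^\alpha(x)=x^\perp D\Psi^\alpha(|x|)/|x|$ gains one power over the $|x|^{-k}$ singularity of the $k$-th radial derivative of $D\Psi^\alpha(|x|)/|x|$). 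The chord arc condition $|x(\Gamma)-x(\Gamma')|\ge M^{-1}|\Gamma-\Gamma'|$ turns the logarithm into an $L^1$-in-$\Gamma'$ singularity and $|x(\Gamma)-x(\Gamma')|^{-1}$ into a singular-integral-type kernel; this integrability is behind every estimate.

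For $u(x)\in V$: since $K^\alpha$ is bounded and continuous, $u(x)\in C^0$ with $\|u(x)\|_{C^0}\le C$. Differentiability, with $\partial_\Gamma u(x)(\Gamma)=\int_{\Gamma_0}^{\Gamma_1}\nabla K^\alpha(x(\Gamma)-x(\Gamma'))\,x_\Gamma(\Gamma)\,d\Gamma'$, follows by writing $u(x)(b)-u(x)(a)=\int_{\Gamma_0}^{\Gamma_1}[K^\alpha(x(b)-x(\Gamma'))-K^\alpha(x(a)-x(\Gamma'))]\,d\Gamma'$, noting that $\sigma\mapsto K^\alpha(x(\sigma)-x(\Gamma'))$ is absolutely continuous (continuous, with $L^1_{loc}$ derivative $\nabla K^\alpha(x(\sigma)-x(\Gamma'))x_\Gamma(\sigma)$, even across $\sigma=\Gamma'$), and applying Fubini, which is legitimate by the logarithmic bound; in the Lipschitz case $x_\Gamma$ is taken a.e. Then $|x_\Gamma|\le M$ and the $L^1$ log bound give $|u(x)|_1\le C$, settling $V=\mathrm{Lip}$. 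For $V=C^{1,\beta}$, $0\le\beta<1$, one must bound $|u(x)_\Gamma|_\beta$: the part of $u(x)_\Gamma(a)-u(x)_\Gamma(b)$ carrying $x_\Gamma(a)-x_\Gamma(b)$ is controlled by H\"older continuity of $x_\Gamma$ times the $L^1$ log bound, and $\int[\nabla K^\alpha(x(a)-x(\Gamma'))-\nabla K^\alpha(x(b)-x(\Gamma'))]\,x_\Gamma(b)\,d\Gamma'$ by the classical near/far splitting at scale $M^2|a-b|$ (bound each term by $C(1+|\log|\Gamma'-a||)$ on the near set, of measure $O(M^2|a-b|)$; use $|\nabla K^\alpha(x(a)-x(\Gamma'))-\nabla K^\alpha(x(b)-x(\Gamma'))|\le M|a-b|\sup|\nabla^2K^\alpha|\le CM^2|a-b|/|\Gamma'-a|$ on the far set). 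Both contribute $O(|a-b|\,|\log|a-b||)\le C_\beta|a-b|^\beta$, which is exactly where $\beta<1$ is used; for $\beta=0$ the same computation gives $u(x)_\Gamma$ continuous, i.e.\ $u(x)\in C^1$.

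For the Lipschitz estimate, fix $x_0\in K^M$ and take $x_1,x_2\in K^M$ with $\|x_1-x_2\|_V<\tfrac1{2M}$. The crucial observation is that, with $h=x_1-x_2$, the segment joining $x_1(\Gamma)-x_1(\Gamma')$ and $x_2(\Gamma)-x_2(\Gamma')$ has both endpoints at distance $\ge M^{-1}|\Gamma-\Gamma'|$ from the origin and length $|h(\Gamma)-h(\Gamma')|\le\|x_1-x_2\|_V|\Gamma-\Gamma'|\le\tfrac1{2M}|\Gamma-\Gamma'|$, hence stays at distance $\ge\tfrac1{2M}|\Gamma-\Gamma'|$ from the origin; therefore $|\nabla K^\alpha(x_1(\Gamma)-x_1(\Gamma'))-\nabla K^\alpha(x_2(\Gamma)-x_2(\Gamma'))|\le|h(\Gamma)-h(\Gamma')|\sup|\nabla^2K^\alpha|\le CM\|x_1-x_2\|_V$ with no logarithmic loss, and integrating in $\Gamma'$ (and splitting $\nabla K^\alpha(x_1(\Gamma)-x_1(\Gamma'))x_{1,\Gamma}(\Gamma)-\nabla K^\alpha(x_2(\Gamma)-x_2(\Gamma'))x_{2,\Gamma}(\Gamma)$ into an $x_{1,\Gamma}-x_{2,\Gamma}$ piece and a kernel-difference piece) yields the $C^0$ and $C^1$ Lipschitz bounds. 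For $V=C^{1,\beta}$, $0<\beta<1$, one expands the double difference $u(x_1)_\Gamma(a)-u(x_2)_\Gamma(a)-u(x_1)_\Gamma(b)+u(x_2)_\Gamma(b)$; after routine splittings every term reduces to an $L^1$ log bound times $\|x_1-x_2\|_V|a-b|^\beta$ except
\[
\int_{\Gamma_0}^{\Gamma_1}\Big|\big[\nabla K^\alpha(x_1(a)-x_1(\Gamma'))-\nabla K^\alpha(x_1(b)-x_1(\Gamma'))\big]-\big[\nabla K^\alpha(x_2(a)-x_2(\Gamma'))-\nabla K^\alpha(x_2(b)-x_2(\Gamma'))\big]\Big|\,d\Gamma',
\]
the crux, which one handles by writing each inner difference as $\int_0^1\nabla^2K^\alpha(\cdots)\,ds$, subtracting, and using $|\nabla^2K^\alpha|\le C|y|^{-1}$ and $|\nabla^3K^\alpha|\le C|y|^{-2}$ together with the distance-from-origin bounds for the segments that appear (crude estimates on $|\Gamma'-a|\lesssim M^2|a-b|$, integrands of size $\|x_1-x_2\|_V|a-b|/|\Gamma'-a|$ elsewhere), arriving at $\le C(\|x_0\|_V)\|x_1-x_2\|_V|a-b|^\beta$.

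I expect the bookkeeping in this last double-difference estimate — two independent near/far decompositions and several terms — to be the main technical obstacle. The two ideas that make the proposition work are: the chord arc hypothesis $|x|_*>1/M$, which tames the logarithmic singularity of $\nabla K^\alpha$ into an integrable one; and the Lipschitz smallness of $x_1-x_2$, which keeps the segments connecting corresponding points proportionally away from the origin, so that no logarithmic factor (which would only yield log-Lipschitz dependence) is lost. Finally, in the $C^{1,\beta}$ case the constant depends on the full norm $\|x_0\|_V$ — which $K^M$, controlling only $|x|_1$ and $|x|_*$, does not bound — which is precisely why the statement asserts local, rather than global, Lipschitz continuity.
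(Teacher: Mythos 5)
Your proposal is correct and rests on the same two pillars as the paper's proof --- the small-argument asymptotics of $D\Psi ^{\alpha }$, $D^{2}\Psi ^{\alpha }$, $D^{3}\Psi ^{\alpha }$ (equivalently your bounds $|\nabla K^{\alpha }(y)|\lesssim 1+|\log |y||$, $|\nabla ^{2}K^{\alpha }(y)|\lesssim |y|^{-1}$, $|\nabla ^{3}K^{\alpha }(y)|\lesssim |y|^{-2}$ for bounded $|y|$) together with the chord-arc inequalities $M^{-1}|\Gamma -\Gamma ^{\prime }|\leq |x(\Gamma )-x(\Gamma ^{\prime })|\leq M|\Gamma -\Gamma ^{\prime }|$ that convert them into integrable singularities in $\Gamma ^{\prime }$ --- but it organizes the Lipschitz part differently. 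The paper computes the directional derivative $D_{x}u(x)y=\int \nabla K^{\alpha }\left( x(\Gamma )-x(\Gamma ^{\prime })\right) \left( y(\Gamma )-y(\Gamma ^{\prime })\right) d\Gamma ^{\prime }$, bounds its operator norm on $V$ by $C\left( \tfrac{1}{\alpha },M,\Vert x\Vert _{1,\beta },\Gamma _{1},\Gamma _{0},\beta \right) $ (the cancellation of the $|x(\Gamma )-x(\Gamma ^{\prime })|^{-1}$ singularity by $|y(\Gamma )-y(\Gamma ^{\prime })|\leq \Vert y\Vert _{C^{1}}|\Gamma -\Gamma ^{\prime }|$ plays the role of your segment observation), and then obtains local Lipschitz continuity from the fundamental theorem of calculus along the segment $\bar{x}+\varepsilon (\tilde{x}-\bar{x})$ in $V$. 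You instead estimate $u(x_{1})-u(x_{2})$ directly, applying the mean value theorem to the kernel along the segment joining $x_{1}(\Gamma )-x_{1}(\Gamma ^{\prime })$ to $x_{2}(\Gamma )-x_{2}(\Gamma ^{\prime })$, which your smallness condition $\Vert x_{1}-x_{2}\Vert _{V}<\tfrac{1}{2M}$ keeps at distance $\geq \tfrac{1}{2M}|\Gamma -\Gamma ^{\prime }|$ from the origin; this condition plays exactly the role of the paper's requirement $B(x,\delta )\subset K^{M}$. The trade-off: the paper's linearized version is tidier in the bookkeeping (the increment $y$ enters linearly, only one base curve appears, and the crux H\"{o}lder estimate reduces to the three terms $I_{221},I_{222},I_{223}$ rather than a four-point difference), while your direct route avoids having to justify Gateaux differentiability of $u$ and the Banach-space fundamental theorem of calculus. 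The only piece you leave as a sketch is precisely that crux double-difference estimate, but the ingredients you name (two near/far splittings, $|\nabla ^{3}K^{\alpha }|\lesssim |y|^{-2}$ for the second difference of the kernel, the $O(|a-b|)$ measure of the near set) are exactly those the paper deploys there, and your closing remark --- that the constant must depend on the full $\Vert x\Vert _{1,\beta }$ norm, which $K^{M}$ does not control, whence only \emph{local} Lipschitz continuity --- matches the paper's constant.
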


\begin{proof}
Notice that $K^{M}$ is an open set in $V$. We recall that $K^{\alpha }\left(
x\right) =\nabla ^{\perp }\Psi ^{\alpha }\left( \left\vert x\right\vert
\right) =\frac{x^{\perp }}{\left\vert x\right\vert }D\Psi ^{\alpha }\left(
\left\vert x\right\vert \right) ,$ where $\Psi ^{\alpha }\left( r\right) =%
\frac{1}{2\pi }\left[ K_{0}\left( \frac{r}{\alpha }\right) +\log r\right] $
and $D\Psi ^{\alpha }(r)=\frac{d\Psi ^{\alpha }}{dr}(r)=\frac{1}{2\pi }\left[
-\frac{1}{\alpha }K_{1}\left( \frac{r}{\alpha }\right) +\frac{1}{r}\right] $%
. The functions $K_{0}$ and $K_{1}$ denote the modified Bessel functions of
the second kind of orders zero and one, respectively. For details on Bessel
functions, see, e.g., \cite{b_W44}. We observe that $D\Psi ^{\alpha }$ is
bounded
\begin{equation}
D\Psi ^{\alpha }\left( r\right) \leq \frac{C}{\alpha },
\label{eq:Dpsi_bounded}
\end{equation}%
derivatives of $\Psi ^{\alpha }$ decay to zero as $\frac{r}{\alpha }%
\rightarrow \infty $, and as $\frac{r}{\alpha }\rightarrow 0$ satisfy%
\begin{align}
D\Psi ^{\alpha }\left( r\right) & =-\frac{1}{4\pi }\frac{r}{\alpha ^{2}}\log
\frac{r}{\alpha }+O\left( \frac{r}{\alpha ^{2}}\right) ,
\label{eq:Der_psi_at_origin} \\
D^{2}\Psi ^{\alpha }\left( r\right) & =-\frac{1}{4\pi }\frac{1}{\alpha ^{2}}%
\log \frac{r}{\alpha }+O\left( \frac{1}{\alpha ^{2}}\right) ,  \notag \\
D^{3}\Psi ^{\alpha }\left( r\right) & =-\frac{1}{4\pi }\frac{1}{r\alpha ^{2}}%
+O\left( \frac{r}{\alpha ^{4}}\log \frac{r}{\alpha }\right) .  \notag
\end{align}%
The constant $C$ will denote a generic constant independent of the
parameters, while, $C(\diamond )$ denotes a constant which depends on $%
\diamond $.

First we show the local existence and uniqueness of solutions in $C^{1,\beta
}$, $0<\beta <1$.

We start by showing that $u\left( x\left( \Gamma \right) \right) $ maps $%
K^{M}$ into $C^{1,\beta }$. Let $x\in K^{M}$. Using the boundness of $D\Psi
^{\alpha }$ \eqref{eq:Dpsi_bounded} we have
\begin{equation}
\left\vert u\left( x\left( \Gamma \right) \right) \right\vert \leq
\int_{\Gamma _{0}}^{\Gamma _{1}}D\Psi ^{\alpha }\left( \left\vert x\left(
\Gamma \right) -x\left( \Gamma ^{\prime }\right) \right\vert \right) d\Gamma
^{\prime }\leq \frac{C}{\alpha }\left( \Gamma _{1}-\Gamma _{0}\right) .
\label{eq:Ux_C0_bnd}
\end{equation}%
Using that
\begin{equation*}
\frac{du}{d\Gamma }\left( x\left( \Gamma \right) \right) =\int_{\Gamma
_{0}}^{\Gamma _{1}}\nabla K^{\alpha }\left( x\left( \Gamma \right) -x\left(
\Gamma ^{\prime }\right) \right) \frac{dx}{d\Gamma }\left( \Gamma \right)
d\Gamma ^{\prime },
\end{equation*}%
(which can be justified by applying Lebesgue dominated convergence theorem)
and the fact that $\left\vert \frac{dx}{d\Gamma }\left( \Gamma \right)
\right\vert <M$, we obtain
\begin{align*}
\left\vert \frac{du}{d\Gamma }\left( x\left( \Gamma \right) \right)
\right\vert & \leq M\int_{\Gamma _{0}}^{\Gamma _{1}}\left\vert \nabla
K^{\alpha }\left( x\left( \Gamma \right) -x\left( \Gamma ^{\prime }\right)
\right) \right\vert d\Gamma ^{\prime }, \\
& =M\left( \int_{\left( \Gamma _{0},\Gamma _{1}\right) \cap \left\{ \frac{%
\left\vert \Gamma -\Gamma ^{\prime }\right\vert }{\alpha }<\varepsilon
\right\} }+\int_{\left( \Gamma _{0},\Gamma _{1}\right) \backslash \left\{
\frac{\left\vert \Gamma -\Gamma ^{\prime }\right\vert }{\alpha }<\varepsilon
\right\} }\right) , \\
& =M\left( I_{1}+I_{2}\right) ,
\end{align*}%
where $\varepsilon $ is to be defined later. Due to \eqref{eq:K_alpha}, %
\eqref{eq:Der_psi_at_origin} and
\begin{equation}
\frac{1}{M}\frac{\left\vert \Gamma -\Gamma ^{\prime }\right\vert }{\alpha }%
<\left\vert x\right\vert _{\ast }\frac{\left\vert \Gamma -\Gamma ^{\prime
}\right\vert }{\alpha }\leq \frac{\left\vert x\left( \Gamma \right) -x\left(
\Gamma ^{\prime }\right) \right\vert }{\alpha }\leq \left\Vert \frac{dx}{%
d\Gamma }\right\Vert _{C^{0}}\frac{\left\vert \Gamma -\Gamma ^{\prime
}\right\vert }{\alpha }\leq M\varepsilon ,  \label{eq:inequality_in_SM}
\end{equation}%
we have that for a fixed small $\varepsilon $
\begin{align*}
I_{1}& \leq \int_{\frac{\left\vert \Gamma -\Gamma ^{\prime }\right\vert }{%
\alpha }<\varepsilon }\left( \frac{1}{4\pi \alpha ^{2}}\left\vert \log
\left( C\left( M\right) \frac{\left\vert \Gamma -\Gamma ^{\prime
}\right\vert }{\alpha }\right) \right\vert +C\left( M\right) \frac{1}{\alpha
^{2}}\right) d\Gamma ^{\prime } \\
& =C\left( M\right) \frac{1}{\alpha }.
\end{align*}%
For $I_{2}$ due to the boundness of $\left\vert \nabla K^{\alpha
}\right\vert $ in $\left( \Gamma _{0},\Gamma _{1}\right) \backslash \left\{
\frac{\left\vert \Gamma -\Gamma ^{\prime }\right\vert }{\alpha }<\varepsilon
\right\} $ we obtain%
\begin{equation*}
I_{2}\leq \frac{C\left( M\right) }{\alpha ^{2}}\left( \Gamma _{1}-\Gamma
_{0}\right) .
\end{equation*}%
Summing up,
\begin{equation}
\int_{\Gamma _{0}}^{\Gamma _{1}}\left\vert \nabla K^{\alpha }\left( x\left(
\Gamma \right) -x\left( \Gamma ^{\prime }\right) \right) \right\vert d\Gamma
^{\prime }\leq C\left( M,\Gamma _{1},\Gamma _{0},\frac{1}{\alpha }\right)
\label{eq:grad_K_alpha_bnd}
\end{equation}%
and hence%
\begin{equation}
\left\vert \frac{du}{d\Gamma }\left( x\left( \Gamma \right) \right)
\right\vert \leq C\left( M,\Gamma _{1},\Gamma _{0},\frac{1}{\alpha }\right) .
\label{eq:Ux_C1_bnd}
\end{equation}%
To show the H\"{o}lder continuity of $\frac{du}{d\Gamma }\left( x\left(
\Gamma \right) \right) $ we write
\begin{align*}
\left\vert \frac{du}{d\Gamma }\left( x\left( \Gamma \right) \right) -\frac{du%
}{d\Gamma }\left( x\left( \bar{\Gamma}\right) \right) \right\vert & \leq
\int_{\Gamma _{0}}^{\Gamma _{1}}\left\vert \nabla K^{\alpha }\left( x\left(
\Gamma \right) -x\left( \Gamma ^{\prime }\right) \right) \right\vert
\left\vert \frac{dx}{d\Gamma }\left( \Gamma \right) -\frac{dx}{d\Gamma }%
\left( \bar{\Gamma}\right) \right\vert d\Gamma ^{\prime } \\
& +\int_{\Gamma _{0}}^{\Gamma _{1}}\left\vert \nabla K^{\alpha }\left(
x\left( \Gamma \right) -x\left( \Gamma ^{\prime }\right) \right) -\nabla
K^{\alpha }\left( x\left( \bar{\Gamma}\right) -x\left( \Gamma ^{\prime
}\right) \right) \right\vert \left\vert \frac{dx}{d\Gamma }\left( \bar{\Gamma%
}\right) \right\vert d\Gamma ^{\prime } \\
& \leq \left\vert \frac{dx}{d\Gamma }\right\vert _{\beta }\left\vert \Gamma
-\Gamma ^{\prime }\right\vert ^{\beta }\int_{\Gamma _{0}}^{\Gamma
_{1}}\left\vert \nabla K^{\alpha }\left( x\left( \Gamma \right) -x\left(
\Gamma ^{\prime }\right) \right) \right\vert d\Gamma ^{\prime }+ \\
& +M\int_{\Gamma _{0}}^{\Gamma _{1}}\left\vert \nabla K^{\alpha }\left(
x\left( \Gamma \right) -x\left( \Gamma ^{\prime }\right) \right) -\nabla
K^{\alpha }\left( x\left( \bar{\Gamma}\right) -x\left( \Gamma ^{\prime
}\right) \right) \right\vert d\Gamma ^{\prime }.
\end{align*}%
The first term on the right-hand side is bounded by $C\left( M,\Gamma
_{1},\Gamma _{0},\frac{1}{\alpha }\right) \left\vert \frac{dx}{d\Gamma }%
\right\vert _{\beta }\left\vert \Gamma -\bar{\Gamma}\right\vert ^{\beta }$%
due to \eqref{eq:grad_K_alpha_bnd}, as for the second one, let $r=\frac{%
\left\vert \Gamma -\bar{\Gamma}\right\vert }{\alpha }$, and write%
\begin{align*}
I& =\int_{\Gamma _{0}}^{\Gamma _{1}}\left\vert \nabla K^{\alpha }\left(
x\left( \Gamma \right) -x\left( \Gamma ^{\prime }\right) \right) -\nabla
K^{\alpha }\left( x\left( \bar{\Gamma}\right) -x\left( \Gamma ^{\prime
}\right) \right) \right\vert d\Gamma ^{\prime } \\
& =\int_{\left( \Gamma _{0},\Gamma _{1}\right) \cap \left\{ \frac{\left\vert
\Gamma -\Gamma ^{\prime }\right\vert }{\alpha }<2r\right\} }+\int_{\left(
\Gamma _{0},\Gamma _{1}\right) \cap \left\{ \frac{\left\vert \Gamma -\Gamma
^{\prime }\right\vert }{\alpha }\geq 2r\right\} } \\
& =I_{1}+I_{2}.
\end{align*}%
For $I_{1}$ we have $\frac{\left\vert \Gamma -\Gamma ^{\prime }\right\vert }{%
\alpha }<2r$, and hence $\frac{\left\vert \bar{\Gamma}-\Gamma ^{\prime
}\right\vert }{\alpha }<3r$. Due to \eqref{eq:K_alpha}, the fact that
\begin{equation}
\left\vert D^{2}\Psi ^{\alpha }\left( s\right) \right\vert \leq \frac{1}{%
4\pi }\frac{1}{\alpha ^{2}}\left\vert \log \frac{s}{\alpha }\right\vert +%
\frac{C}{\alpha ^{2}}  \label{eq:D2psi_bound}
\end{equation}%
and \eqref{eq:inequality_in_SM} we obtain
\begin{align*}
I_{1}& \leq \int_{\left( \Gamma _{0},\Gamma _{1}\right) \cap \left\{ \frac{%
\left\vert \Gamma -\Gamma ^{\prime }\right\vert }{\alpha }<2r\right\}
}\left\vert \nabla K^{\alpha }\left( x\left( \Gamma \right) -x\left( \Gamma
^{\prime }\right) \right) \right\vert +\left\vert \nabla K^{\alpha }\left(
x\left( \bar{\Gamma}\right) -x\left( \Gamma ^{\prime }\right) \right)
\right\vert d\Gamma ^{\prime } \\
& \leq \frac{C}{\alpha ^{2}}\left( \int_{\frac{\left\vert \Gamma -\Gamma
^{\prime }\right\vert }{\alpha }<2r}\left\vert \log \frac{\left\vert x\left(
\Gamma \right) -x\left( \Gamma ^{\prime }\right) \right\vert }{\alpha }%
\right\vert d\Gamma ^{\prime }+\int_{\frac{\left\vert \bar{\Gamma}-\Gamma
^{\prime }\right\vert }{\alpha }<3r}\left\vert \log \frac{\left\vert x\left(
\bar{\Gamma}\right) -x\left( \Gamma ^{\prime }\right) \right\vert }{\alpha }%
\right\vert d\Gamma ^{\prime }+r\alpha \right) \\
& \leq \frac{C}{\alpha ^{2}}\left( \int_{\frac{\left\vert \Gamma -\Gamma
^{\prime }\right\vert }{\alpha }<2r}\left\vert \log C\left( M\right) \frac{%
\left\vert \Gamma -\Gamma ^{\prime }\right\vert }{\alpha }\right\vert
d\Gamma ^{\prime }+\int_{\frac{\left\vert \bar{\Gamma}-\Gamma ^{\prime
}\right\vert }{\alpha }<3r}\left\vert \log C\left( M\right) \frac{\left\vert
\bar{\Gamma}-\Gamma ^{\prime }\right\vert }{\alpha }\right\vert d\Gamma
^{\prime }+r\alpha \right) \\
& \leq C\left( M,\frac{1}{\alpha }\right) r\left( \left\vert \log
r\right\vert +1\right) .
\end{align*}%
For $I_{2}$ we have $\frac{\left\vert \Gamma -\Gamma ^{\prime }\right\vert }{%
\alpha }\geq 2r$, and hence $\frac{\left\vert \bar{\Gamma}-\Gamma ^{\prime
}\right\vert }{\alpha }\geq r$. By the mean value theorem (MVT), %
\eqref{eq:K_alpha} and the fact that
\begin{equation}
D^{3}\Psi ^{\alpha }\left( s\right) \leq \frac{1}{4\pi \alpha ^{2}}\frac{1}{s%
}+\frac{C}{\alpha ^{3}}  \label{eq:D3psi_bound}
\end{equation}%
we have that for $\Gamma ^{\prime \prime }\in \left[ \Gamma ,\bar{\Gamma}%
\right] $
\begin{align*}
\left\vert \nabla K^{\alpha }\left( x\left( \Gamma \right) -x\left( \Gamma
^{\prime }\right) \right) -\nabla K^{\alpha }\left( x\left( \bar{\Gamma}%
\right) -x\left( \Gamma ^{\prime }\right) \right) \right\vert & \leq r\alpha
\left\Vert \frac{dx}{d\Gamma }\right\Vert _{C^{0}}\left( \frac{C}{\alpha
^{2}\left\vert x\left( \Gamma ^{\prime \prime }\right) -x\left( \Gamma
^{\prime }\right) \right\vert }+\frac{C}{\alpha ^{3}}\right) \\
& \leq C\left( M,\frac{1}{\alpha }\right) r\left( \frac{1}{\left\vert \Gamma
^{\prime \prime }-\Gamma ^{\prime }\right\vert }+1\right) ,
\end{align*}%
we also have that $\frac{\left\vert \Gamma ^{\prime \prime }-\Gamma ^{\prime
}\right\vert }{\alpha }\geq r$ and $\Gamma _{0}\leq \Gamma ^{\prime \prime
}\leq \Gamma _{1}$. Hence
\begin{align*}
\left\vert I_{2}\right\vert & \leq r\frac{C\left( M\right) }{\alpha ^{3}}%
\int_{\left( \Gamma _{0},\Gamma _{1}\right) \cap \left\{ \frac{\left\vert
\Gamma ^{\prime \prime }-\Gamma ^{\prime }\right\vert }{\alpha }\geq
r\right\} }\left( \frac{\alpha }{\left\vert \Gamma ^{\prime \prime }-\Gamma
^{\prime }\right\vert }+1\right) d\Gamma ^{\prime } \\
& \leq C\left( M,\frac{1}{\alpha },\Gamma _{1},\Gamma _{0}\right) r\left(
1+\left\vert \log r\right\vert \right) .
\end{align*}%
Summing up we obtain%
\begin{equation}
\left\vert I\right\vert \leq C\left( M,\frac{1}{\alpha },\Gamma _{1},\Gamma
_{0}\right) \left\vert \bar{\Gamma}-\Gamma \right\vert \left( \left\vert
\log \left\vert \bar{\Gamma}-\Gamma \right\vert \right\vert +1\right) ,
\label{eq:grad_K_alpha_Holder}
\end{equation}%
which implies the H\"{o}lder continuity of $\frac{du}{d\Gamma }\left(
x\left( \Gamma \right) \right) $ for $0<\beta <1$.

It remains to show that $u\left( x\right) $ is locally Lipschitz continuous
on $K^{M}$. We will show that for $x\in K^{M}$, $y\in C^{1,\beta }\left(
\left( \Gamma _{0},\Gamma _{1}\right) \right) $%
\begin{equation*}
\left\Vert D_{x}u\left( x\right) y\right\Vert _{1,\beta }\leq C\left( \frac{1%
}{\alpha },M,\left\Vert x\right\Vert _{1,\beta },\Gamma _{1},\Gamma
_{0},\beta \right) \left\Vert y\right\Vert _{1,\beta }.
\end{equation*}%
Hence for any $x\in K^{M}$, let $\delta $ be such that $B\left( x,\delta
\right) \subset K^{M}$, then for every $\bar{x},\tilde{x}\in B\left(
x,\delta \right) $ by the fundamental theorem of calculus%
\begin{align*}
\left\Vert u\left( \bar{x}\right) -u\left( \tilde{x}\right) \right\Vert
_{1,\beta }& =\left\Vert \int_{0}^{1}\frac{d}{d\varepsilon }u\left( \bar{x}%
+\varepsilon \left( \tilde{x}-\bar{x}\right) \right) d\varepsilon
\right\Vert _{1,\beta } \\
& =\left\Vert \int_{0}^{1}D_{x}u\left( \bar{x}+\varepsilon \left( \tilde{x}-%
\bar{x}\right) \right) \left( \tilde{x}-\bar{x}\right) d\varepsilon
\right\Vert _{1,\beta } \\
& \leq \left\Vert \tilde{x}-\bar{x}\right\Vert _{1,\beta
}\int_{0}^{1}C\left( \frac{1}{\alpha },M,\left\Vert \bar{x}+\varepsilon
\left( \tilde{x}-\bar{x}\right) \right\Vert _{1,\beta },\Gamma _{1},\Gamma
_{0},\beta \right) d\varepsilon \\
& \leq C\left( \frac{1}{\alpha },M,\left\Vert x\right\Vert _{1,\beta
},\delta ,\Gamma _{1},\Gamma _{0},\beta \right) \left\Vert \tilde{x}-\bar{x}%
\right\Vert _{1,\beta },
\end{align*}%
that is, the map is locally Lipschitz.

Let $x\in K^{M}$, $y\in C^{1,\beta }\left( \left( \Gamma _{0},\Gamma
_{1}\right) \right) $, we now compute
\begin{align*}
D_{x}u\left( x\left( \Gamma \right) \right) y\left( \Gamma \right) & =\left.
\frac{d}{d\varepsilon }u\left( x\left( \Gamma \right) +\varepsilon y\left(
\Gamma \right) \right) \right\vert _{\varepsilon =0} \\
& =\left. \frac{d}{d\varepsilon }\int_{\Gamma _{0}}^{\Gamma _{1}}K^{\alpha
}\left( x\left( \Gamma \right) +\varepsilon y\left( \Gamma \right) -x\left(
\Gamma ^{\prime }\right) -\varepsilon y\left( \Gamma ^{\prime }\right)
\right) d\Gamma ^{\prime }\right\vert _{\varepsilon =0} \\
& =\int_{\Gamma _{0}}^{\Gamma _{1}}\nabla K^{\alpha }\left( x\left( \Gamma
\right) -x\left( \Gamma ^{\prime }\right) \right) \left( y\left( \Gamma
\right) -y\left( \Gamma ^{\prime }\right) \right) d\Gamma ^{\prime }.
\end{align*}%
Now, we show that
\begin{equation*}
\left\Vert D_{x}u\left( x\left( \cdot \right) \right) y\left( \cdot \right)
\right\Vert _{1,\beta }\leq C\left( \frac{1}{\alpha },M,\left\Vert
x\right\Vert _{1,\beta },\Gamma _{1},\Gamma _{0},\beta \right) \left\Vert
y\right\Vert _{1,\beta }.
\end{equation*}%
To estimate $\left\Vert D_{x}u\left( x\right) y\right\Vert _{C^{0}}$ we use %
\eqref{eq:grad_K_alpha_bnd}
\begin{equation}
\left\vert D_{x}u\left( x\right) y\right\vert \leq C\left( M,\Gamma
_{1},\Gamma _{0},\frac{1}{\alpha }\right) \left\Vert y\right\Vert _{C^{0}}.
\label{eq:DxUy_C0_bnd}
\end{equation}%
Next we estimate $\left\Vert \frac{d}{d\Gamma }D_{x}u\left( x\right)
y\right\Vert _{C^{0}}$. For $\Gamma ^{\prime }\neq \Gamma ,$ $\nabla
K^{\alpha }\left( x\left( \Gamma \right) -x\left( \Gamma ^{\prime
}\right) \right) \left( y\left( \Gamma \right) -y\left( \Gamma
^{\prime }\right) \right) $ is differentiable in $\Gamma $, hence
(can be justified by
using Lebesgue dominated convergence theorem)%
\begin{align*}
\frac{d}{d\Gamma }D_{x}u\left( x\left( \Gamma \right) \right) y\left( \Gamma
\right) & =\int_{\Gamma _{0}}^{\Gamma _{1}}\nabla K^{\alpha }\left( x\left(
\Gamma \right) -x\left( \Gamma ^{\prime }\right) \right) \frac{dy}{d\Gamma }%
\left( \Gamma \right) d\Gamma ^{\prime } \\
& +\int_{\Gamma _{0}}^{\Gamma _{1}}\sum_{i,j=1}^{2}\partial _{x_{i}}\partial
_{x_{j}}K^{\alpha }\left( x\left( \Gamma \right) -x\left( \Gamma ^{\prime
}\right) \right) \frac{dx_{i}}{d\Gamma }\left( \Gamma \right) \left(
y_{j}\left( \Gamma \right) -y_{j}\left( \Gamma ^{\prime }\right) \right)
d\Gamma ^{\prime }.
\end{align*}%
Notice, that although, for $\Gamma ^{\prime }$ close to $\Gamma $, $%
\left\vert D^{2}K^{\alpha }\left( x\left( \Gamma \right) -x\left( \Gamma
^{\prime }\right) \right) \right\vert =O\left( \frac{1}{\alpha
^{2}\left\vert x\left( \Gamma \right) -x\left( \Gamma ^{\prime }\right)
\right\vert }\right) $ (see \eqref{eq:K_alpha} and %
\eqref{eq:Der_psi_at_origin}), the term $\left( y\left( \Gamma \right)
-y\left( \Gamma ^{\prime }\right) \right) $ cancels the singularity in $%
\frac{1}{x\left( \Gamma \right) -x\left( \Gamma ^{\prime }\right) }$, so
this is not a singular integral.

We have
\begin{align*}
\left\vert \frac{d}{d\Gamma }D_{x}u\left( x\left( \Gamma \right) \right)
y\left( \Gamma \right) \right\vert & \leq \left\Vert \frac{dy}{d\Gamma }%
\right\Vert _{C^{0}}\int_{\Gamma _{0}}^{\Gamma _{1}}\left\vert \nabla
K^{\alpha }\left( x\left( \Gamma \right) -x\left( \Gamma ^{\prime }\right)
\right) \right\vert d\Gamma ^{\prime } \\
& +\left\Vert \frac{dx}{d\Gamma }\right\Vert _{C^{0}}\int_{\Gamma
_{0}}^{\Gamma _{1}}\left\vert D^{2}K^{\alpha }\left( x\left( \Gamma \right)
-x\left( \Gamma ^{\prime }\right) \right) \right\vert \left\vert y\left(
\Gamma \right) -y\left( \Gamma ^{\prime }\right) \right\vert d\Gamma
^{\prime }.
\end{align*}%
Write the second integral on the right-hand side as%
\begin{align*}
\int_{\Gamma _{0}}^{\Gamma _{1}}\left\vert D^{2}K^{\alpha }\left( x\left(
\Gamma \right) -x\left( \Gamma ^{\prime }\right) \right) \right\vert
\left\vert y\left( \Gamma \right) -y\left( \Gamma ^{\prime }\right)
\right\vert d\Gamma ^{\prime }& =\int_{\left( \Gamma _{0},\Gamma _{1}\right)
\cap \left\{ \frac{\left\vert \Gamma -\Gamma ^{\prime }\right\vert }{\alpha }%
<\varepsilon \right\} }+\int_{\left( \Gamma _{0},\Gamma _{1}\right)
\backslash \left\{ \frac{\left\vert \Gamma -\Gamma ^{\prime }\right\vert }{%
\alpha }<\varepsilon \right\} } \\
& =I_{1}+I_{2},
\end{align*}%
for a fixed small $\varepsilon $. Then due to \eqref{eq:K_alpha}, %
\eqref{eq:Der_psi_at_origin} and \eqref{eq:inequality_in_SM}, we obtain%
\begin{align*}
I_{1}& \leq C\frac{1}{\alpha ^{2}}\left\Vert \frac{dy}{d\Gamma }\right\Vert
_{C^{0}}\int_{\frac{\left\vert \Gamma -\Gamma ^{\prime }\right\vert }{\alpha
}<\varepsilon }\frac{M}{\left\vert \Gamma -\Gamma ^{\prime }\right\vert }%
\left\vert \Gamma -\Gamma ^{\prime }\right\vert d\Gamma ^{\prime } \\
& \leq CM\frac{1}{\alpha }\left\Vert \frac{dy}{d\Gamma }\right\Vert _{C^{0}}.
\end{align*}%
For $I_{2}$ we have%
\begin{align*}
I_{2}& \leq C\left( M,\frac{1}{\alpha }\right) \left\Vert \frac{dy}{d\Gamma }%
\right\Vert _{C^{0}}\int_{\left( \Gamma _{0},\Gamma _{1}\right) \cap \left\{
\left\vert \Gamma -\Gamma ^{\prime }\right\vert \geq \varepsilon \alpha
\right\} }\left\vert \Gamma -\Gamma ^{\prime }\right\vert d\Gamma ^{\prime }
\\
& \leq C\left( M,\Gamma _{1},\Gamma _{0},\frac{1}{\alpha }\right) \left\Vert
\frac{dy}{d\Gamma }\right\Vert _{C^{0}}.
\end{align*}%
Hence%
\begin{equation}
\left\vert \frac{d}{d\Gamma }D_{x}u\left( x\left( \Gamma \right) \right)
y\left( \Gamma \right) \right\vert \leq \left\Vert y\right\Vert
_{C^{1}}C\left( M,\Gamma _{1},\Gamma _{0},\frac{1}{\alpha }\right) .
\label{eq:DxUy_C1_bnd}
\end{equation}%
It remains to estimate $\left\vert \frac{d}{d\Gamma }D_{x}u\left( x\right)
y\right\vert _{\beta }$.
\begin{align*}
\frac{d}{d\Gamma }D_{x}u\left( x\left( \Gamma \right) \right) y\left( \Gamma
\right) -\frac{d}{d\Gamma }D_{x}u\left( x\left( \bar{\Gamma}\right) \right)
y\left( \bar{\Gamma}\right) & =\int_{\Gamma _{0}}^{\Gamma _{1}}\left( \nabla
K^{\alpha }\left( x\left( \Gamma \right) -x\left( \Gamma ^{\prime }\right)
\right) \frac{dy}{d\Gamma }\left( \Gamma \right) -\nabla K^{\alpha }\left(
x\left( \bar{\Gamma}\right) -x\left( \Gamma ^{\prime }\right) \right) \frac{%
dy}{d\Gamma }\left( \bar{\Gamma}\right) \right) d\Gamma ^{\prime } \\
& +\int_{\Gamma _{0}}^{\Gamma _{1}}\left[ \sum_{i,j}\partial
_{x_{i}}\partial _{x_{j}}K^{\alpha }\left( x\left( \Gamma \right) -x\left(
\Gamma ^{\prime }\right) \right) \frac{dx_{i}}{d\Gamma }\left( \Gamma
\right) \left( y_{j}\left( \Gamma \right) -y_{j}\left( \Gamma ^{\prime
}\right) \right) -\right. \\
& \left. -\sum_{i,j}\partial _{x_{i}}\partial _{x_{j}}K^{\alpha }\left(
x\left( \bar{\Gamma}\right) -x\left( \Gamma ^{\prime }\right) \right) \frac{%
dx_{i}}{d\Gamma }\left( \bar{\Gamma}\right) \left( y_{j}\left( \bar{\Gamma}%
\right) -y_{j}\left( \Gamma ^{\prime }\right) \right) \right] d\Gamma
^{\prime } \\
& =I_{1}+I_{2}.
\end{align*}%
We write $I_{1}$ as
\begin{align*}
\left\vert I_{1}\right\vert & \leq \int_{\Gamma _{0}}^{\Gamma
_{1}}\left\vert \nabla K^{\alpha }\left( x\left( \Gamma \right) -x\left(
\Gamma ^{\prime }\right) \right) \right\vert \left\vert \frac{dy}{d\Gamma }%
\left( \Gamma \right) -\frac{dy}{d\Gamma }\left( \bar{\Gamma}\right)
\right\vert d\Gamma ^{\prime } \\
& +\int_{\Gamma _{0}}^{\Gamma _{1}}\left\vert \nabla K^{\alpha }\left(
x\left( \Gamma \right) -x\left( \Gamma ^{\prime }\right) \right) -\nabla
K^{\alpha }\left( x\left( \bar{\Gamma}\right) -x\left( \Gamma ^{\prime
}\right) \right) \right\vert \left\vert \frac{dy}{d\Gamma }\left( \bar{\Gamma%
}\right) \right\vert d\Gamma ^{\prime } \\
& \leq I_{11}+I_{12}.
\end{align*}%
Using \eqref{eq:grad_K_alpha_bnd} to bound $I_{11}$ and %
\eqref{eq:grad_K_alpha_Holder} to bound $I_{12}$ we obtain that
\begin{equation*}
\left\vert I_{1}\right\vert \leq C\left( M,\frac{1}{\alpha },\Gamma
_{1},\Gamma _{0}\right) \left\vert \bar{\Gamma}-\Gamma \right\vert ^{\beta
}\left\Vert y\right\Vert _{1,\beta }.
\end{equation*}%
Now we estimate $I_{2}$. Let $r=\frac{\left\vert \Gamma -\bar{\Gamma}%
\right\vert }{\alpha }$, write $I_{2}$ as
\begin{align*}
I_{2}& =\int_{\left( \Gamma _{0},\Gamma _{1}\right) \cap \left\{ \frac{%
\left\vert \Gamma -\Gamma ^{\prime }\right\vert }{\alpha }<2r\right\}
}+\int_{\left( \Gamma _{0},\Gamma _{1}\right) \cap \left\{ \frac{\left\vert
\Gamma -\Gamma ^{\prime }\right\vert }{\alpha }\geq 2r\right\} } \\
& =I_{21}+I_{22}.
\end{align*}%
Using \eqref{eq:K_alpha} and \eqref{eq:D3psi_bound} for $I_{21}$ we have
\begin{align*}
\left\vert I_{21}\right\vert & \leq \int_{\left( \Gamma _{0},\Gamma
_{1}\right) \cap \frac{\left\vert \Gamma -\Gamma ^{\prime }\right\vert }{%
\alpha }<2r}\left\vert D^{2}K^{\alpha }\left( x\left( \Gamma \right)
-x\left( \Gamma ^{\prime }\right) \right) \right\vert \left\vert \frac{dx}{%
d\Gamma }\left( \Gamma \right) \right\vert \left\vert y\left( \Gamma \right)
-y\left( \Gamma ^{\prime }\right) \right\vert d\Gamma ^{\prime } \\
& +\int_{\left( \Gamma _{0},\Gamma _{1}\right) \cap \frac{\left\vert \bar{%
\Gamma}-\Gamma ^{\prime }\right\vert }{\alpha }<3r}\left\vert D^{2}K^{\alpha
}\left( x\left( \bar{\Gamma}\right) -x\left( \Gamma ^{\prime }\right)
\right) \right\vert \left\vert \frac{dx}{d\Gamma }\left( \bar{\Gamma}\right)
\right\vert \left\vert y\left( \bar{\Gamma}\right) -y\left( \Gamma ^{\prime
}\right) \right\vert d\Gamma ^{\prime } \\
& \leq C\left\Vert \frac{dx}{d\Gamma }\right\Vert _{C^{0}}\left\Vert \frac{dy%
}{d\Gamma }\right\Vert _{C^{0}}\int_{\frac{\left\vert \Gamma -\Gamma
^{\prime }\right\vert }{\alpha }<3r}\left( \frac{C}{\alpha ^{2}}\frac{M}{%
\left\vert \Gamma -\Gamma ^{\prime }\right\vert }+\frac{C}{\alpha ^{3}}%
\right) \left\vert \Gamma -\Gamma ^{\prime }\right\vert d\Gamma ^{\prime } \\
& \leq C\left( \frac{1}{\alpha },M,\Gamma _{1},\Gamma _{0}\right)
\left\Vert \frac{dx}{d\Gamma }\right\Vert _{C^{0}}\left\Vert
y\right\Vert _{1,\beta }\left\vert \Gamma -\bar{\Gamma}\right\vert
^{\beta }.
\end{align*}%
We write $I_{22}$ as
\begin{align*}
I_{22}& =\int_{\left( \Gamma _{0},\Gamma _{1}\right) \cap \left\{ \frac{%
\left\vert \Gamma -\Gamma ^{\prime }\right\vert }{\alpha }\geq 2r\right\}
}\sum_{i,j}\partial _{x_{i}}\partial _{x_{j}}K^{\alpha }\left( x\left(
\Gamma \right) -x\left( \Gamma ^{\prime }\right) \right) \frac{dx_{i}}{%
d\Gamma }\left( \Gamma \right) \left( y_{j}\left( \Gamma \right)
-y_{j}\left( \bar{\Gamma}\right) \right) d\Gamma ^{\prime } \\
& +\int_{\left( \Gamma _{0},\Gamma _{1}\right) \cap \left\{ \frac{\left\vert
\Gamma -\Gamma ^{\prime }\right\vert }{\alpha }\geq 2r\right\}
}\sum_{i,j}\partial _{x_{i}}\partial _{x_{j}}K^{\alpha }\left( x\left(
\Gamma \right) -x\left( \Gamma ^{\prime }\right) \right) \left( \frac{dx_{i}%
}{d\Gamma }\left( \Gamma \right) -\frac{dx_{i}}{d\Gamma }\left( \bar{\Gamma}%
\right) \right) \left( y_{j}\left( \bar{\Gamma}\right) -y_{j}\left( \Gamma
^{\prime }\right) \right) d\Gamma ^{\prime } \\
& +\int \sum_{i,j}\left( \partial _{x_{i}}\partial _{x_{j}}K^{\alpha }\left(
x\left( \Gamma \right) -x\left( \Gamma ^{\prime }\right) \right) -\partial
_{x_{i}}\partial _{x_{j}}K^{\alpha }\left( x\left( \bar{\Gamma}\right)
-x\left( \Gamma ^{\prime }\right) \right) \right) x_{_{i}\Gamma }\left( \bar{%
\Gamma}\right) \left( y_{j}\left( \bar{\Gamma}\right) -y_{j}\left( \Gamma
^{\prime }\right) \right) d\Gamma ^{\prime } \\
& =I_{221}+I_{222}+I_{223}.
\end{align*}%
For $I_{221}:$
\begin{align*}
\left\vert I_{221}\right\vert & \leq \left\Vert \frac{dx}{d\Gamma }%
\right\Vert _{C^{0}}\left\Vert \frac{dy}{d\Gamma }\right\Vert
_{C^{0}}\left\vert \Gamma -\bar{\Gamma}\right\vert \int_{\left( \Gamma
_{0},\Gamma _{1}\right) \cap \left\{ \frac{\left\vert \Gamma -\Gamma
^{\prime }\right\vert }{\alpha }\geq 2r\right\} }\left\vert D^{2}K^{\alpha
}\left( x\left( \Gamma \right) -x\left( \Gamma ^{\prime }\right) \right)
\right\vert d\Gamma ^{\prime } \\
& \leq \left\Vert \frac{dx}{d\Gamma }\right\Vert _{C^{0}}\left\Vert \frac{dy%
}{d\Gamma }\right\Vert _{C^{0}}\left\vert \Gamma -\bar{\Gamma}\right\vert
\int_{\left( \Gamma _{0},\Gamma _{1}\right) \cap \left\{ \frac{\left\vert
\Gamma -\Gamma ^{\prime }\right\vert }{\alpha }\geq 2r\right\} }\left( \frac{%
1}{\alpha ^{2}}\frac{M}{\left\vert \Gamma -\Gamma ^{\prime }\right\vert }+%
\frac{C}{\alpha ^{3}}\right) d\Gamma ^{\prime } \\
& \leq C\left( \frac{1}{\alpha },M,\Gamma _{1},\Gamma _{0},\left\Vert \frac{%
dx}{d\Gamma }\right\Vert _{C^{0}}\right) \left\Vert y\right\Vert _{1,\beta
}\left\vert \Gamma -\bar{\Gamma}\right\vert \left[ 1+\left\vert \log
\left\vert \Gamma -\bar{\Gamma}\right\vert \right\vert \right] ,
\end{align*}%
which implies H\"{o}lder continuity for $0<\beta <1$.

For $I_{222}:$
\begin{align*}
\left\vert I_{222}\right\vert & \leq \int_{\left( \Gamma _{0},\Gamma
_{1}\right) \cap \left\{ \frac{\left\vert \Gamma -\Gamma ^{\prime
}\right\vert }{\alpha }\geq 2r\right\} }\left\vert D^{2}K^{\alpha }\left(
x\left( \Gamma \right) -x\left( \Gamma ^{\prime }\right) \right) \right\vert
\left\vert \frac{dx}{d\Gamma }\left( \Gamma \right) -\frac{dx}{d\Gamma }%
\left( \bar{\Gamma}\right) \right\vert \left\vert y\left( \bar{\Gamma}%
\right) -y\left( \Gamma ^{\prime }\right) \right\vert d\Gamma ^{\prime } \\
& \leq C\left\vert \frac{dx}{d\Gamma }\right\vert _{\beta }\left\vert \Gamma
-\bar{\Gamma}\right\vert ^{\beta }\left\Vert \frac{dy}{d\Gamma }\right\Vert
_{C^{0}}\int_{\left( \Gamma _{0},\Gamma _{1}\right) \cap \left\{ \frac{%
\left\vert \Gamma -\Gamma ^{\prime }\right\vert }{\alpha }\geq 2r\right\}
}\left( \frac{1}{\alpha ^{2}}\frac{M}{\left\vert \Gamma -\Gamma ^{\prime
}\right\vert }+\frac{C}{\alpha ^{3}}\right) \left\vert \bar{\Gamma}-\Gamma
^{\prime }\right\vert d\Gamma ^{\prime } \\
& \leq C\left( \frac{1}{\alpha },M,\left\vert \frac{dx}{d\Gamma }\right\vert
_{\beta },\Gamma _{1},\Gamma _{0}\right) \left\vert \Gamma -\bar{\Gamma}%
\right\vert ^{\beta }\left\Vert y\right\Vert _{1,\beta },
\end{align*}%
here we also used that%
\begin{equation*}
\left\vert \bar{\Gamma}-\Gamma ^{\prime }\right\vert \leq \left\vert \bar{%
\Gamma}-\Gamma \right\vert +\left\vert \Gamma -\Gamma ^{\prime }\right\vert .
\end{equation*}%
For $I_{223}:$%
\begin{equation*}
\left\vert I_{223}\right\vert \leq \left\Vert \frac{dx}{d\Gamma }\right\Vert
_{C^{0}}\left\Vert \frac{dy}{d\Gamma }\right\Vert _{C^{0}}\int_{\left(
\Gamma _{0},\Gamma _{1}\right) \cap \left\{ \frac{\left\vert \Gamma -\Gamma
^{\prime }\right\vert }{\alpha }\geq 2r\right\} }\left\vert D^{2}K^{\alpha
}\left( x\left( \Gamma \right) -x\left( \Gamma ^{\prime }\right) \right)
-D^{2}K^{\alpha }\left( x\left( \bar{\Gamma}\right) -x\left( \Gamma ^{\prime
}\right) \right) \right\vert \left\vert \bar{\Gamma}-\Gamma ^{\prime
}\right\vert d\Gamma ^{\prime }
\end{equation*}%
Since $\frac{\left\vert \Gamma -\Gamma ^{\prime }\right\vert }{\alpha }\geq
2r$ and $\frac{\left\vert \bar{\Gamma}-\Gamma ^{\prime }\right\vert }{\alpha
}\geq r$, i.e., $D^{2}K^{\alpha }\left( x\left( \Gamma \right) -x\left(
\Gamma ^{\prime }\right) \right) -D^{2}K^{\alpha }\left( x\left( \bar{\Gamma}%
\right) -x\left( \Gamma ^{\prime }\right) \right) $ is differentiable in $%
\left[ \Gamma ,\bar{\Gamma}\right] $, we can apply the MVT to obtain that
for $\Gamma ^{\prime \prime }\in \left[ \Gamma ,\bar{\Gamma}\right] $
\begin{equation*}
\left\vert D^{2}K^{\alpha }\left( x\left( \Gamma \right) -x\left( \Gamma
^{\prime }\right) \right) -D^{2}K^{\alpha }\left( x\left( \bar{\Gamma}%
\right) -x\left( \Gamma ^{\prime }\right) \right) \right\vert =r\alpha \frac{%
C\left( M\right) }{\alpha ^{3}}\left( \frac{\alpha }{\left\vert \Gamma
^{\prime \prime }-\Gamma ^{\prime }\right\vert ^{2}}+1\right) .
\end{equation*}%
We also have that $\frac{\left\vert \Gamma ^{\prime \prime }-\Gamma ^{\prime
}\right\vert }{\alpha }\geq r$. Hence%
\begin{align*}
\left\vert I_{223}\right\vert & \leq C\left( \frac{1}{\alpha },M\right)
\left\Vert \frac{dx}{d\Gamma }\right\Vert _{C^{0}}\left\Vert \frac{dy}{%
d\Gamma }\right\Vert _{C^{0}}r\alpha \int_{\left( \Gamma _{0},\Gamma
_{1}\right) \cap \frac{\left\vert \Gamma ^{\prime \prime }-\Gamma ^{\prime
}\right\vert }{\alpha }\geq r}\left( \frac{\alpha }{\left\vert \Gamma
^{\prime \prime }-\Gamma ^{\prime }\right\vert ^{2}}+1\right) \left\vert
\bar{\Gamma}-\Gamma ^{\prime }\right\vert d\Gamma ^{\prime } \\
& \leq C\left( \frac{1}{\alpha },M,\left\Vert \frac{dx}{d\Gamma }\right\Vert
_{C^{0}},\Gamma _{1},\Gamma _{0}\right) \left\Vert y\right\Vert _{1,\beta
}\left\vert \bar{\Gamma}-\Gamma \right\vert \left( 1+\left\vert \log
\left\vert \bar{\Gamma}-\Gamma \right\vert \right\vert \right) ,
\end{align*}%
where we have also used that
\begin{equation*}
\left\vert \bar{\Gamma}-\Gamma ^{\prime }\right\vert \leq \left\vert \bar{%
\Gamma}-\Gamma ^{\prime \prime }\right\vert +\left\vert \Gamma ^{\prime
\prime }-\Gamma ^{\prime }\right\vert \leq \left\vert \bar{\Gamma}-\Gamma
\right\vert +\left\vert \Gamma ^{\prime \prime }-\Gamma ^{\prime
}\right\vert .
\end{equation*}%
This implies H\"{o}lder continuity for $0<\beta <1$.

Summing up we have%
\begin{equation*}
\left\vert \frac{d}{d\Gamma }D_{x}u\left( x\right) y\right\vert _{\beta
}\leq C\left( \frac{1}{\alpha },M,\left\Vert x\right\Vert _{1,\beta },\Gamma
_{1},\Gamma _{0},\beta \right) \left\Vert y\right\Vert _{1,\beta }.
\end{equation*}

The above proof deals with $C^{1,\beta }$ case, for $0<\beta <1$. To show
the local existence and uniqueness of solutions in $C^{1}$ case, one simply
applies directly the above proof (estimates \eqref{eq:Ux_C0_bnd},%
\eqref{eq:Ux_C1_bnd},\eqref{eq:DxUy_C0_bnd},\eqref{eq:DxUy_C1_bnd}) without
resorting to the H\"{o}lder estimates.

The proof of the Lipschitz case is similar to the proof of the $C^{1,\beta }$
case, for example, to show Lipschitz continuity of $u\left( x\left( \Gamma
\right) \right) $ for $x\in \mathrm{Lip}\left( \left( \Gamma _{0},\Gamma
_{1}\right) \right) $, denote $r=\frac{\left\vert \Gamma -\bar{\Gamma}%
\right\vert }{\alpha }$ and write%
\begin{align*}
\left\vert u\left( x\left( \Gamma \right) \right) -u\left( x\left( \bar{%
\Gamma}\right) \right) \right\vert & \leq \int_{\Gamma _{0}}^{\Gamma
_{1}}\left\vert K^{\alpha }\left( x\left( \Gamma \right) -x\left( \Gamma
^{\prime }\right) \right) -K^{\alpha }\left( x\left( \bar{\Gamma}\right)
-x\left( \Gamma ^{\prime }\right) \right) \right\vert d\Gamma ^{\prime } \\
& =\int_{\left( \Gamma _{0},\Gamma _{1}\right) \cap E_{r}}+\int_{\left(
\Gamma _{0},\Gamma _{1}\right) \backslash E_{r}}=I_{1}+I_{2},
\end{align*}%
where
\begin{equation*}
E_{r}=\left\{ \Gamma ^{\prime }\in \left( \Gamma _{0},\Gamma _{1}\right) :%
\frac{\left\vert x\left( \Gamma \right) -x\left( \Gamma ^{\prime }\right)
\right\vert }{\alpha }<2rM\right\} .
\end{equation*}%
For $I_{1},$ due to $\frac{1}{M}\frac{\left\vert \Gamma -\Gamma ^{\prime
}\right\vert }{\alpha }<\left\vert x\right\vert _{\ast }\frac{\left\vert
\Gamma -\Gamma ^{\prime }\right\vert }{\alpha }\leq \frac{\left\vert x\left(
\Gamma \right) -x\left( \Gamma ^{\prime }\right) \right\vert }{\alpha },$ we
have that $\frac{\left\vert \Gamma -\Gamma ^{\prime }\right\vert }{\alpha }%
<2rM^{2}$ and hence $\frac{\left\vert \bar{\Gamma}-\Gamma ^{\prime
}\right\vert }{\alpha }<r\left( 1+2M^{2}\right) $. Thus by \eqref{eq:K_alpha}
and \eqref{eq:Dpsi_bounded} we obtain
\begin{align*}
I_{1}& \leq \int_{\left( \Gamma _{0},\Gamma _{1}\right) \cap \left\{ \frac{%
\left\vert \Gamma -\Gamma ^{\prime }\right\vert }{\alpha }<2rM^{2}\right\}
}\left\vert K^{\alpha }\left( x\left( \Gamma \right) -x\left( \Gamma
^{\prime }\right) \right) \right\vert +\left\vert K^{\alpha }\left( x\left(
\bar{\Gamma}\right) -x\left( \Gamma ^{\prime }\right) \right) \right\vert
d\Gamma ^{\prime } \\
& \leq \frac{C}{\alpha }\left( \int_{\frac{\left\vert \Gamma -\Gamma
^{\prime }\right\vert }{\alpha }<2rM^{2}}d\Gamma ^{\prime }+\int_{\frac{%
\left\vert \bar{\Gamma}-\Gamma ^{\prime }\right\vert }{\alpha }<r\left(
1+2M^{2}\right) }d\Gamma ^{\prime }\right) \leq C\left( M\right) r.
\end{align*}%
For $I_{2}$ due to $M\frac{\left\vert \Gamma -\Gamma ^{\prime }\right\vert }{%
\alpha }\geq \frac{\left\vert x\left( \Gamma \right) -x\left( \Gamma
^{\prime }\right) \right\vert }{\alpha }\geq 2rM$, we have that $\frac{%
\left\vert \Gamma -\Gamma ^{\prime }\right\vert }{\alpha }\geq 2r$, and
hence $\frac{\left\vert \bar{\Gamma}-\Gamma ^{\prime }\right\vert }{\alpha }%
\geq r$, which in turn implies that $\frac{\left\vert x\left( \bar{\Gamma}%
\right) -x\left( \Gamma ^{\prime }\right) \right\vert }{\alpha }>\frac{1}{M}%
\frac{\left\vert \bar{\Gamma}-\Gamma ^{\prime }\right\vert }{\alpha }\geq
\frac{r}{M}$. Also, due to $\left\vert x\left( \Gamma \right) -x\left( \bar{%
\Gamma}\right) \right\vert \leq M\left\vert \Gamma -\bar{\Gamma}\right\vert
=Mr\alpha $ we have for every $x\left( \Gamma ^{\prime \prime }\right) \in
B\left( x\left( \Gamma \right) ,\left\vert x\left( \Gamma \right) -x\left(
\bar{\Gamma}\right) \right\vert \right) $ that $\frac{\left\vert x\left(
\Gamma ^{\prime \prime }\right) -x\left( \Gamma ^{\prime }\right)
\right\vert }{\alpha }\geq Mr$. Hence by the mean value theorem and %
\eqref{eq:D2psi_bound}, we have that for $x\left( \Gamma ^{\prime \prime
}\right) \in B\left( x\left( \Gamma \right) ,\left\vert x\left( \Gamma
\right) -x\left( \bar{\Gamma}\right) \right\vert \right) $
\begin{align*}
\left\vert K^{\alpha }\left( x\left( \Gamma \right) -x\left( \Gamma ^{\prime
}\right) \right) -K^{\alpha }\left( x\left( \bar{\Gamma}\right) -x\left(
\Gamma ^{\prime }\right) \right) \right\vert & \leq \left\vert \nabla
K^{\alpha }\left( x\left( \Gamma ^{\prime \prime }\right) -x\left( \Gamma
^{\prime }\right) \right) \right\vert \left\vert x\left( \Gamma \right)
-x\left( \bar{\Gamma}\right) \right\vert \\
& \leq \left( \frac{1}{4\pi }\frac{1}{\alpha ^{2}}\left\vert \log \frac{%
\left\vert x\left( \Gamma ^{\prime \prime }\right) -x\left( \Gamma ^{\prime
}\right) \right\vert }{\alpha }\right\vert +\frac{C}{\alpha ^{2}}\right)
\left\vert x\left( \Gamma \right) -x\left( \bar{\Gamma}\right) \right\vert \\
& \leq C\left( M,\frac{1}{\alpha }\right) \left\vert \Gamma -\bar{\Gamma}%
\right\vert \left( \log \left( \frac{\left\vert \Gamma ^{\prime \prime
}-\Gamma ^{\prime }\right\vert }{\alpha }\right) +1\right)
\end{align*}%
Hence
\begin{align*}
\left\vert I_{2}\right\vert & \leq rC\left( M,\frac{1}{\alpha }\right)
\int_{\left( \Gamma _{0},\Gamma _{1}\right) \cap \left\{ \frac{\left\vert
\Gamma ^{\prime \prime }-\Gamma ^{\prime }\right\vert }{\alpha }\geq
r\right\} }\left( \log \left( \frac{\left\vert \Gamma ^{\prime \prime
}-\Gamma ^{\prime }\right\vert }{\alpha }\right) +1\right) d\Gamma ^{\prime }
\\
& \leq C\left( M,\frac{1}{\alpha },\Gamma _{1},\Gamma _{0}\right) r\left(
1+r\left\vert \log r\right\vert \right) .
\end{align*}%
Hence $u\left( x\left( \Gamma \right) \right) $ is Lipschitz continuous. We
remark, that in the proof of the $C^{1,\beta }$ part we used partitions
using the fact that $x\left( \Gamma \right) $ is a differentiable, however,
given the fact that differentiable functions are Lipschitz, one could have
used the partitioning introduced in the proof of Lipschitz case on subsets
of $x\left( \Gamma \right) $ also for $C^{1,\beta }$ results.
\end{proof}

Proposition \ref{prop:br_alpha_Lip_map} implies the local existence and
uniqueness of solutions:

\begin{proposition}
Let $-\infty <\Gamma _{0}<\Gamma _{1}<\infty $, let $V$ be either the space $%
C^{1,\beta }\left( \left( \Gamma _{0},\Gamma _{1}\right) \right) $, $0\leq
\beta <1$ or the space $\mathrm{Lip}\left( \left( \Gamma _{0},\Gamma
_{1}\right) \right) $, let $K^{M}=\left\{ x\in V:\left\vert x\right\vert
_{1}<M,\left\vert x\right\vert _{\ast }>\frac{1}{M}\right\} $and let $%
x_{0}\in V\cap \left\{ \left\vert x\right\vert _{\ast }>0\right\} ,$then for
any $M$, $1<M<\infty $, such that $x_{0}\in K^{M}$, there exists a time $%
T(M) $, such that the system \eqref{grp:BR_alpha_onBanach} has a unique
local solution $x\in C^{1}((-T(M),T(M));K^{M})$.
\end{proposition}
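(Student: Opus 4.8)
The plan is to recast \eqref{grp:BR_alpha_onBanach} as an autonomous ordinary differential equation $\partial_t x = u(x)$, $x(0)=x_0$, in the Banach space $V$, where $u$ is the map \eqref{eq:vel_map}, and to solve it by the Picard iteration (contraction mapping principle) on the open set $K^{M}$ on which, by Proposition~\ref{prop:br_alpha_Lip_map}, the vector field $u$ is locally Lipschitz. Since $x_0\in K^{M}$ and $K^{M}$ is open in $V$, first fix $\delta>0$ with $\overline{B}(x_0,\delta)\subset K^{M}$ and $\delta$ small enough that $u$ is (globally) Lipschitz on $\overline{B}(x_0,\delta)$ with some constant $L=L(\tfrac{1}{\alpha},M,\|x_0\|_V,\delta,\Gamma_1,\Gamma_0,\beta)$; this is exactly the conclusion of Proposition~\ref{prop:br_alpha_Lip_map}. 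Because $u$ maps $K^{M}$ into $V$ (again Proposition~\ref{prop:br_alpha_Lip_map}, whose $C^{0}$, $C^{1}$ and H\"{o}lder components are the estimates \eqref{eq:Ux_C0_bnd}, \eqref{eq:Ux_C1_bnd}, \eqref{eq:grad_K_alpha_Holder}), we have $\|u(x_0)\|_V<\infty$, hence $N:=\sup_{x\in\overline{B}(x_0,\delta)}\|u(x)\|_V\le \|u(x_0)\|_V+L\delta<\infty$.

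Next, for $T>0$ to be chosen, let $\mathcal{X}_T=\{x\in C([-T,T];V):x(0)=x_0,\ \sup_{|t|\le T}\|x(t)-x_0\|_V\le\delta\}$, a closed subset of the Banach space $C([-T,T];V)$ (sup norm), hence a complete metric space, and define $(\mathcal{T}x)(t)=x_0+\int_0^t u(x(s))\,ds$. For $x\in\mathcal{X}_T$ the integrand is well defined since $x(s)\in\overline{B}(x_0,\delta)\subset K^{M}$; moreover $\|(\mathcal{T}x)(t)-x_0\|_V\le |t|\,N\le TN$, so $\mathcal{T}:\mathcal{X}_T\to\mathcal{X}_T$ provided $T\le\delta/N$. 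For $x,y\in\mathcal{X}_T$,
\begin{equation*}
\|(\mathcal{T}x)(t)-(\mathcal{T}y)(t)\|_V\le\int_0^{|t|}\|u(x(s))-u(y(s))\|_V\,ds\le L\,|t|\,\|x-y\|_{C([-T,T];V)}\le LT\,\|x-y\|_{C([-T,T];V)},
\end{equation*}
so $\mathcal{T}$ is a contraction once $T<1/(2L)$. By the contraction mapping principle $\mathcal{T}$ has a unique fixed point $x\in\mathcal{X}_T$, which thus satisfies $x(t)=x_0+\int_0^t u(x(s))\,ds$. Since $s\mapsto u(x(s))$ is continuous into $V$, the right-hand side is $C^{1}$ in $t$, so $x\in C^{1}([-T,T];V)$, $\partial_t x(t)=u(x(t))$, and $x(t)\in\overline{B}(x_0,\delta)\subset K^{M}$ for $|t|\le T$. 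Setting $T(M):=\min\{\delta/N,\,1/(2L)\}$ gives the asserted local solution in $C^{1}((-T(M),T(M));K^{M})$.

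For uniqueness, suppose $x^{(1)},x^{(2)}\in C^{1}((-T(M),T(M));K^{M})$ both solve the equation with $x^{(1)}(0)=x^{(2)}(0)=x_0$. The set of $t$ with $x^{(1)}(t)=x^{(2)}(t)$ is nonempty and closed; it is also open, since near any such $t_{\ast}$ both curves lie in a common ball $\overline{B}(x^{(1)}(t_{\ast}),\rho)\subset K^{M}$ on which $u$ is Lipschitz, and Gr\"{o}nwall's inequality applied to $\|x^{(1)}(t)-x^{(2)}(t)\|_V\le\int_{t_{\ast}}^{t}\|u(x^{(1)}(s))-u(x^{(2)}(s))\|_V\,ds$ forces $x^{(1)}\equiv x^{(2)}$ on a neighborhood of $t_{\ast}$. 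By connectedness of $(-T(M),T(M))$ the two solutions coincide.

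The genuinely delicate ingredient — controlling the singularities of $\nabla K^{\alpha}$ and $D^{2}K^{\alpha}$ near the origin against the chord-arc condition $|x|_{\ast}>1/M$ to obtain the local Lipschitz bound for $u$ on $K^{M}$ — is already done in Proposition~\ref{prop:br_alpha_Lip_map}. What remains here is the standard Picard--Lindel\"{o}f argument in a Banach space; the only point needing mild care is that $T(M)$ must be chosen small enough (depending on $M$, i.e. on $\|x_0\|_V$ and on how deep $x_0$ sits inside $K^{M}$) so that the Picard iterates do not leave $K^{M}$, equivalently so that the constraints $|x|_{1}<M$ and $|x|_{\ast}>1/M$ are preserved on the short time interval.
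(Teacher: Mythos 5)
Your proposal is correct and follows exactly the route the paper intends: the paper's entire ``proof'' of this proposition is the one-line assertion that Proposition~\ref{prop:br_alpha_Lip_map} implies local well-posedness via the contraction mapping principle, and you have simply written out the standard Picard--Lindel\"{o}f argument in the Banach space $V$ (invariance of a closed ball inside $K^{M}$, contraction for small $T$, and uniqueness by an open-closed argument) that the authors leave implicit.
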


\subsection{Step 2. Global existence.}

To show the global existence, we assume by contradiction, that $T_{\max
}<\infty $, where $\left[ 0,T_{\max }\right) $ is the maximal interval of
existence, and hence the solution leaves in a finite time the open set $%
K^{M} $, for all $M>1$, that is, $\limsup_{t\rightarrow T_{\max
}^{-}}\left\Vert x\right\Vert _{V}=\infty $ or $\limsup_{t\rightarrow
T_{\max }^{-}}\frac{1}{\left\vert x\left( \cdot ,t\right) \right\vert _{\ast
}}=\infty $. Therefore, if we show global bounds on $\frac{1}{\left\vert
x\left( \cdot ,t\right) \right\vert _{\ast }}$ and $\left\Vert x\left( \cdot
,t\right) \right\Vert _{V}$ in $\left[ 0,T_{\max }\right) $, we obtain a
contradiction to the blow-up and thus the obtained local solutions can be
continued for all time. The result extends to negative times as well.

To control the quantities $\frac{1}{\left\vert x\left( \cdot ,t\right)
\right\vert _{\ast }}$ and $\left\Vert x\left( \cdot ,t\right) \right\Vert
_{V}$ we need to bound $\int_{0}^{T_{\max }}\left\Vert \nabla _{x}u\left(
x(\cdot ,t),t\right) \right\Vert _{L^{\infty }\left( \left( \Gamma
_{0},\Gamma _{1}\right) \right) }dt$. The next proposition provides the
bound on the gradient of the velocity .

\begin{proposition}
Let $x_{0}\in \mathrm{Lip}\left( \left( \Gamma _{0},\Gamma _{1}\right)
\right) $ and $\left\vert x_{0}\right\vert _{\ast }>0$. Suppose the solution
exists on $\left[ 0,T_{\max }\right) $, then for $t\in \left[ 0,T_{\max
}\right) $ we have
\begin{equation}
\left\vert \nabla _{x}u\left( x\left( \Gamma ,t\right) ,t\right) \right\vert
\leq \frac{1}{\alpha }C\left( \left\vert x_{0}\right\vert _{\ast
},C_{1}\right) \left( e^{tC_{1}}+1\right) ,  \label{eq:Du_bound}
\end{equation}%
where $C_{1}=C\frac{1}{\alpha ^{2}}\left( \Gamma _{1}-\Gamma _{0}\right) $.
\end{proposition}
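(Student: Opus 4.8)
The plan is to control, on the maximal interval $[0,T_{\max})$, the single geometric quantity $m(t):=|x(\cdot,t)|_{\ast}$, and to read off \eqref{eq:Du_bound} from it. First observe that, differentiating the velocity map in $\Gamma$ (justified, as in the proof of Proposition~\ref{prop:br_alpha_Lip_map}, by dominated convergence),
\begin{equation*}
\nabla_{x}u\bigl(x(\Gamma,t),t\bigr)=\int_{\Gamma_{0}}^{\Gamma_{1}}\nabla K^{\alpha}\bigl(x(\Gamma,t)-x(\Gamma',t)\bigr)\,d\Gamma',
\end{equation*}
so the whole task is to estimate this integral.

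\emph{Step 1: the gradient is controlled by $m(t)$.} I would re-run the near/far argument leading to \eqref{eq:grad_K_alpha_bnd}, this time splitting at the scale $|\Gamma-\Gamma'|=\alpha/m(t)$ and keeping track only of $m(t)$. On the near part, the chord--arc inequality $|x(\Gamma,t)-x(\Gamma',t)|\ge m(t)\,|\Gamma-\Gamma'|$ (cf. \eqref{eq:inequality_in_SM}) together with the origin asymptotics \eqref{eq:Der_psi_at_origin}--\eqref{eq:D2psi_bound} gives $|\nabla K^{\alpha}(x(\Gamma,t)-x(\Gamma',t))|\le \frac{C}{\alpha^{2}}\bigl(1+\bigl|\log\bigl(m(t)|\Gamma-\Gamma'|/\alpha\bigr)\bigr|\bigr)$, and the substitution $\tau=m(t)|\Gamma-\Gamma'|/\alpha$ integrates this to a contribution of order $1/(\alpha m(t))$; on the far part $|x(\Gamma,t)-x(\Gamma',t)|\ge\alpha$, hence $|\nabla K^{\alpha}|\le C/|x(\Gamma,t)-x(\Gamma',t)|^{2}\le C/\alpha^{2}$, contributing at most $C(\Gamma_{1}-\Gamma_{0})/\alpha^{2}=C_{1}$. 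Thus $\bigl|\nabla_{x}u(x(\Gamma,t),t)\bigr|\le \tfrac{C}{\alpha\,m(t)}+C_{1}$. The same computation bounds $\int_{\Gamma_{0}}^{\Gamma_{1}}|\nabla K^{\alpha}(y-x(\Gamma',t))|\,d\Gamma'$ for an \emph{arbitrary} base point $y\in\mathbb{R}^{2}$: the only property used is that, by the chord--arc condition, the parameter set $\{\Gamma':|y-x(\Gamma',t)|<s\}$ has diameter, hence Lebesgue measure, at most $2s/m(t)$.

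\emph{Step 2: $m(t)$ decays at most exponentially.} Fix $\Gamma\ne\Gamma'$, put $d(t)=|x(\Gamma,t)-x(\Gamma',t)|$, and differentiate $\log d(t)$ using $\partial_{t}x=u(x)$ and the mean value theorem for $K^{\alpha}$ along the segment joining $x(\Gamma',t)-x(\Gamma'',t)$ to $x(\Gamma,t)-x(\Gamma'',t)$, which gives
\begin{equation*}
\bigl|\partial_{t}\log d(t)\bigr|\le \sup_{y\in\mathbb{R}^{2}}\int_{\Gamma_{0}}^{\Gamma_{1}}\bigl|\nabla K^{\alpha}\bigl(y-x(\Gamma'',t)\bigr)\bigr|\,d\Gamma''.
\end{equation*}
Feeding in the uniform-in-$y$ bound from Step~1 and running a Gr\"onwall argument in $t$ — here the chord--arc hypothesis is genuinely used, together with the fact that $\nabla K^{\alpha}$ is only \emph{logarithmically} (not power-law) singular at the origin, a feature of the $\alpha$-regularization, so that the near-diagonal parameter set the kernel ``sees'' contributes boundedly — one controls the degradation of $d(t)$ and obtains, uniformly over all pairs, $m(t)\ge|x_{0}|_{\ast}\,e^{-C_{1}t}$ on $[0,T_{\max})$; the arc-length density $|\partial_{\Gamma}x(\Gamma,t)|$, which enters the same estimates, is propagated in parallel from $\partial_{t}(\partial_{\Gamma}x)=(\nabla_{x}u)\,\partial_{\Gamma}x$.

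\emph{Step 3 and the main obstacle.} Substituting $m(t)\ge|x_{0}|_{\ast}e^{-C_{1}t}$ into Step~1 gives $|\nabla_{x}u(x(\Gamma,t),t)|\le \tfrac{C}{\alpha|x_{0}|_{\ast}}e^{C_{1}t}+C_{1}\le \tfrac{1}{\alpha}\,C(|x_{0}|_{\ast},C_{1})\,(e^{tC_{1}}+1)$, which is \eqref{eq:Du_bound}; the negative-time bound is identical. The delicate point is Step~2: one must bound the velocity increment along the sheet by the chord, with a rate depending only on $(\Gamma_{1}-\Gamma_{0},\alpha)$ and not on the a priori possibly degenerating $m(t)$. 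This is a coupled, Riccati-type estimate — $|\nabla_{x}u|$ blows up like $1/m(t)$ while $1/m(t)$ is in turn governed by $|\nabla_{x}u|$ — and closing it rests precisely on the measure bound $|\{\Gamma':|y-x(\Gamma',t)|<s\}|\le 2s/m(t)$ furnished by the chord--arc condition, which is what tames the logarithmic singularity of $\nabla K^{\alpha}$.
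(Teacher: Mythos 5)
Your Step 1 is essentially the paper's splitting of $\nabla_x u$ into near and far parts, and your Step 3 would indeed finish the proof \emph{if} Step 2 held. But Step 2, which you yourself flag as ``the main obstacle,'' has a genuine gap: the coupled estimate you set up does not close. From your Step 1 the best uniform-in-$y$ bound you have is $\sup_y\int|\nabla K^{\alpha}(y-x(\Gamma'',t))|\,d\Gamma''\le \frac{C}{\alpha m(t)}+C_1$, so your Gr\"onwall inequality reads $|\partial_t\log d|\le \frac{C}{\alpha m(t)}+C_1$, i.e.\ $m'\ge -\frac{C}{\alpha}-C_1 m$, whose comparison solution reaches zero in finite time; no exponential lower bound $m(t)\ge|x_0|_*e^{-C_1t}$ comes out of this, and the measure bound $|\{\Gamma':|y-x(\Gamma',t)|<s\}|\le 2s/m(t)$ cannot rescue it because it reintroduces the unknown $m(t)$. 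There is also a technical problem with applying the mean value theorem to $K^{\alpha}$ along the segment from $x(\Gamma,t)-x(\Gamma'',t)$ to $x(\Gamma',t)-x(\Gamma'',t)$: for $\Gamma''$ between $\Gamma$ and $\Gamma'$ that segment can pass through (or arbitrarily near) the logarithmic singularity of $\nabla K^{\alpha}$ at the origin, so $\sup_{y\in\mathrm{seg}}|\nabla K^{\alpha}(y-x(\Gamma''))|$ is not controlled by your near/far bound.

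The paper avoids the coupling entirely: it invokes the \emph{pointwise} log-Lipschitz modulus of $K^{\alpha}$ as a function on $\mathbb{R}^2$ (estimate (2.14) of Oliver--Shkoller), $\int_{\mathbb{R}^2}|K^{\alpha}(y,z)-K^{\alpha}(y',z)|\,dq(z,t)\le \frac{C}{\alpha}\varphi\bigl(\frac{|y-y'|}{\alpha}\bigr)\|q\|_{\mathcal{M}}$ with $\varphi(r)=r(1-\log r)$ for $r<1$, which needs only $\|q\|_{\mathcal{M}}=\Gamma_1-\Gamma_0$ and no chord-arc information at time $t$. The resulting Osgood-type ODE gives a \emph{double-exponential} degradation of the separation, $\frac{r(t)}{\alpha}\ge\bigl(\frac{r(0)}{\alpha}\bigr)^{e^{tC_1}}e^{1-e^{tC_1}}$, not the single exponential you claim. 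The bound \eqref{eq:Du_bound} nevertheless has only $e^{tC_1}$ in it because the near-field integral sees only the \emph{logarithm} of the separation: with $\varepsilon=e^{-e^{tC_1}}$ one gets $|\log(|x(\Gamma,t)-x(\Gamma',t)|/\alpha)|\le e^{tC_1}\bigl(|\log|x_0|_*|+|\log(|\Gamma-\Gamma'|/\alpha)|+1\bigr)$, whose integral over the near set yields $\frac{1}{\alpha}C(|x_0|_*)(e^{tC_1}+1)$. You would need to replace your Step 2 by this decoupled argument (or prove the pointwise log-Lipschitz estimate for $K^{\alpha}$ yourself from the asymptotics \eqref{eq:Der_psi_at_origin}) and then redo the near-field integral with the double-exponential lower bound.
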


\begin{proof}
We write $\nabla _{x}u\left( x(\Gamma ,t),t\right) $ as
\begin{align*}
\nabla _{x}u\left( x(\Gamma ,t),t\right) & =\int_{\Gamma _{0}}^{\Gamma
_{1}}\nabla _{x}K^{\alpha }\left( x\left( \Gamma ,t\right) -x\left( \Gamma
^{\prime },t\right) \right) d\Gamma ^{\prime } \\
& =\int_{\left( \Gamma _{0},\Gamma _{1}\right) \cap E_{\varepsilon
}}+\int_{\left( \Gamma _{0},\Gamma _{1}\right) \backslash E_{\varepsilon
}}=I_{1}+I_{2},
\end{align*}%
where
\begin{equation*}
E_{\varepsilon }=\left\{ \Gamma ^{\prime }\in \left( \Gamma _{0},\Gamma
_{1}\right) :\frac{\left\vert x\left( \Gamma ,t\right) -x\left( \Gamma
^{\prime },t\right) \right\vert }{\alpha }<\varepsilon \right\} ,
\end{equation*}%
for a fixed small $0<\varepsilon <1$, to be further refined later.

Let the vorticity $q(x,t)$ be supported on the curve $\left\{ x\left( \Gamma
,t\right) :\Gamma _{0}\leq \Gamma \leq \Gamma _{1}\right\} $, with a density
$\gamma \left( \Gamma ,t\right) =1/|x_{\Gamma }\left( \Gamma ,t\right) |$
(due to the Lipschitz continuity of $x\left( \Gamma ,t\right) $ its
derivative exists almost everywhere and is essentially bounded, and also due
to $\left\{ \left\vert x\right\vert _{\ast }>0\right\} $, the vorticity
density $\gamma \left( \Gamma ,t\right) \in $ $L^{\infty }\left( \left(
\Gamma _{0},\Gamma _{1}\right) \right) $), that is for every $\varphi \in
C_{c}^{\infty }\left( {\mathbb{R}}^{2}\right) $%
\begin{equation*}
\int_{{\mathbb{R}}^{2}}\varphi (x)dq(x,t)=\int_{\Gamma _{0}}^{\Gamma
_{1}}\varphi \left( x(\Gamma ,t)\right) d\Gamma .
\end{equation*}%
Observe that the vorticity $q(x,t)$ is a finite Radon measure which is the
unique weak solution of the Euler equations given by Theorem \ref{thm:OS01}
of Oliver and Shkoller \cite{a_OS01}. Also, $\left\Vert q\right\Vert _{%
\mathcal{M}}=\Gamma _{1}-\Gamma _{0}$.

Let $\eta $ denote the unique Lagrangian flow map $\partial _{t}\eta
(y,t)=\int_{\mathbb{R}^{2}}K^{\alpha }\left( y,z\right) dq\left( z,t\right) $%
, \thinspace $\eta \left( y,0\right) =y$, $q=q^{in}\circ \eta ^{-1}$, $y\in
\mathbb{R}^{2}$ given by Theorem \ref{thm:OS01}. We remark that in the
formulation of BR-$\alpha $ model, we assumed the positivity of the
vorticity $q$, see Proposition \ref{prop:BR_alpha:equivalence}. Denote the
distance between two points $\eta (y,t)$ and $\eta (y^{\prime },t)$ by $%
r\left( t\right) =\left\vert \eta \left( y,t\right) -\eta \left( y^{\prime
},t\right) \right\vert $, where $r\left( 0\right) =\left\vert y-y^{\prime
}\right\vert $.

Then, using the estimate (2.14) of \cite{a_OS01}, we have
\begin{align*}
\left\vert \frac{d}{dt}r\left( t\right) \right\vert & \leq \int_{\mathbb{R}%
^{2}}\left\vert K^{\alpha }\left( y,z\right) -K^{\alpha }\left( y^{\prime
},z\right) \right\vert dq\left( z,t\right) \\
& \leq C\frac{1}{\alpha }\varphi \left( \frac{r\left( t\right) }{\alpha }%
\right) \left\Vert q\right\Vert _{\mathcal{M}} \\
& =C\frac{1}{\alpha }\varphi \left( \frac{r\left( t\right) }{\alpha }\right)
\left\Vert q^{in}\right\Vert _{\mathcal{M}},
\end{align*}%
where
\begin{equation*}
\varphi \left( r\right) =\left\{
\begin{array}{ll}
0, & r=0, \\
r\left( 1-\log r\right) , & 0<r<1, \\
1, & r\geq 1.%
\end{array}%
\right.
\end{equation*}

By comparison with the solution of the differential equation\footnote{%
\begin{equation*}
\frac{\xi \left( t\right) }{\alpha }=\left\{
\begin{array}{lll}
\left( \frac{\xi \left( 0\right) }{\alpha }\right)
^{e^{tC_{1}}}e^{1-e^{tC_{1}}}, & \frac{\xi \left( 0\right) }{\alpha }<1, &
\\
\frac{\xi \left( 0\right) }{\alpha }-C_{1}t, & \frac{\xi \left( 0\right) }{%
\alpha }\geq 1, & t<t^{\ast \ast }, \\
e^{1-e^{tC_{1}-\frac{\xi \left( 0\right) }{\alpha }+1}}, & \frac{\xi \left(
0\right) }{\alpha }\geq 1, & t\geq t^{\ast \ast },%
\end{array}%
\right.
\end{equation*}%
where%
\begin{align*}
& C_{1}=C\frac{1}{\alpha ^{2}}\left\Vert q^{in}\right\Vert _{M}, \\
t^{\ast \ast }& =\frac{1}{C_{1}}\left( \frac{\xi \left( 0\right) }{\alpha }%
-1\right) .
\end{align*}%
}
\begin{align*}
& \frac{d}{dt}\xi \left( t\right) =-C\frac{1}{\alpha }\varphi \left( \frac{%
\xi \left( t\right) }{\alpha }\right) \left\Vert q^{in}\right\Vert _{%
\mathcal{M}}, \\
& \xi \left( 0\right) =\left\vert x\left( \Gamma ,0\right) -x\left( \Gamma
^{\prime },0\right) \right\vert ,
\end{align*}%
we can choose $\varepsilon $ small enough, $\varepsilon <e^{1-e^{tC_{1}}}$,
where $C_{1}=C\frac{1}{\alpha ^{2}}\left\Vert q^{in}\right\Vert _{M}=\Gamma
_{1}-\Gamma _{0}$, e.g., $\varepsilon =e^{-e^{tC_{1}}}$, such that, for $%
\frac{\left\vert x\left( \Gamma ,t\right) -x\left( \Gamma ^{\prime
},t\right) \right\vert }{\alpha }<\varepsilon $, we have that $\frac{%
\left\vert x\left( \Gamma ,0\right) -x\left( \Gamma ^{\prime },0\right)
\right\vert }{\alpha }=\frac{r\left( 0\right) }{\alpha }<1$.\footnote{%
Otherwise, $\frac{\left\vert x\left( \Gamma ,0\right) -x\left( \Gamma
^{\prime },0\right) \right\vert }{\alpha }=\frac{r\left( 0\right) }{\alpha }%
\geq 1$, hence
\begin{equation*}
\varepsilon >\frac{\left\vert x\left( \Gamma ,t\right) -x\left( \Gamma
^{\prime },t\right) \right\vert }{\alpha }\geq \frac{r\left( t\right) }{%
\alpha }\geq \left\{
\begin{array}{ll}
\frac{r\left( 0\right) }{\alpha }-C_{1}t, & t<t^{\ast \ast }, \\
e^{1-e^{tC_{1}-\frac{r\left( 0\right) }{\alpha }+1}}, & t\geq t^{\ast \ast }.%
\end{array}%
\right.
\end{equation*}%
Let $t\geq t^{\ast \ast }$, then we have $\varepsilon \geq e^{1-e^{tC_{1}}}$%
, a contradiction. Otherwise $t<t^{\ast \ast }=\frac{1}{C_{1}}\left( \frac{%
r\left( 0\right) }{\alpha }-1\right) $, hence $C_{1}t<\left( \frac{r\left(
0\right) }{\alpha }-1\right) $%
\begin{equation*}
\varepsilon >\frac{r\left( t\right) }{\alpha }\geq \frac{r\left( 0\right) }{%
\alpha }-C_{1}t>\frac{r\left( 0\right) }{\alpha }-\frac{r\left( 0\right) }{%
\alpha }+1=1,
\end{equation*}%
a contradiction.} Hence
\begin{align}
\frac{\left\vert x\left( \Gamma ,t\right) -x\left( \Gamma ^{\prime
},t\right) \right\vert }{\alpha }& \geq \frac{r\left( t\right) }{\alpha }%
=\left( \frac{r\left( 0\right) }{\alpha }\right)
^{e^{tC_{1}}}e^{1-e^{tC_{1}}}  \label{eq:trajBelowBound} \\
& =\left( \frac{\left\vert x\left( \Gamma ,0\right) -x\left( \Gamma ^{\prime
},0\right) \right\vert }{\alpha }\right) ^{e^{tC_{1}}}e^{1-e^{tC_{1}}}.
\notag
\end{align}%
Now, using also that $|x_{0}|_{\ast }$ is bounded away from zero, we can
bound $\frac{\left\vert x\left( \Gamma ,t\right) -x\left( \Gamma ^{\prime
},t\right) \right\vert }{\alpha }$ from below, using %
\eqref{eq:trajBelowBound},
\begin{equation*}
1>\varepsilon >\frac{\left\vert x\left( \Gamma ,t\right) -x\left( \Gamma
^{\prime },t\right) \right\vert }{\alpha }\geq \left( \frac{\left\vert
x\left( \Gamma ,0\right) -x\left( \Gamma ^{\prime },0\right) \right\vert }{%
\alpha }\right) ^{e^{tC_{1}}}e^{1-e^{tC_{1}}}\geq \left\vert
x_{0}\right\vert _{\ast }^{e^{tC_{1}}}\left( \frac{\left\vert \Gamma -\Gamma
^{\prime }\right\vert }{\alpha }\right) ^{e^{tC_{1}}}e^{1-e^{tC_{1}}},
\end{equation*}%
which in turn implies the bound (using also \eqref{eq:K_alpha} and %
\eqref{eq:D2psi_bound})
\begin{align*}
I_{1}& \leq \int_{\left( \Gamma _{0},\Gamma _{1}\right) \cap \frac{%
\left\vert x\left( \Gamma ,t\right) -x\left( \Gamma ^{\prime },t\right)
\right\vert }{\alpha }<\varepsilon }\left( \frac{1}{2\pi }\frac{1}{\alpha
^{2}}\left\vert \log \frac{\left\vert x\left( \Gamma ,t\right) -x\left(
\Gamma ^{\prime },t\right) \right\vert }{\alpha }\right\vert +\frac{C}{%
\alpha ^{2}}\right) d\Gamma ^{\prime } \\
& \leq \frac{1}{\alpha }C\left( \left\vert x_{0}\right\vert _{\ast }\right)
\left( e^{tC_{1}}+1\right) .
\end{align*}%
While to bound $I_{2}$, we use the boundness of $\left\vert \nabla
_{x}K^{\alpha }\left( x\left( \Gamma ,t\right) -x\left( \Gamma ^{\prime
},t\right) \right) \right\vert $ in $\{\Gamma ^{\prime }\in \left( \Gamma
_{0},\Gamma _{1}\right) :\frac{\left\vert x\left( \Gamma ,t\right) -x\left(
\Gamma ^{\prime },t\right) \right\vert }{\alpha }\geq \varepsilon \}$.
\begin{align*}
I_{2}& \leq \sup_{\frac{\left\vert x\left( \Gamma ,t\right) -x\left( \Gamma
^{\prime },t\right) \right\vert }{\alpha }\geq \varepsilon }\left\vert
\nabla _{x}K^{a}\left( x\left( \Gamma ,t\right) -x\left( \Gamma ^{\prime
},t\right) \right) \right\vert \int_{\Gamma _{0}}^{\Gamma _{1}}d\Gamma
^{\prime } \\
& \leq C\frac{1}{\alpha ^{2}}\left( \left\vert \log \varepsilon \right\vert
+1\right) \left( \Gamma _{1}-\Gamma _{0}\right) \\
& =C_{1}\left( e^{tC_{1}}+1\right) .
\end{align*}
\end{proof}

Now, the bound on $\left\Vert x\left( \cdot ,t\right) \right\Vert _{C^{0}}$
on $\left[ 0,T_{\max }\right) $ follows from $\frac{dx}{dt}\left( \Gamma
,t\right) =u\left( x\left( \Gamma ,t\right) ,t\right) $ and the fact that
\begin{equation*}
\left\vert u\left( x\left( \Gamma ,t\right) ,t\right) \right\vert \leq
\int_{\Gamma _{0}}^{\Gamma _{1}}\left\vert K^{a}\left( x\left( \Gamma
,t\right) -x\left( \Gamma ^{\prime },t\right) \right) \right\vert d\Gamma
^{\prime }\leq \frac{C}{\alpha }\left( \Gamma _{1}-\Gamma _{0}\right) ,
\end{equation*}%
due to the boundness of $K^{a}$ (see \eqref{eq:K_alpha},%
\eqref{eq:Dpsi_bounded}). Also, by Gr\"{o}nwall inequality the bound %
\eqref{eq:Du_bound} provides bounds on $\frac{1}{\left\vert x\left( \cdot
,t\right) \right\vert _{\ast }}$ and $\left\vert x\left( \cdot ,t\right)
\right\vert _{1}$ on $\left[ 0,T_{\max }\right) $.

Finally, for the initial data in $C^{1,\beta }\left( \left( \Gamma
_{0},\Gamma _{1}\right) \right) $, the bound \eqref{eq:Du_bound} provides
bound on $\left\Vert \frac{dx}{d\Gamma }\left( \cdot ,t\right) \right\Vert
_{C^{0}}$ on $\left[ 0,T_{\max }\right) $ by Gr\"{o}nwall inequality. While
the bound on $\left\vert \frac{dx}{d\Gamma }\left( \cdot ,t\right)
\right\vert _{\beta }$ on $\left[ 0,T_{\max }\right) $ is a consequence of
\begin{equation*}
\frac{d}{dt}x_{\Gamma }\left( \Gamma ,t\right) =\nabla _{x}u\left( x\left(
\Gamma ,t\right) ,t\right) \cdot x_{\Gamma }\left( \Gamma ,t\right) ,
\end{equation*}%
the bound (which is shown in a local existence part, see %
\eqref{eq:grad_K_alpha_Holder})%
\begin{equation*}
\left\vert \nabla _{x}u\left( x\left( \cdot ,t\right) ,t\right) \right\vert
_{\beta }\leq C\left( \frac{1}{\alpha },\left\Vert x_{\Gamma }\left( \cdot
,t\right) \right\Vert _{L^{\infty }},\left\vert x\left( \cdot ,t\right)
\right\vert _{\ast },\Gamma _{1},\Gamma _{0}\right) ,
\end{equation*}%
\eqref{eq:Du_bound} and the Gr\"{o}nwall inequality.

This yields global in time existence and uniqueness of $\mathrm{Lip}$ and $%
C^{1,\beta },0\leq \beta <1$, solutions of \eqref{grp:BR_alpha_onBanach}.

\subsection{Step 3. Higher regularity for closed curves.}

Now we show the higher regularity for an initially closed curve $x_{0}\left(
\Gamma \right) \in C^{n,\beta }\left( S^{1}\right) \cap \left\{ \left\vert
x\right\vert _{\ast }>0\right\} $, $n\geq 1$, $0<\beta <1$. We remark that
the high derivatives of the kernel $K^{\alpha }\left( x\right) $ are
singular at the origin, thus the condition on closedness of the curve.

To provide an \textit{a priori} bound for higher derivatives in
terms of lower ones, we show that for $x\in  C^{n,\beta }\left(
S^{1}\right) \cap \left\{\left\vert x\right\vert _{1}<M,\left\vert
x\right\vert _{\ast }>\frac{1}{M}\right\} $, the map $u$ defined by
\eqref{eq:vel_map} satisfies
\begin{equation*}
\left\Vert u\left( x\right) \right\Vert _{n,\beta }\leq C\left( \frac{1}{%
\alpha },M,\left\Vert x\right\Vert _{n-1,\beta },\frac{1}{^{\beta }}\right)
\left\Vert x\right\Vert _{n,\beta },
\end{equation*}%
hence by Gr\"{o}nwall inequality and the induction argument, it is enough to
control $\left\vert x\right\vert _{\ast }$ and $\left\Vert x\right\Vert
_{1,\beta }$, to guarantee that $x\left( \Gamma \right) \in C^{n,\beta
}\left( S^{1}\right) $, for all $n\geq 1$, (and consequently in $C^{\infty
}\left( S^{1}\right) $, whenever $x_{0}\in C^{\infty }\left( S^{1}\right)
\cap \left\{ \left\vert x\right\vert _{\ast }>0\right\} $).

\begin{lemma}
Let $V$ be the space $ C^{n,\beta }\left( S^{1}\right) $, $n\geq 1$,
$0<\beta <1 $, and let $u$ and $K^{M}$ be as defined in Proposition
\ref{prop:br_alpha_Lip_map}. Then for $x\in K^{M}$
\begin{equation*}
\left\Vert u\left( x\right) \right\Vert _{n,\beta }\leq C\left( \frac{1}{%
\alpha },M,\left\Vert x\right\Vert _{n-1,\beta },\frac{1}{^{\beta
}}\right) \left\Vert x\right\Vert _{n,\beta }.
\end{equation*}
\end{lemma}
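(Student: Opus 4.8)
The plan is to argue by induction on $n\ge1$; the base case $n=1$ is the $C^{1,\beta}$ assertion of Proposition~\ref{prop:br_alpha_Lip_map}. First I would record the size of the higher kernel derivatives: differentiating \eqref{eq:Psi_DPsi_def} and using the standard small-argument expansion of $K_{1}$ one gets, for $m\ge2$ and $r/\alpha\to0$, $|D^{m}\Psi^{\alpha}(r)|\le C\alpha^{-2}r^{-(m-2)}(1+|\log(r/\alpha)|)+\cdots$, hence by \eqref{eq:K_alpha} $|D^{m}K^{\alpha}(z)|\lesssim\alpha^{-2}|z|^{-(m-1)}(1+|\log(|z|/\alpha)|)$ near the origin (the cases $m=1,2,3$ being \eqref{eq:Der_psi_at_origin}, \eqref{eq:D2psi_bound}, \eqref{eq:D3psi_bound}); all of these singularities are to be controlled through the chord arc inequality \eqref{eq:inequality_in_SM}, which on $S^{1}$ reads $|\Gamma-\Gamma'|/(M\alpha)\le|x(\Gamma)-x(\Gamma')|/\alpha\le M|\Gamma-\Gamma'|/\alpha$.

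For the inductive step I would differentiate $u(x(\Gamma))=\int_{S^{1}}K^{\alpha}(x(\Gamma)-x(\Gamma'))\,d\Gamma'$ $n$ times under the integral sign (justified by dominated convergence, as for the first derivative in Proposition~\ref{prop:br_alpha_Lip_map}) and expand by the Fa\`{a} di Bruno formula,
\begin{equation*}
\frac{d^{n}}{d\Gamma^{n}}u(x(\Gamma))=\sum_{m=1}^{n}\ \sum_{\substack{j_{1}+\cdots+j_{m}=n\\ j_{\ell}\ge1}}c_{\vec{j}}\int_{S^{1}}(D^{m}K^{\alpha})(x(\Gamma)-x(\Gamma'))\bigl[x^{(j_{1})}(\Gamma),\dots,x^{(j_{m})}(\Gamma)\bigr]\,d\Gamma'.
\end{equation*}
The term $m=1$ equals $\int_{S^{1}}\nabla K^{\alpha}(x(\Gamma)-x(\Gamma'))\,x^{(n)}(\Gamma)\,d\Gamma'$: it is linear in $x^{(n)}$ against a log-singular, hence $\Gamma'$-integrable, kernel, so it is bounded by $C(\tfrac1\alpha,M,\Gamma_{1},\Gamma_{0})\|x^{(n)}\|_{C^{0}}$, exactly as in \eqref{eq:grad_K_alpha_bnd}. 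For $m\ge2$ every $j_{\ell}\le n-1$, and I would expand $[x^{(j_{1})}(\Gamma),\dots]=\sum_{S\subseteq\{1,\dots,m\}}\bigl[\text{factor }x^{(j_{\ell})}(\Gamma)-x^{(j_{\ell})}(\Gamma')\text{ for }\ell\in S,\ \text{factor }x^{(j_{\ell})}(\Gamma')\text{ for }\ell\notin S\bigr]$: each difference $x^{(j_{\ell})}(\Gamma)-x^{(j_{\ell})}(\Gamma')$ is $O(|\Gamma-\Gamma'|)$ (and the last one may be kept as $O(|\Gamma-\Gamma'|^{1+\beta})$) times a norm $\le\|x\|_{n,\beta}$, with at most one factor of order exactly $n$. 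For $|S|\ge m-1$ these powers already absorb the $|\Gamma-\Gamma'|^{-(m-1)}$ singularity and the $\Gamma'$-integral converges; the remaining pieces ($|S|\le m-2$, in particular the purely-$\Gamma'$ piece $S=\emptyset$) are to be treated by integration by parts in $\Gamma'$ on a slot that is a $\Gamma'$-derivative of $x$, using that on the closed curve $S^{1}$ there are no endpoint contributions and the two-sided limits at $\Gamma'=\Gamma$ cancel; arranging the integration by parts appropriately this trades $D^{m}K^{\alpha}$ for $D^{m-1}K^{\alpha}$, and iterating telescoping and integration by parts a finite number of times reduces every term to one of the two integrable shapes above, with a constant $C(\tfrac1\alpha,M,\|x\|_{n-1,\beta},\tfrac1\beta)$ (the $\tfrac1\beta$ coming from integrals $\int|\Gamma-\Gamma'|^{\beta-1}\,d\Gamma'$) and with $\|x\|_{n,\beta}$ entering only to the first power. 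This yields $\|\tfrac{d^{n}}{d\Gamma^{n}}u(x)\|_{C^{0}}\le C(\tfrac1\alpha,M,\|x\|_{n-1,\beta},\tfrac1\beta)\|x\|_{n,\beta}$; the lower-order norms $\|\tfrac{d^{k}}{d\Gamma^{k}}u(x)\|_{C^{0}}$, $k\le n-1$, are the same estimate at level $k$ and are subsumed by the induction hypothesis.

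Finally, for the H\"older seminorm I would estimate $\bigl|\tfrac{d^{n}}{d\Gamma^{n}}u(x(\Gamma))-\tfrac{d^{n}}{d\Gamma^{n}}u(x(\bar\Gamma))\bigr|$ term by term, applied to each term already in integrable form, by the near/far splitting of $S^{1}$ relative to $r=|\Gamma-\bar\Gamma|/\alpha$ used in the proof of \eqref{eq:grad_K_alpha_Holder} and in the estimates of $I_{21}$--$I_{223}$ there: on the near part one uses the size bounds directly, on the far part one applies the mean value theorem to differences of $D^{m}K^{\alpha}$ (gaining one order of kernel derivative and a factor $|\Gamma-\bar\Gamma|$) together with the chord arc lower bound; since every term already carries enough vanishing-difference factors, all integrals converge and one gets $|\tfrac{d^{n}}{d\Gamma^{n}}u(x)|_{\beta}\le C(\tfrac1\alpha,M,\|x\|_{n-1,\beta},\tfrac1\beta)\|x\|_{n,\beta}$. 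Summing the $C^{0}$ and $\beta$ contributions gives the claim. I expect the main obstacle to be the combinatorial bookkeeping in the second paragraph: verifying, uniformly in $m$ and $\vec{j}$, that telescoping together with integration by parts always supplies at least the $m-1$ (plus an extra $\beta$) powers of $|\Gamma-\Gamma'|$ needed to match the singularity of $D^{m}K^{\alpha}$, that the integration-by-parts steps can be arranged so as never to raise the kernel order, and that no more than one factor of order $n$ ever survives so that the bound is linear in $\|x\|_{n,\beta}$; this is precisely where closedness of the curve is essential and where the hypotheses $n\ge1$, $0<\beta<1$ enter.
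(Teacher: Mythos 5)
Your overall architecture (Fa\`{a} di Bruno, gain powers of $\left\vert \Gamma -\Gamma ^{\prime }\right\vert $ by telescoping the slots, remove the rest by integration by parts on the closed curve) is a genuinely different route from the paper's, and several of its ingredients are sound: the $m=1$ term is handled exactly as in \eqref{eq:grad_K_alpha_bnd}; a difference $x^{(j_{\ell })}(\Gamma )-x^{(j_{\ell })}(\Gamma ^{\prime })$ with $j_{\ell }\leq n-1$ does gain a full power of $\left\vert \Gamma -\Gamma ^{\prime }\right\vert $ (or $\left\vert \Gamma -\Gamma ^{\prime }\right\vert ^{\beta }$ at the cost of only $\left\Vert x\right\Vert _{n-1,\beta }$); the $1/\beta $ does come from $\int \left\vert \Gamma -\Gamma ^{\prime }\right\vert ^{\beta -1}d\Gamma ^{\prime }$; and closedness is what removes endpoint contributions. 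But the step you yourself flag as the ``main obstacle'' is exactly where the argument breaks as stated. Integration by parts lowers the kernel order only when one of the $\Gamma ^{\prime }$-evaluated slots is $x^{\prime }(\Gamma ^{\prime })$, because only then is $D^{m}K^{\alpha }\left( x\left( \Gamma \right) -x\left( \Gamma ^{\prime }\right) \right) [x^{\prime }(\Gamma ^{\prime }),\cdot ,\dots ]$ a perfect $\Gamma ^{\prime }$-derivative of $D^{m-1}K^{\alpha }$. For the purely-$\Gamma ^{\prime }$ piece of a term with all $j_{\ell }\geq 2$ --- e.g. $n=4$, $m=2$, $j_{1}=j_{2}=2$, giving $\mathrm{p.v.}\int D^{2}K^{\alpha }\left( x\left( \Gamma \right) -x\left( \Gamma ^{\prime }\right) \right) [x^{\prime \prime }(\Gamma ^{\prime }),x^{\prime \prime }(\Gamma ^{\prime })]\,d\Gamma ^{\prime }$, with a nonintegrable $\left\vert \Gamma -\Gamma ^{\prime }\right\vert ^{-1}$ kernel and no vanishing factor --- writing $x^{\prime \prime }=\frac{d}{d\Gamma ^{\prime }}x^{\prime }$ and integrating by parts produces a $D^{3}K^{\alpha }$ term, i.e.\ raises the order; one can recover by a further integration by parts that reproduces the original integral (a self-referential identity), but none of this is carried out, and ``arranged so as never to raise the kernel order'' cannot be taken for granted. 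Moreover the boundary terms at $\Gamma ^{\prime }=\Gamma \pm \varepsilon \alpha $ do not individually vanish (each is $O(\alpha ^{-2}\left\vert \log \varepsilon \right\vert )$); their mutual cancellation rests on the evenness in $y$ of the singular part of $\nabla K^{\alpha }(y)$, which is precisely the content of Lemma \ref{lemma:d2u_dGamma2} in the appendix and must be invoked, not asserted.

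The paper's resolution of the worst term is different and worth importing. It keeps both slots at $\Gamma $, extracts the exact leading part $\frac{1}{4\pi \alpha ^{2}}\sigma _{ij}(z)/\left\vert z\right\vert $ of $\partial _{x_{i}}\partial _{x_{j}}K^{\alpha }$, and then subtracts the first-order Taylor polynomial of the curve via \eqref{eq:HolderEstimate}: the remainder $J_{1}$ is $O(\left\vert \Gamma -\Gamma ^{\prime }\right\vert ^{\beta -1})$, hence integrable (this is where $1/\beta $ and $\left\Vert x\right\Vert _{1,\beta }$ enter), while the main part collapses algebraically to $J_{2}=\frac{1}{\Gamma ^{\prime }-\Gamma }\left( \frac{dx}{d\Gamma }\left( \Gamma \right) \right) ^{\perp }$, an exactly odd kernel whose principal value over a window symmetric in $\Gamma ^{\prime }$ vanishes --- and it is the closedness of the curve that lets the window be chosen symmetric uniformly in $\Gamma $. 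This mechanism is insensitive to which bounded coefficients multiply $\partial _{x_{i}}\partial _{x_{j}}K^{\alpha }$ from outside, so it disposes of all the $m=2$, $S=\emptyset $ terms of your expansion at once; combined with your telescoping for the genuinely lower-order pieces it would close the induction. As written, however, your proposal defers rather than proves the desingularization of the principal value, which is the actual content of the lemma.
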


\begin{proof}
We show the proof for $n=2$, the proof for general $n$ is similar. The
derivative of $u$ with respect to $\Gamma $ (in the sense of distributions)
satisfies (see Appendix, Lemma \ref{lemma:d2u_dGamma2})
\begin{align*}
\frac{d^{2}}{d\Gamma ^{2}}u\left( x\left( \Gamma \right) \right) &
=\int_{0}^{2\pi }\nabla K^{\alpha }\left( x\left( \Gamma \right) -x\left(
\Gamma ^{\prime }\right) \right) \frac{d^{2}x}{d\Gamma ^{2}}\left( \Gamma
\right) d\Gamma ^{\prime } \\
& +\mathrm{p.v.}\int_{0}^{2\pi }\sum_{i,j=1}^{2}\partial _{x_{i}}\partial
_{x_{j}}K^{\alpha }\left( x\left( \Gamma \right) -x\left( \Gamma ^{\prime
}\right) \right) \frac{dx_{i}}{d\Gamma }\left( \Gamma \right) \frac{dx_{j}}{%
d\Gamma }\left( \Gamma \right) d\Gamma ^{\prime } \\
& =I_{1}+I_{2}.
\end{align*}%
$I_{1}$ can be bounded using similar arguments as for %
\eqref{eq:grad_K_alpha_bnd},
\begin{equation*}
\left\vert I_{1}\right\vert \leq \frac{1}{\alpha ^{2}}C\left( \frac{1}{%
\alpha },M\right) \left\Vert x\right\Vert _{2,\beta }.
\end{equation*}%
We write $I_{2}$ as
\begin{align*}
I_{2}& =\mathrm{p.v.}\left( \int_{\frac{\left\vert \Gamma -\Gamma ^{\prime
}\right\vert }{\alpha }<\varepsilon }+\int_{\left( 0,2\pi \right) \backslash
\left\{ \frac{\left\vert \Gamma -\Gamma ^{\prime }\right\vert }{\alpha }%
<\varepsilon \right\} }\right) \sum_{i,j=1}^{2}\partial _{x_{i}}\partial
_{x_{j}}K^{\alpha }\left( x\left( \Gamma \right) -x\left( \Gamma ^{\prime
}\right) \right) \frac{dx_{i}}{d\Gamma }\left( \Gamma \right) \frac{dx_{j}}{%
d\Gamma }\left( \Gamma \right) d\Gamma ^{\prime } \\
& =I_{21}+I_{22},
\end{align*}%
where, because the curve is closed, we can fix a small $\varepsilon
<\pi /2$ independent of $\Gamma $, by taking $I_{22}$
$=\int_{D\backslash
\left\{ \frac{\left\vert \Gamma -\Gamma ^{\prime }\right\vert }{\alpha }%
<\varepsilon \right\} }$, where $D=\left( 0,2\pi \right) $ if $\varepsilon
\alpha <\Gamma $ $<2\pi -\varepsilon \alpha $, $D=\left( -\pi ,\pi \right) $
if $0\leq \Gamma \leq \varepsilon \alpha $, or $D=\left( \pi ,3\pi \right) $
if $2\pi -\varepsilon \alpha \leq \Gamma \leq 2\pi $. Treating $I_{22}$ as
in the local existence proof, we have%
\begin{align*}
\left\vert I_{22}\right\vert & \leq C\left( M,\frac{1}{\alpha }\right)
\left\Vert x\right\Vert _{1,\beta }^{3}\int_{D\cap \frac{\left\vert \Gamma
-\Gamma ^{\prime }\right\vert }{\alpha }\geq \varepsilon }\left( \frac{%
\alpha }{\left\vert \Gamma -\Gamma ^{\prime }\right\vert }+1\right) d\Gamma
^{\prime } \\
& \leq C\left( \frac{1}{\alpha },M,\left\Vert x\right\Vert _{1,\beta
}\right) \left\vert \log \varepsilon \right\vert .
\end{align*}%
For $I_{21}$ we have that 
\begin{align*}
I_{21}& =\frac{1}{4\pi \alpha ^{2}}\mathrm{p.v.}\int_{\frac{\left\vert
\Gamma -\Gamma ^{\prime }\right\vert }{\alpha }<\varepsilon }\sum_{i,j}\frac{%
\sigma _{ij}\left( x\left( \Gamma \right) -x\left( \Gamma ^{\prime }\right)
\right) \frac{dx_{i}}{d\Gamma }\left( \Gamma \right) }{\left\vert x\left(
\Gamma \right) -x\left( \Gamma ^{\prime }\right) \right\vert }\frac{dx_{j}}{%
d\Gamma }\left( \Gamma \right) d\Gamma ^{\prime } \\
& +\frac{1}{4\pi \alpha ^{2}}\int_{\frac{\left\vert \Gamma -\Gamma ^{\prime
}\right\vert }{\alpha }<\varepsilon }\sum_{i,j}O\left( \left\vert \frac{%
\left\vert x\left( \Gamma \right) -x\left( \Gamma ^{\prime }\right)
\right\vert }{\alpha ^{2}}\log \frac{\left\vert x\left( \Gamma \right)
-x\left( \Gamma ^{\prime }\right) \right\vert }{\alpha }\right\vert \right)
\frac{dx_{i}}{d\Gamma }\left( \Gamma \right) \frac{dx_{j}}{d\Gamma }\left(
\Gamma \right) d\Gamma ^{\prime } \\
& =I_{211}+I_{212},
\end{align*}%
where
\begin{equation}
\sigma _{11}\left( x\right) =\frac{1}{\left\vert x\right\vert ^{3}}%
\begin{pmatrix}
-x_{2}\left( x_{1}^{2}-x_{2}^{2}\right) \\
-x_{1}(x_{1}^{2}+3x_{2}^{2})%
\end{pmatrix}%
,\text{ }\sigma _{12}\left( x\right) =\sigma _{21}\left( x\right) =\frac{1}{%
\left\vert x\right\vert ^{3}}%
\begin{pmatrix}
x_{1}\left( x_{1}^{2}-x_{2}^{2}\right) \\
x_{2}\left( x_{1}^{2}-x_{2}^{2}\right)%
\end{pmatrix}%
,\sigma _{22}\left( x\right) =\frac{1}{\left\vert x\right\vert ^{3}}%
\begin{pmatrix}
x_{2}\left( 3x_{1}^{2}+x_{2}^{2}\right) \\
-x_{1}\left( x_{1}^{2}-x_{2}^{2}\right)%
\end{pmatrix}%
.  \label{eq:sigma}
\end{equation}%
$I_{212}$ is not a singular integral and due to
\begin{equation*}
\left\vert x\left( \Gamma \right) -x\left( \Gamma ^{\prime }\right)
\right\vert \leq \left\Vert x_{\Gamma }\right\Vert _{C^{0}}\left\vert \Gamma
-\Gamma ^{\prime }\right\vert ,
\end{equation*}%
we obtain that%
\begin{equation*}
\left\vert I_{212}\right\vert \leq C\frac{1}{\alpha ^{2}}\left\Vert
x\right\Vert _{1,\beta }^{3}.
\end{equation*}%
We use the observation
\begin{equation}
\left\vert f\left( x\right) -f\left( y\right) -\left( x-y\right) f^{\prime
}\left( x\right) \right\vert \leq \left\vert x-y\right\vert ^{1+\beta
}\left\vert f^{\prime }\right\vert _{\beta }  \label{eq:HolderEstimate}
\end{equation}%
to desingularize the $I_{211}$. We rewrite%
\begin{align*}
\sum_{i,j}\frac{\sigma _{ij}\left( x\left( \Gamma \right) -x\left( \Gamma
^{\prime }\right) \right) \frac{dx_{i}}{d\Gamma }\left( \Gamma \right) }{%
\left\vert x\left( \Gamma \right) -x\left( \Gamma ^{\prime }\right)
\right\vert }\frac{dx_{j}}{d\Gamma }\left( \Gamma \right) &=\sum_{i,j}\frac{%
\sigma _{ij}\left( x\left( \Gamma \right) -x\left( \Gamma ^{\prime }\right)
\right) \frac{dx_{i}}{d\Gamma }\left( \Gamma \right) \left( \Gamma -\Gamma
^{\prime }\right) }{\left\vert x\left( \Gamma \right) -x\left( \Gamma
^{\prime }\right) \right\vert \left( \Gamma -\Gamma ^{\prime }\right) }\frac{%
dx_{j}}{d\Gamma }\left( \Gamma \right) \\
&=\sum_{i,j}\frac{\sigma _{ij}\left( x\left( \Gamma \right) -x\left( \Gamma
^{\prime }\right) \right) \left( \frac{dx_{i}}{d\Gamma }\left( \Gamma
\right) \left( \Gamma -\Gamma ^{\prime }\right) -x_{i}\left( \Gamma \right)
+x_{i}\left( \Gamma ^{\prime }\right) \right) }{\left\vert x\left( \Gamma
\right) -x\left( \Gamma ^{\prime }\right) \right\vert \left( \Gamma -\Gamma
^{\prime }\right) }\frac{dx_{j}}{d\Gamma }\left( \Gamma \right) \\
&+\sum_{i,j}\frac{\sigma _{ij}\left( x\left( \Gamma \right) -x\left( \Gamma
^{\prime }\right) \right) \left( x_{i}\left( \Gamma \right) -x_{i}\left(
\Gamma ^{\prime }\right) \right) }{\left\vert x\left( \Gamma \right)
-x\left( \Gamma ^{\prime }\right) \right\vert \left( \Gamma -\Gamma ^{\prime
}\right) }\frac{dx_{j}}{d\Gamma }\left( \Gamma \right) \\
&=J_{1}+J_{2}.
\end{align*}%
Observe that $J_{2}=\frac{1}{\left( \Gamma ^{\prime } -\Gamma\right) }\left(%
\frac{dx}{d\Gamma }\left( \Gamma \right)\right)^{\perp}$ and $J_{1}\leq
C\left( M\right) \left\Vert x\right\Vert _{1,\beta }^{2}\left\vert \Gamma
-\Gamma ^{\prime }\right\vert ^{-1+\beta }$ due to $\left\vert \sigma
_{ij}\right\vert \leq 1$ (see \eqref{eq:sigma}) and \eqref{eq:HolderEstimate}%
. Hence
\begin{align*}
\left\vert I_{211}\right\vert &=\left\vert \frac{1}{4\pi \alpha ^{2}}\int_{%
\frac{\left\vert \Gamma -\Gamma ^{\prime }\right\vert }{\alpha }<\varepsilon
} J_{1}d\Gamma ^{\prime }+\frac{1}{4\pi \alpha ^{2}} \left(\frac{dx\left(
\Gamma \right)}{d\Gamma }\right)^{\perp} \mathrm{p.v.}\int_{\frac{
\left\vert \Gamma -\Gamma ^{\prime }\right\vert }{\alpha }<\varepsilon }%
\frac{1}{\left( \Gamma^{\prime } -\Gamma \right) }d\Gamma ^{\prime }
\right\vert \\
&\leq C\left( M\right) \frac{1}{\alpha ^{2-\beta }}\left\Vert x\right\Vert
_{1,\beta }^{2}\frac{1}{^{\beta }}\varepsilon ^{\beta }.
\end{align*}%
Summing up, we have that%
\begin{equation*}
\left\vert \frac{d^{2}}{d\Gamma ^{2}}u\left( x\left( \Gamma \right) \right)
\right\vert \leq C\left( \frac{1}{\alpha },M,\left\Vert x\right\Vert
_{1,\beta },\left\vert x\right\vert _{\ast }^{-1},\frac{1}{^{\beta }}\right)
\left\Vert x\right\Vert _{2,\beta }.
\end{equation*}%
Using the same ideas we also bound $\left\vert \frac{d^{2}}{d\Gamma ^{2}}%
u\left( x\left( \Gamma \right) \right) \right\vert _{\beta }$.
\end{proof}

\section*{Appendix}

\setcounter{section}{1} \setcounter{theorem}{0} \renewcommand{\thesection}{%
\Alph{section}}

\begin{lemma}
\label{lemma:d2u_dGamma2}Let $x\in C^{2.\beta }\left( \left( \Gamma
_{0},\Gamma _{1}\right) \right) \cap \left\{ \left\vert x\right\vert _{\ast
}>0\right\} $ then%
\begin{align*}
\frac{d^{2}u}{d\Gamma ^{2}}\left( x\left( \Gamma \right) \right) &
=\int_{\Gamma _{0}}^{\Gamma _{1}}\nabla K^{\alpha }\left( x\left( \Gamma
\right) -x\left( \Gamma ^{\prime }\right) \right) \frac{d^{2}x}{d\Gamma ^{2}}%
\left( \Gamma \right) d\Gamma ^{\prime } \\
& +\mathrm{p.v.}\int_{\Gamma _{0}}^{\Gamma _{1}}\sum_{i,j=1}^{2}\partial
_{x_{i}}\partial _{x_{j}}K^{\alpha }\left( x\left( \Gamma \right) -x\left(
\Gamma ^{\prime }\right) \right) \frac{dx_{i}}{d\Gamma }\left( \Gamma
\right) \frac{dx_{j}}{d\Gamma }\left( \Gamma \right) d\Gamma ^{\prime }
\end{align*}%
(in the sense of distributions).
\end{lemma}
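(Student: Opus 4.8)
The plan is to differentiate the first-order identity $\frac{du}{d\Gamma}(x(\Gamma))=\int_{\Gamma_0}^{\Gamma_1}\nabla K^\alpha\bigl(x(\Gamma)-x(\Gamma')\bigr)x_\Gamma(\Gamma)\,d\Gamma'$, already obtained in the proof of Proposition~\ref{prop:br_alpha_Lip_map} by dominated convergence (legitimate because $|\nabla K^\alpha(z)|\le C\alpha^{-2}\bigl(1+|\log(|z|/\alpha)|\bigr)$ by \eqref{eq:K_alpha}--\eqref{eq:Der_psi_at_origin}, while the chord arc bound $|x(\Gamma)-x(\Gamma')|\ge|x|_\ast|\Gamma-\Gamma'|$ makes this kernel $L^1$ in $\Gamma'$). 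One cannot simply differentiate once more under the integral sign, because that produces $D^2K^\alpha\bigl(x(\Gamma)-x(\Gamma')\bigr)$, which is $O(|\Gamma-\Gamma'|^{-1})$ near the diagonal and \emph{not} integrable. So the first step is to peel off the singular direction: writing $x_\Gamma(\Gamma)=\bigl(x_\Gamma(\Gamma)-x_\Gamma(\Gamma')\bigr)+x_\Gamma(\Gamma')$ and using $\nabla K^\alpha\bigl(x(\Gamma)-x(\Gamma')\bigr)x_\Gamma(\Gamma')=-\frac{d}{d\Gamma'}K^\alpha\bigl(x(\Gamma)-x(\Gamma')\bigr)$, the fundamental theorem of calculus applied to the continuous, $L^1$-differentiable (hence absolutely continuous, even across $\Gamma'=\Gamma$) function $\Gamma'\mapsto K^\alpha(x(\Gamma)-x(\Gamma'))$ gives $\frac{du}{d\Gamma}(x(\Gamma))=P(\Gamma)+Q(\Gamma)$ with $P(\Gamma)=\int_{\Gamma_0}^{\Gamma_1}\nabla K^\alpha\bigl(x(\Gamma)-x(\Gamma')\bigr)\bigl(x_\Gamma(\Gamma)-x_\Gamma(\Gamma')\bigr)d\Gamma'$ and $Q(\Gamma)=K^\alpha\bigl(x(\Gamma)-x(\Gamma_0)\bigr)-K^\alpha\bigl(x(\Gamma)-x(\Gamma_1)\bigr)$.

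Next I would check that $P,Q\in C^1\bigl((\Gamma_0,\Gamma_1)\bigr)$ and compute their derivatives. On the open interval the chord arc condition keeps $x(\Gamma)-x(\Gamma_j)$ away from the origin, so $Q$ is smooth and $Q'(\Gamma)=\bigl(\nabla K^\alpha(x(\Gamma)-x(\Gamma_0))-\nabla K^\alpha(x(\Gamma)-x(\Gamma_1))\bigr)x_\Gamma(\Gamma)$. For $P$, the integrand is $O\bigl(|\Gamma-\Gamma'|(1+|\log|\Gamma-\Gamma'||)\bigr)$ and, since $x\in C^{2,\beta}$, its formal $\Gamma$-derivative $\sum_{i,j}\partial_{x_i}\partial_{x_j}K^\alpha\bigl(x(\Gamma)-x(\Gamma')\bigr)x_{i\Gamma}(\Gamma)\bigl(x_{j\Gamma}(\Gamma)-x_{j\Gamma}(\Gamma')\bigr)+\nabla K^\alpha\bigl(x(\Gamma)-x(\Gamma')\bigr)x_{\Gamma\Gamma}(\Gamma)$ is dominated, locally uniformly in $\Gamma$, by $C(1+|\log|\Gamma-\Gamma'||)\in L^1$ — the dangerous factor $|D^2K^\alpha|=O(|\Gamma-\Gamma'|^{-1})$ being absorbed by $|x_\Gamma(\Gamma)-x_\Gamma(\Gamma')|=O(|\Gamma-\Gamma'|)$. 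Hence differentiation under the integral is justified, and $P'$ already contains the term $\int_{\Gamma_0}^{\Gamma_1}\nabla K^\alpha(x(\Gamma)-x(\Gamma'))x_{\Gamma\Gamma}(\Gamma)\,d\Gamma'$ from the claimed formula.

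It then remains to assemble $\frac{d^2u}{d\Gamma^2}(x(\Gamma))=P'+Q'$ into the stated identity, i.e.\ to show $\mathrm{p.v.}\!\int_{\Gamma_0}^{\Gamma_1}\!\sum_{i,j}\partial_{x_i}\partial_{x_j}K^\alpha(x(\Gamma)-x(\Gamma'))x_{i\Gamma}(\Gamma)x_{j\Gamma}(\Gamma)\,d\Gamma'$ equals $\int_{\Gamma_0}^{\Gamma_1}\!\sum_{i,j}\partial_{x_i}\partial_{x_j}K^\alpha(x(\Gamma)-x(\Gamma'))x_{i\Gamma}(\Gamma)\bigl(x_{j\Gamma}(\Gamma)-x_{j\Gamma}(\Gamma')\bigr)d\Gamma'+Q'(\Gamma)$, which in the process proves that the principal value exists. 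With $\mathrm{p.v.}=\lim_{\varepsilon\to0}\int_{|\Gamma-\Gamma'|>\varepsilon}$, I would add and subtract $x_{j\Gamma}(\Gamma')$: the ``difference'' part converges absolutely (bounded integrand, as above) to the first integral on the right, and the remaining part equals $-x_{i\Gamma}(\Gamma)\int_{|\Gamma-\Gamma'|>\varepsilon}\frac{d}{d\Gamma'}\partial_{x_i}K^\alpha(x(\Gamma)-x(\Gamma'))\,d\Gamma'$ (summed over $i$), which by the fundamental theorem of calculus on $(\Gamma_0,\Gamma-\varepsilon)\cup(\Gamma+\varepsilon,\Gamma_1)$ is a sum of outer and inner boundary values. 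The two \emph{inner} values $\partial_{x_i}K^\alpha(x(\Gamma)-x(\Gamma\mp\varepsilon))$ cancel as $\varepsilon\to0$ because $\nabla K^\alpha$ is an even function while, by $C^2$ smoothness, $x(\Gamma)-x(\Gamma\mp\varepsilon)=\pm x_\Gamma(\Gamma)\varepsilon+O(\varepsilon^2)$ and $|D^2K^\alpha|=O(\varepsilon^{-1})$ at that scale, so their difference is $O(\varepsilon)$; what survives is precisely the outer values, namely $Q'(\Gamma)$. This closes the identity.

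The hard part is this last, singular-integral step: proving simultaneously that the principal value exists and that it equals the outer boundary contribution. It relies on the two structural hypotheses — the chord arc condition $|x|_\ast>0$, which converts one-dimensional estimates in $|\Gamma-\Gamma'|$ into estimates on $|x(\Gamma)-x(\Gamma')|$ and forces the curve to be simple, and the $C^{2,\beta}$ (in fact $C^2$) regularity, which controls the deviation of the chord $x(\Gamma)-x(\Gamma')$ from the tangent $x_\Gamma(\Gamma)(\Gamma-\Gamma')$, cf.\ \eqref{eq:HolderEstimate}; the latter is what tames $P'$ and creates the near-diagonal cancellation. I would finally note that the argument actually yields $u\in C^2\bigl((\Gamma_0,\Gamma_1)\bigr)$ with the identity holding pointwise, so the distributional statement follows a fortiori; equivalently one may pair with $\varphi\in C_c^\infty\bigl((\Gamma_0,\Gamma_1)\bigr)$, integrate by parts in $\Gamma$, and run the same cancellation, sidestepping the absolute-continuity remarks.
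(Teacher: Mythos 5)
Your proof is correct, but it takes a genuinely different route from the paper's. The paper works entirely at the distributional level: it pairs $\frac{du}{d\Gamma}$ with $\frac{d\varphi}{d\Gamma}$, truncates to $\frac{|\Gamma-\Gamma'|}{\alpha}>\varepsilon$, and integrates by parts in the \emph{outer} variable $\Gamma$ against the test function; the resulting boundary terms at $\Gamma=\Gamma'\pm\varepsilon\alpha$ are shown to vanish (the terms $A=I_1+I_2+I_3$), and the interior term $B$ produces the stated kernel by the product rule. You instead desingularize the first derivative \emph{before} differentiating again: writing $x_\Gamma(\Gamma)=\bigl(x_\Gamma(\Gamma)-x_\Gamma(\Gamma')\bigr)+x_\Gamma(\Gamma')$ and integrating the second piece exactly in the \emph{inner} variable $\Gamma'$ gives $\frac{du}{d\Gamma}=P+Q$, with $P$ carrying a kernel that stays absolutely integrable after one more differentiation and $Q$ an explicit endpoint contribution; you then differentiate pointwise and reconstitute the principal value by the reverse add-and-subtract. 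The two arguments hinge on the same cancellation: $\nabla K^\alpha$ is even (clear from the paper's decomposition of $\nabla K^\alpha(y)$ into $\sigma(y)$, which is even, and radial factors), while $x(\Gamma)-x(\Gamma\mp\varepsilon)=\pm\varepsilon\, x_\Gamma(\Gamma)+O(\varepsilon^{2})$ and $|D^2K^\alpha|=O(\varepsilon^{-1})$ at that scale, so the two near-diagonal boundary values of $\nabla K^\alpha$ agree up to $O(\varepsilon)$ --- this is precisely the paper's estimate of its term $I_2$. What your route buys: the identity holds pointwise on $\left(\Gamma_0,\Gamma_1\right)$, the existence of the principal value is proved constructively, and the endpoint contribution $Q'(\Gamma)=\bigl(\nabla K^\alpha(x(\Gamma)-x(\Gamma_0))-\nabla K^\alpha(x(\Gamma)-x(\Gamma_1))\bigr)x_\Gamma(\Gamma)$ is made explicit; the distributional statement then follows a fortiori. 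Two small points deserve a line each if you write this up: the differentiation of $P$ under the integral sign should go through difference quotients (the exceptional set $\{\Gamma'=\Gamma\}$ moves with $\Gamma$, so dominate the averaged derivative by a fixed $L^1$ majorant of the form $C(1+|\log|\Gamma-\Gamma'||)$); and $Q'$, hence the principal-value integral, blows up logarithmically as $\Gamma\to\Gamma_0,\Gamma_1$, which is harmless for the identity on the open interval and disappears altogether in the closed-curve setting of Step 3 where the lemma is actually used.
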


\begin{proof}
By the definition of the distribution derivative, for all $\varphi \in
C_{c}^{\infty }\left( \left( \Gamma _{0},\Gamma _{1}\right) ;\mathbb{R}%
^{2}\right) $
\begin{align*}
\left\langle \frac{d^{2}u}{d\Gamma ^{2}}\left( x\left( \Gamma \right)
\right) ,\varphi \left( \Gamma \right) \right\rangle & =-\left\langle \frac{%
du}{d\Gamma }\left( x\left( \Gamma \right) \right) ,\frac{d\varphi }{d\Gamma
}\left( \Gamma \right) \right\rangle \\
& =-\left\langle \int_{\Gamma _{0}}^{\Gamma _{1}}\nabla K^{\alpha }\left(
x\left( \Gamma \right) -x\left( \Gamma ^{\prime }\right) \right) \frac{dx}{%
d\Gamma }\left( \Gamma \right) d\Gamma ^{\prime },\frac{d\varphi }{d\Gamma }%
\left( \Gamma \right) \right\rangle \\
& =-\lim_{\varepsilon \rightarrow 0}\int_{\Gamma _{0}}^{\Gamma
_{1}}\int_{\left( \Gamma _{0},\Gamma _{1}\right) \cap \frac{\left\vert
\Gamma -\Gamma ^{\prime }\right\vert }{\alpha }>\varepsilon }\frac{d\varphi
}{d\Gamma }\left( \Gamma \right) \nabla K^{\alpha }\left( x\left( \Gamma
\right) -x\left( \Gamma ^{\prime }\right) \right) \frac{dx}{d\Gamma }\left(
\Gamma \right) d\Gamma d\Gamma ^{\prime }
\end{align*}%
where for a fixed $\Gamma $ we take $\varepsilon <\min \left\{ \frac{\Gamma
-\Gamma _{0}}{\alpha },\frac{\Gamma _{1}-\Gamma }{\alpha }\right\} $. Denote
$D=\left( \Gamma _{0},\Gamma _{1}\right) \cap \frac{\left\vert \Gamma
-\Gamma ^{\prime }\right\vert }{\alpha }>\varepsilon $, by integration by
parts we get%
\begin{align*}
& =\lim_{\varepsilon \rightarrow 0}\int_{\Gamma _{0}}^{\Gamma _{1}}\left( -
\left[ \varphi \left( \Gamma \right) \nabla K^{\alpha }\left( x\left( \Gamma
\right) -x\left( \Gamma ^{\prime }\right) \right) \frac{dx}{d\Gamma }\left(
\Gamma \right) \right] _{\partial D}+\int_{D}\varphi \left( \Gamma \right)
\frac{d}{d\Gamma }\left[ \nabla K^{\alpha }\left( x\left( \Gamma \right)
-x\left( \Gamma ^{\prime }\right) \right) \frac{dx}{d\Gamma }\left( \Gamma
\right) \right] d\Gamma \right) d\Gamma ^{\prime } \\
& =\lim_{\varepsilon \rightarrow 0}\int_{\Gamma _{0}}^{\Gamma _{1}}\left(
A+B\right) d\Gamma ^{\prime }
\end{align*}%
For $A$ we have%
\begin{align*}
A& =-\varphi \left( \Gamma ^{\prime }-\varepsilon \alpha \right) \nabla
K^{\alpha }\left( x\left( \Gamma ^{\prime }-\varepsilon \alpha \right)
-x\left( \Gamma ^{\prime }\right) \right) \frac{dx}{d\Gamma }\left( \Gamma
^{\prime }-\varepsilon \alpha \right) \\
& +\varphi \left( \Gamma ^{\prime }+\varepsilon \alpha \right) \nabla
K^{\alpha }\left( x\left( \Gamma ^{\prime }+\varepsilon \alpha \right)
-x\left( \Gamma ^{\prime }\right) \right) \frac{dx}{d\Gamma }\left( \Gamma
^{\prime }+\varepsilon \alpha \right) \\
& =I_{1}+I_{2}+I_{3},
\end{align*}%
where%
\begin{align*}
I_{1}& =\left[ \varphi \left( \Gamma ^{\prime }+\varepsilon \alpha \right)
-\varphi \left( \Gamma ^{\prime }-\varepsilon \alpha \right) \right] \nabla
K^{\alpha }\left( x\left( \Gamma ^{\prime }+\varepsilon \alpha \right)
-x\left( \Gamma ^{\prime }\right) \right) \frac{dx}{d\Gamma }\left( \Gamma
^{\prime }+\varepsilon \alpha \right) , \\
I_{2}& =\varphi \left( \Gamma ^{\prime }-\varepsilon \alpha \right) \left[
\nabla K^{\alpha }\left( x\left( \Gamma ^{\prime }+\varepsilon \alpha
\right) -x\left( \Gamma ^{\prime }\right) \right) -\nabla K^{\alpha }\left(
x\left( \Gamma ^{\prime }-\varepsilon \alpha \right) -x\left( \Gamma
^{\prime }\right) \right) \right] \frac{dx}{d\Gamma }\left( \Gamma ^{\prime
}+\varepsilon \alpha \right) , \\
I_{3}& =\varphi \left( \Gamma ^{\prime }-\varepsilon \alpha \right) \nabla
K^{\alpha }\left( x\left( \Gamma ^{\prime }-\varepsilon \alpha \right)
-x\left( \Gamma ^{\prime }\right) \right) \left[ \frac{dx}{d\Gamma }\left(
\Gamma ^{\prime }+\varepsilon \alpha \right) -\frac{dx}{d\Gamma }\left(
\Gamma ^{\prime }-\varepsilon \alpha \right) \right] .
\end{align*}%
Now, since for $y\in \mathbb{R}^{2},\frac{\left\vert y\right\vert }{\alpha }%
\rightarrow 0:\left\vert \nabla K^{\alpha }\left( y\right) \right\vert \leq -%
\frac{1}{2\pi }\frac{1}{\alpha ^{2}}\log \frac{\left\vert y\right\vert }{%
\alpha }+O\left( \frac{1}{\alpha ^{2}}\right) $ and $\left\vert \frac{dx}{%
d\Gamma }\right\vert \varepsilon \geq \frac{\left\vert x\left( \Gamma
^{\prime }+\varepsilon \alpha \right) -x\left( \Gamma ^{\prime }\right)
\right\vert }{\alpha }\geq \left\vert x\right\vert _{\ast }\varepsilon $ we
have
\begin{align*}
\left\vert \nabla K^{\alpha }\left( x\left( \Gamma ^{\prime }+\varepsilon
\alpha \right) -x\left( \Gamma ^{\prime }\right) \right) \right\vert & \leq -%
\frac{1}{2\pi }\frac{1}{\alpha ^{2}}\log \frac{\left\vert x\left( \Gamma
^{\prime }+\varepsilon \alpha \right) -x\left( \Gamma ^{\prime }\right)
\right\vert }{\alpha }+\frac{C}{\alpha ^{2}} \\
& \leq C\left( \left\vert x\right\vert _{\ast },\frac{1}{\alpha }\right)
\left( \log \varepsilon +1\right) ,
\end{align*}%
Hence%
\begin{equation*}
\left\vert I_{1}\right\vert \leq C\left( \left\vert x\right\vert _{\ast },%
\frac{1}{\alpha }\right) \left\Vert \frac{d\varphi }{d\Gamma }\right\Vert
_{C^{0}}\left\Vert \frac{dx}{d\Gamma }\right\Vert _{C^{0}}\varepsilon \left(
\log \varepsilon +1\right) \rightarrow 0\text{, as }\varepsilon \rightarrow 0%
\text{.}
\end{equation*}%
Similarly,%
\begin{equation*}
\left\vert I_{3}\right\vert \leq C\left( \left\vert x\right\vert _{\ast },%
\frac{1}{\alpha }\right) \left\Vert \varphi \right\Vert _{C^{0}}\left\Vert
\frac{d^{2}x}{d\Gamma ^{2}}\right\Vert _{C^{0}}\varepsilon \left( \log
\varepsilon +1\right) \rightarrow 0\text{, as }\varepsilon \rightarrow 0%
\text{.}
\end{equation*}%
For $I_{2}$ we have%
\begin{equation*}
\left\vert I_{2}\right\vert \leq \left\Vert \varphi \right\Vert
_{C^{0}}\left\Vert \frac{dx}{d\Gamma }\right\Vert _{C^{0}}\left\vert \nabla
K^{\alpha }\left( x\left( \Gamma ^{\prime }+\varepsilon \alpha \right)
-x\left( \Gamma ^{\prime }\right) \right) -\nabla K^{\alpha }\left( x\left(
\Gamma ^{\prime }-\varepsilon \alpha \right) -x\left( \Gamma ^{\prime
}\right) \right) \right\vert ,
\end{equation*}%
using that for $y\in \mathbb{R}^{2}$
\begin{equation*}
\nabla K^{\alpha }\left( y\right) =\frac{1}{\left\vert y\right\vert }D\Psi
^{\alpha }\left( \left\vert y\right\vert \right) \left( \sigma \left(
y\right) +J\right) -\sigma \left( y\right) D^{2}\Psi ^{\alpha }\left(
\left\vert y\right\vert \right) ,\quad
\end{equation*}%
where
\begin{equation*}
\sigma \left( y\right) =\frac{1}{\left\vert y\right\vert ^{2}}%
\begin{pmatrix}
y_{1}y_{2} & y_{2}^{2} \\
-y_{1}^{2} & -y_{1}y_{2}%
\end{pmatrix}%
,J=%
\begin{pmatrix}
0 & -1 \\
1 & 0%
\end{pmatrix}%
,
\end{equation*}%
for $\frac{\left\vert y\right\vert }{\alpha },\frac{\left\vert y^{\prime
}\right\vert }{\alpha }\rightarrow 0$, we obtain
\begin{align*}
\left\vert \nabla K^{\alpha }\left( y\right) -\nabla K^{\alpha }\left(
y^{\prime }\right) \right\vert & \leq \frac{1}{4\pi }\frac{1}{\alpha ^{2}}%
\left\vert \sigma \left( y\right) -\sigma \left( y^{\prime }\right)
\right\vert +C\frac{\left\vert y\right\vert ^{2}}{\alpha ^{4}}\left\vert
\log \frac{\left\vert y\right\vert }{\alpha }\right\vert +C\frac{\left\vert
y^{\prime }\right\vert ^{2}}{\alpha ^{4}}\left\vert \log \frac{\left\vert
y^{\prime }\right\vert }{\alpha }\right\vert \\
& +\frac{1}{4\pi }\frac{1}{\alpha ^{2}}\left\vert \log \left\vert
y\right\vert -\log \left\vert y^{\prime }\right\vert \right\vert .
\end{align*}%
Now, due to
\begin{align*}
& \left\vert \log \left\vert x\left( \Gamma ^{\prime }+\varepsilon \alpha
\right) -x\left( \Gamma ^{\prime }\right) \right\vert -\log \left\vert
x\left( \Gamma ^{\prime }-\varepsilon \alpha \right) -x\left( \Gamma
^{\prime }\right) \right\vert \right\vert \leq C\left( \left\Vert
x\right\Vert _{1,\beta },\frac{1}{\left\vert x\right\vert _{\ast }}\right)
\alpha ^{\beta }\varepsilon ^{\beta }, \\
& \left\vert \sigma \left( x\left( \Gamma ^{\prime }+\varepsilon \alpha
\right) -x\left( \Gamma ^{\prime }\right) \right) -\sigma \left( x\left(
\Gamma ^{\prime }-\varepsilon \alpha \right) -x\left( \Gamma ^{\prime
}\right) \right) \right\vert \leq C\left( \left\Vert x\right\Vert _{1,\beta
},\frac{1}{\left\vert x\right\vert _{\ast }}\right) \alpha ^{\beta
}\varepsilon ^{\beta },
\end{align*}%
we obtain%
\begin{equation*}
\left\vert \nabla _{x}K^{\alpha }\left( x\left( \Gamma ^{\prime
}+\varepsilon \alpha \right) -x\left( \Gamma ^{\prime }\right) \right)
-\nabla _{x}K^{\alpha }\left( x\left( \Gamma ^{\prime }-\varepsilon \alpha
\right) -x\left( \Gamma ^{\prime }\right) \right) \right\vert \rightarrow 0%
\text{, as }\varepsilon \rightarrow 0\text{.}
\end{equation*}%
We also have that
\begin{align*}
\vert \nabla _{x}K^{\alpha }\left( x\left( \Gamma ^{\prime
}+\varepsilon \alpha \right) -x\left( \Gamma ^{\prime }\right)
\right) &-\nabla _{x}K^{\alpha }\left( x\left( \Gamma ^{\prime
}-\varepsilon \alpha
\right) -x\left( \Gamma ^{\prime }\right) \right) \vert   \\
& \leq \frac{1%
}{2\pi }\frac{1}{\alpha ^{2}}+\frac{C}{\alpha ^{2}}\left\Vert \frac{dx}{%
d\Gamma }\right\Vert _{C^{0}}^{2}\varepsilon ^{2}\left\vert \log \left\Vert
\frac{dx}{d\Gamma }\right\Vert _{C^{0}}\varepsilon \right\vert +\frac{C}{%
\alpha ^{2}}\frac{\alpha ^{\beta }\varepsilon ^{\beta }\left\vert \frac{dx}{%
d\Gamma }\right\vert _{\beta }}{\left\vert x\right\vert _{\ast }} \\
& \leq C\left( \frac{1}{\alpha },\left\Vert \frac{dx}{d\Gamma }\right\Vert
_{1,\beta },\left\vert x\right\vert _{\ast }\right) ,
\end{align*}%
hence by the Lebesgue's dominated convergence theorem
\begin{equation*}
\lim_{\varepsilon \rightarrow 0}\int_{\Gamma _{0}}^{\Gamma _{1}}Ad\Gamma
^{\prime }=0.
\end{equation*}%
For $B$ we have%
\begin{align*}
\frac{d}{d\Gamma }\left[ \nabla K^{\alpha }\left( x\left( \Gamma \right)
-x\left( \Gamma ^{\prime }\right) \right) \frac{dx}{d\Gamma }\left( \Gamma
\right) \right] & =\nabla K^{\alpha }\left( x\left( \Gamma \right) -x\left(
\Gamma ^{\prime }\right) \right) \frac{d^{2}x}{d\Gamma ^{2}}\left( \Gamma
\right) \\
& +\sum_{i,j}\partial _{x_{i}}\partial _{x_{j}}K^{\alpha }\left( x\left(
\Gamma \right) -x\left( \Gamma ^{\prime }\right) \right) \frac{dx_{i}}{%
d\Gamma }\left( \Gamma \right) \frac{dx_{j}}{d\Gamma }\left( \Gamma \right) .
\end{align*}%
Hence%
\begin{align*}
& \left\langle \frac{d^{2}u}{d\Gamma ^{2}}\left( x\left( \Gamma ,t\right)
,t\right) ,\varphi \left( \Gamma \right) \right\rangle =\int_{\Gamma
_{0}}^{\Gamma _{1}}d\Gamma \varphi \left( \Gamma \right) \\
& \cdot \left( \int_{\Gamma _{0}}^{\Gamma _{1}}\nabla K^{\alpha }\left(
x\left( \Gamma \right) -x\left( \Gamma ^{\prime }\right) \right) \frac{d^{2}x%
}{d\Gamma ^{2}}\left( \Gamma \right) d\Gamma ^{\prime }+\mathrm{p.v.}%
\int_{\Gamma _{0}}^{\Gamma _{1}}\sum_{i,j}\partial _{x_{i}}\partial
_{x_{j}}K^{\alpha }\left( x\left( \Gamma \right) -x\left( \Gamma ^{\prime
}\right) \right) \frac{dx_{i}}{d\Gamma }\left( \Gamma \right) \frac{dx_{j}}{%
d\Gamma }\left( \Gamma \right) d\Gamma ^{\prime }\right) ,
\end{align*}%
which concludes the proof.
\end{proof}

\section*{Acknowledgements}

The authors would like to thank the anonymous referee for a careful reading
of the manuscript and for valuable comments. C.B. would like to thank the
Faculty of Mathematics and Computer Science at the Weizmann Institute of
Science for the kind hospitality where this work was initiated. This work
was supported in part by the BSF grant no.~2004271, the ISF grant
no.~120/06, and the NSF grants no.~DMS-0504619 and no.~DMS-0708832.


\bibliographystyle{siam}
\bibliography{VortexSheetBib}

\end{document}